\documentclass[10pt,leqno]{amsart}
\usepackage{graphicx}
\usepackage{indentfirst,csquotes}

\usepackage{amssymb,amsthm,amsmath}
\numberwithin{equation}{section}
\usepackage{xcolor,paralist,hyperref,titlesec,fancyhdr,etoolbox}
\newtheorem{thm}{Theorem}[section]

\newtheorem{lem}{Lemma}[section]

\newtheorem{rem}{Remark}[section]

\titleformat{\section}[hang]{\normalfont\large\bfseries}{\thesection. }{0pt}{}
\titlespacing*{\section}{0pt}{2ex}{2ex}

\titleformat{\subsection}[hang]{\normalsize\bfseries}{\thesubsection. }{0pt}{}
\titlespacing*{\section}{0pt}{2ex}{2ex}

\hypersetup{ colorlinks=true, linkcolor=black, filecolor=black, urlcolor=black }

\usepackage{lipsum}

\begin{document}
	\title{Global solutions of compressible Navier-Stokes equations with small viscosity} 

\author{Lv Cai}
\address{ Department of Mathematics, Shanghai University, Shanghai 200444, China}
\email{cailv@shu.edu.cn}

\author{Ning-An Lai}
\address{Department of Mathematics, Shaoxing University, Shaoxing 312000, China}
\email{ninganlai@usx.edu.cn(N.-A.Lai)}



\author{Zexian Zhang}
\address{School of Mathematical Sciences, Fudan University, Shanghai 200433, China}
\email{23110840019@m.fudan.edu.cn(Z. X. Zhang)}
\author{Yi Zhou}
\address{School of Mathematical Sciences, Fudan University, Shanghai 200433, China}
\email{yizhou@fudan.edu.cn(Y. Zhou)}

	\date{\today}
	\maketitle
	
	\let\thefootnote\relax
	\footnotetext{MSC2020: 76N06, 35Q30, 76N10} 

\footnotetext{Keywords: Compressible Navier-Stokes equations, global existence, Morawetz inequality, weighted trace inequality}

\begin{abstract}

In this paper, we study the Cauchy problem for the compressible Navier-Stokes system in $\mathbb{R}^3$. Suppose that the viscosity coefficients satisfy $0<\max\{\mu, \nu=\lambda+2\mu\}<1$, and set $\varepsilon=\min\{\mu, \nu=\lambda+2\mu\}$. We establish the global existence of classical solutions when the initial perturbations of the density and the curl-free part of the velocity are smaller than $\varepsilon^{\frac12+}$ (up to a logarithmic loss), while the divergence part of the initial velocity is smaller than $\varepsilon$. This improves the classical global existence result of Matsumura-Nishida \cite{MaN80}, which requires all the initial data to be smaller than $\varepsilon (<1)$. We expect that this result is representative of general Shizuta-Kawashima systems arising in physical applications. 
The improvement of the index from $1$ to $\frac12+$ relies on exploiting the hidden Kawashima-type dissipation for the density and controlling the spacetime trace norm of the solution at the scale $\sqrt{\varepsilon}$. These two ingredients are obtained through a weighted trace inequality and a Morawetz-type inequality for the perturbed sound speed and the divergence of the velocity.

\end{abstract}

	\section{Introduction}
In this paper, we consider the Cauchy problem of 3D compressible Navier-tokes equations with initial data close to a constant equilibrium
\begin{equation}\label{eq1}
	\begin{aligned}
		\begin{split}
			\left\lbrace
			\begin{array}{lr}
		\partial_t \rho + \operatorname{div}(\rho u) = 0,~~~~~~~in~(0, \infty)\times \mathbb{R}^3, \\
		\rho \bigl( \partial_t u + (u \cdot \nabla) u \bigr) + \nabla p(\rho) = \mu \Delta u + (\lambda + \mu) \nabla (\operatorname{div} u),~~in~(0, \infty)\times \mathbb{R}^3, \\
		(\rho,u)|_{t=0} = (\rho_0,u_0),~~~~~~~~in~ \mathbb{R}^3,
			\end{array}	
\right.
\end{split}	
\end{aligned}
\end{equation}
where $\rho$ is the density of the fluid, $u$ is the velocity vector, and $p(\rho)$ represents the pressure as a function of $\rho$. The constants $\lambda, \mu$ are defined as two Lam\'{e} coefficients of the fluid, which are assumed to satisfy $\mu > 0, \nu=\lambda + 2\mu > 0$. Such conditions ensure the ellipticity of the
operator $\mu\Delta+(\lambda+\mu)\nabla div$ and are satisfied in the physical cases.

The foundational work for the global existence of classical solution with small initial data for system \eqref{eq1} is due to Matsumura-Nishida \cite{MaN80}, which is the starting point of the global theory with small perturbation for the compressible Navier-Stokes system. Also, the Matsumura-Nishida energy method was developed, we refer to the series of works of the same authors for the corresponding initial boundary value problem \cite{MaN81, MaN83, MaN831}. This kind of global theory was generalized by Kawashima \cite{Kaw83} (the seminal work on symmetric hyperbolic
partially diffusive systems), Shizuta-Kawashima \cite{ShK85} and Kawashima-Shizuta \cite{KaS88} systematically to the general symmetric hyperbolic-parabolic systems. These works come up with a well-known Kawashima-Shizuta condition, which exhibits a sufficient condition for global well-posedness and essentially says that propagation of hyperbolicity and partial dissipation will lead to dissipation for the whole system. It is interesting to see that the Kawashima-Shizuta condition works for almost all the dissipative or diffusive models, like compressible Navier-Stokes equations, compressible magnetohydrodynamics, compressible Euler with damping and so on. The requirements in the thesis \cite{Kaw83} was justified by Serre \cite{Ser10}, and the same author \cite{Ser101} improved the results in \cite{KaS88}.

Noting that if $(\rho, u)$ is a solution to the system \eqref{eq1} with initial data $(\rho_0, u_0)$, then the rescaled pair
\[
\left(\rho(l^2t, lx), lu(l^2t, lx)\right)
\]
is also a solution for all $l>0$, with the rescaled initial data $(\rho_0(lx), lu_0(lx))$, if we replace the pressure function by $l^2 p$. This would lead the concepts of scaling invariance
and critical spaces for system \eqref{eq1} in $\mathbb{R}^d$: system \eqref{eq1} is invariant under the above scaling transformation, and a function space $F\subset \mathcal{S}'\times (\mathcal{S}')^d$ is critical if its associated norm is invariant under the transformation
\[
(\rho, u)\rightarrow \left(\rho(l\cdot), lu(l\cdot)\right).
\]
Obviously, the Besov space $\dot{B}^{\frac dp}_{p, 1}\times \left(\dot{B}^{\frac dp-1}_{p, 1}\right)^d$ is a critical space in this sense. Danchin \cite{Dan00} first established the global existence result for \eqref{eq1} with small initial perturbations $(\rho_0-\overline{\rho}, u_0)$ in the critical Besov space $\dot{B}^{\frac d2}_{2, 1}\times \left(\dot{B}^{\frac d2-1}_{2, 1}\right)^d$ for $d\ge 2$. The same author restudied this problem in \cite{Dan16}, with the high-frequency part of the norm
generalized to $p$ close to $2$. For the global existence with small initial data in Besov and critical weighted Besov spaces in $\mathbb{R}^d (d\ge 3)$, we refer to \cite{Oha23}.

Let $\varepsilon$ be the small viscosity coefficient in the classical Kawashima system, or in MHD, or in the incompressible viscoelasticity, or the mean free path in the Boltzmann equation. In our case for the compressible Navier-Stokes system \eqref{eq1}, we set $\varepsilon=\min\{\mu, \nu=\lambda+2\mu\}$, and assume that $0<\max\{\mu, \nu=\lambda+2\mu\}<1$.  If the size of the initial perturbations are assumed to be $\varepsilon$ to the power $\theta$, thus, $\varepsilon^\theta$, then we call $\theta$ the index of initial data. In the classical global existence theory with small data, all the existence results mentioned above are of index $1$, that is, the initial perturbation of the velocity $u_0$ and density $\rho_0$ satisfy
\begin{equation} \label{classical}
	\begin{aligned}
		\| u_0, \rho_0 - \bar{\rho} \|_{F} \lesssim  \varepsilon,
	\end{aligned}
\end{equation}
where $\|\cdot\|_{F}$ denotes the norm associated with the functional space $F$. Take the incompressible Navier-Stokes system 
\begin{equation}\label{inNS}
u_t+u\cdot\nabla u+\nabla p-\varepsilon \Delta u=0
\end{equation}
for example, it admits a critical space $F$, the norm of which stays invariant under the scaling transformation 
\[
u_\lambda(t, x)=\lambda u(\lambda^2t, \lambda x).
\]
By employing the classical energy method, we want to absorb/control the nonlinear term by the dissipative term $\varepsilon \Delta u$, and this requires that the (initial) energy should be less than $\varepsilon$.  The most important critical spaces for the system \eqref{inNS} in $\mathbb{R}^3$ are
\[
H^{\frac12}\subset L^3\subset \dot{B}^{-1+\frac 3p}_{p, q}\subset BMO^{-1},
\]
and the global existence results with small initial data $(\|u_0\|_F\lesssim \varepsilon)$ are referred to Fujita-Kato \cite{FuK64}, Kato \cite{Kato84}, Cannone \cite{Can95, Can04} and Koch-Tataru\cite{Koch01} respectively.

A natural question is that whether one can decrease the index $\theta$ to be below $1$, it admits significant importance in the vanishing viscosity limit/
low Mach number limit/hydrodynamic limit theory. Such consideration can be dated back to Klainerman-Majda \cite{KlM81}, since then the relationship between the initial data size and viscosity (small parameter) size attracts more and more attention, see \cite{Fuj24, JaJ14, JaN06, Liu14} and references therein. 
However, it may not always possible to take $\theta$ below $1$, for example, for the 3D incompressible Navier-Stokes equation, this seems no way at least so far. This is because if one could succeed in achieving this goal, then the natural scaling of the Navier-Stokes system will lead to large data global existence, and hence the Millennium Problem for the incompressible Navier-Stokes equations is solved.

Another important and closely related problem is the stability and transition to turbulence of the laminar flows at high Reynolds number $Re$, where the parameter $\varepsilon$ is corresponding to the small viscosity coefficient $\nu=Re^{-1}>0$. Trefethen-Trefethen-Reddy-Driscoll \cite{Tre93} first proposed a question: Given a norm $\|\cdot\|_{X}$, find a $\theta=\theta (X)$ such that
\[
 \begin{aligned}
&\|u_0\|_{X}\le \nu^{\theta}\Rightarrow~~ stability,\\
&\|u_0\|_{X}\gg \nu^{\theta}\Rightarrow~~ instability,\\
 \end{aligned}
\]
where the index $\theta$ is referred to as the transition threshold in the applied literature, and it was conjectured in \cite{Tre93} that $\theta\le 1$. Bedrossian-Germain-Masmoudi \cite{Bed17} studied the stability threshold problem for the $3$-D Couette flow in $\mathbb{T}\times \mathbb{R}\times \mathbb{T}$, and proved that if the initial perturbation $u_0$ is in Sobolev space satisfying
\[
\|u_0\|_{H^{\sigma}}\le \delta \nu^{\frac 32},~~~for~\sigma>\frac 92,
\]
then the solution is global in time. This result was improved by Wei-Zhang \cite{Wei21}, in  which the initial perturbation satisfies
\[
\|u_0\|_{H^2}\le c_0\nu
\]
instead, where $c_0>0$ is some constant and $0<\nu<1$. The enhanced dissipation due to the Couette mixing and the inviscid damping play a crucial role in studying the stability threshold. Unfortunately, it seems hard to improve the index to be below $1$ for the $3$D Couette flow due to the lift-up effect. However, it is possible for the Couette flow for the $3$D
incompressible Navier-Stokes-Coriolis system in the high Reynolds number regime. In a recent work by Li-Sun-Wang-Wei-Zhang \cite{Li25}, they explored the rotation dispersion to suppress the lift-up effect and 
proved that if the initial perturbation satisfies
\[
\|u_0\|_{\widetilde{H}(\mathbb{T}\times \mathbb{D})}\le \epsilon_0\nu^{\theta},
\]
where 
\[
 \begin{aligned}
&\theta>\frac 23,~~\widetilde{H}=H^6\cap W^{3, 1},~~~for~\mathbb{D}=\mathbb{R}^2,\\
&\theta\ge\frac 56,~~\widetilde{H}=H^6,~~~for~\mathbb{D}=\mathbb{R}\times \mathbb{T},\\
 \end{aligned}
\]
then the corresponding solution of the Navier-Stokes-Coriolis system exists globally
in time and remains asymptotically close to the Couette flow. 

 Assuming the viscosity is relatively small, i.e., \( \varepsilon=\min(\mu,\nu) <1\), we are going to improve the classical global existence result for system \eqref{eq1}  by establishing some sharper estimates for the solution if the initial perturbations satisfy
\begin{equation}\label{initial data}
	\begin{aligned}
		\begin{split}
	\left\lbrace
	\begin{array}{lr}
		\left\| \Gamma^{\leq N} (\rho_0 - \bar{\rho}, (u_{0})_{cf})  \right\|_{L_x^2} \lesssim \varepsilon^{\frac{1}{2}} ( \ln \varepsilon^{-1} )^{-1} , \\
\left\| \Gamma^{\leq N} (u_{0})_{df} \right\|_{L_x^2}  \lesssim \varepsilon,
			\end{array}	
\right.
\end{split}	
\end{aligned}
\end{equation}
where $\Gamma = \{ \partial_x,\tilde{\Omega} \} $, $\tilde{\Omega}=\Omega_{ij} I + e_i \otimes e_j - e_j \otimes e_i$, $\Omega_{ij}=x_j\partial_{x_i}-x_i\partial_{x_j}, i, j=1, 2, 3$ denotes the rotational vector field. And
\[u_{cf} = \mathbb{P}_{cf} u = \Delta^{-1} \nabla(\operatorname{div} u), u_{df} = \mathbb{P}_{df} u = -  \Delta^{-1} \nabla \times (\nabla \times u)
\]
represent the curl free part and divergence part of the solution respectively.
We shall utilize the rotational invariance of the system, which is inspired by the vector field method introduced in the 1980s by Klainerman \cite{Kal85} to analyze the
decay properties of the linear wave equation. However, the compressible Navier-Stokes system is no longer Lorentz invariant, hence our method differs from that in \cite{Kal85}. Obviously the index in $\eqref{initial data}_1$ is almost $\frac12$, with only a logarithmic loss. We believe it should be $\frac12$ exactly. Actually, we have a \textbf{general conjecture} which states as: \textit{the index $\theta$ mentioned above can be improved to belwo $1$ for almost all the Kawashima type system, including 
\begin{itemize}
\item  global existence for compressible elasticity with small viscosity near constant equilibrium;\\
\item global existence for incompressible magnetohydrodynamics with small viscosity and zero magnetic diffusivity under a nonzero magnetic background;\\
\item global existence for compressible magnetohydrodynamics under a nonzero magnetic background when both viscosity and magnetic diffusivity are of size $\varepsilon$;\\
\item global existence near global Maxwellian of Boltzman equations with small mean free path;\\
\item global existence for compressible Euler system with damping $\varepsilon \rho u$ near constant equilibrium;\\
\item global existence near constant equilibrium of $1D$ quasilinear hyperbolic system with a small damping coefficient on the inhomogenous term.\\
\end{itemize}
 Moreover, if the general conjecture above holds, what is the optimal index? Is it should be $\frac{1}{2}$?} Noting that $\theta=0$ corresponds to the vanishing viscosity theory, while $\theta=1$ corresponds to the classical global existence result, and the optimality of $\frac12$ is a compromise index. The vanishing viscosity ($\theta=0$) result for incompressible magnetohydrodynamic system can be referred to \cite{Cai18, He18, Wei17}, while the classical global existence ($\theta=1$) results for the
 incompressible magnetohydrodynamic system with small Alfv\'{e}n numbers, partially dissipative
quasilinear hyperbolic systems $1D$, the compressible Euler system with damping and
 the incompressible viscoelastic system are referred to \cite{Jiang26}, \cite{Zhou22}, \cite{Wan01, Sid03} and \cite{Lei08} respectively, and references therein. 

 The proof is based on some key ingredients. The first one is to explore the Kawashima type hidden dissipation for the disturbed sound speed $\tau$, by coupling the equation for the divergence of the velocity $\nabla\cdot u$. This would remedy the dissipation for $\tau$, which is lost when doing energy estimate for the coupled system of $\tau$ and $u$. The second one is to control the spacetime trace norm of the solution with scale $\sqrt{\varepsilon}$, by combining two crucial inequalities: a weighted trace inequality and a Morawetz type inequality.

Our main result is stated as follows.
\begin{thm}
\label{thm:main}
Consider the $3$D compressible Navier--Stokes system \eqref{eq1} around the constant equilibrium state $(\bar{\rho}, 0)=(1,0)$, associated with the state equation $p(\rho) = \frac{\rho^\gamma}{\gamma} (\gamma > 1)$. Assume that $\varepsilon=\min\{\mu, \nu=\lambda+2\mu\}$ and $0<\max\{\mu, \nu=\lambda+2\mu\}<1$.
Suppose the initial perturbations $(\rho_0-1, u_0)$ have compact support and satisfy 
\[
\rho_0 - 1 \in H^{N}(\mathbb{R}^3), \quad u_0 \in H^{N}(\mathbb{R}^3)
\]
for some sufficiently large integer $N \ge 10$, and there exists a constant $\delta > 0$ small enough but independent of $\varepsilon$ such that 
\[
\left\|\Gamma^{\leq N}(\rho_0 - 1, u_{0,cf})\right\|_{L^2_x} \leq \delta \, \varepsilon^{\frac{1}{2}} \left(\ln \varepsilon^{-1}\right)^{-1},
\]
\[
\left\|\Gamma^{\leq N} u_{0,df}\right\|_{L^2_x} \leq \delta \, \varepsilon,
\]
where 
\[
u_{0,cf} = \mathbb{P}_{cf} u_0, \quad u_{0,df} = \mathbb{P}_{df} u_0
\]
represent the curl-free and divergence-free parts of the initial velocity respectively, and $\Gamma = \{\partial_x, \tilde{\Omega}\}$ denotes the collection of spatial and rotational vector fields defined in Section 2.1. Then there exists a unique global-in-time solution $(\rho, u)$ of \eqref{eq1} such that for all $t > 0$,
\begin{equation}\label{ineq1}
\begin{aligned}
& \quad \left\|\Gamma^{\leq N}(\rho - 1, u)\right\|_{L^\infty(0,T;L^2_x)} 
+ \varepsilon^{\frac{1}{2}} \left\|\nabla \Gamma^{\leq N} u\right\|_{L^2(0,T; L^2_x)} 
+ \varepsilon^{\frac{1}{2}}\left \|\nabla \Gamma^{\leq N-1} (\rho-1)\right\|_{L^2(0,T; L^2_x)} \\
& \lesssim   \varepsilon^{\frac{1}{2}} \left(\ln \varepsilon^{-1}\right)^{-1}.
\end{aligned}
\end{equation}
Furthermore, the solution satisfies the weighted spacetime estimate 
\begin{equation}\label{ineq1a}
\begin{aligned}
\left\|\langle r \rangle^{-\frac{1}{2}} \nabla \Gamma^{\leq N_{\mathrm{lo}} } (\rho - 1, u_{cf})\right\|_{L^2(0,T; L^2_x)} 
\lesssim  \varepsilon^{\frac{1}{2}} \left(\ln \varepsilon^{-1}\right)^{-\frac{1}{2}},
\end{aligned}
\end{equation}
where $N_{\mathrm{lo}}$ is defined in \eqref{def of low hi} below.
\end{thm}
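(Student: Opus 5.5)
We argue by continuity (bootstrap). Introduce the perturbed sound speed $\tau$ (with $p'(\rho)=c^2(\rho)$, set $\tau = \frac{2}{\gamma-1}(c(\rho)-1)$, so that $(\tau,u)$ solves a symmetric hyperbolic–parabolic system whose linear part is the acoustic system $\partial_t\tau+\operatorname{div}u=-u\cdot\nabla\tau-\frac{\gamma-1}{2}\tau\operatorname{div}u$, $\partial_t u+\nabla\tau-\mu\Delta u-(\lambda+\mu)\nabla\operatorname{div}u=-u\cdot\nabla u-(\text{quasilinear terms})$). The vector fields $\Gamma=\{\partial_x,\tilde\Omega\}$ commute with the linear operator, so $\Gamma^{\le N}(\tau,u)$ satisfies the same system with commuted nonlinearities. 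Define the total energy
\[
\mathcal{E}(T)=\|\Gamma^{\le N}(\tau,u)\|_{L^\infty_TL^2}+\varepsilon^{\frac12}\|\nabla\Gamma^{\le N}u\|_{L^2_TL^2}+\varepsilon^{\frac12}\|\nabla\Gamma^{\le N-1}\tau\|_{L^2_TL^2},
\]
the divergence-free energy $\mathcal{E}_{df}(T)$ (same, for $\mathbb{P}_{df}u$, at level $\varepsilon$), and the Morawetz quantity $\mathcal{M}(T)=\|\langle r\rangle^{-\frac12}\nabla\Gamma^{\le N_{\rm lo}}(\tau,u_{cf})\|_{L^2_TL^2}$. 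We assume a priori $\mathcal{E}\le 2C\delta\varepsilon^{\frac12}(\ln\varepsilon^{-1})^{-1}$, $\mathcal{E}_{df}\le 2C\delta\varepsilon$, $\mathcal{M}\le 2C\delta\varepsilon^{\frac12}(\ln\varepsilon^{-1})^{-\frac12}$, and improve these with $C$ replaced by $C/2$. Local well-posedness and compact support (finite propagation up to the parabolic tails, which are handled by the weights) close the argument.

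\textbf{Step 1 (basic energy plus hidden dissipation).} The standard $L^2$ estimate of $\Gamma^{\le N}(\tau,u)$ gives $\mu\|\nabla u\|^2+(\lambda+\mu)\|\operatorname{div}u\|^2$ dissipation, i.e.\ at least $\varepsilon\|\nabla\Gamma^{\le N}u\|_{L^2L^2}^2$, but no dissipation for $\tau$. To recover it, we pair the $u$-equation with $\nabla\Gamma^{\le N-1}\tau$ (Kawashima/Matsumura–Nishida cross term $\int u\cdot\nabla\tau$). Using $\partial_t\tau=-\operatorname{div}u+\dots$, this yields $\|\nabla\tau\|^2_{L^2L^2}\lesssim \|\operatorname{div}u\|^2_{L^2L^2}+\nu^2\|\nabla\operatorname{div}u\|^2_{L^2L^2}+\text{bdry}+\text{NL}$. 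Multiplied by $\varepsilon$ and added to the energy, this gives the $\varepsilon^{1/2}\|\nabla\Gamma^{\le N-1}\tau\|$ term. The loss of one derivative is why only $N-1$ appears.

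\textbf{Step 2 (nonlinear terms).} For the cubic energy terms $\int \Gamma^{\alpha}(\tau,u)\,\Gamma^\beta(\tau,u)\,\nabla\Gamma^\gamma(\tau,u)$ with, say, $|\beta|\le N_{\rm lo}$, the naive bound is $\|\nabla\Gamma^{\le N_{\rm lo}}w\|_{L^2_tL^\infty}\cdot\mathcal E\cdot\|\nabla\cdot\|_{L^2L^2}\lesssim\varepsilon^{-1}\mathcal E^3$, which closes only at size $\varepsilon$. Instead we place the low-frequency factor in a weighted trace norm. Using the rotational fields, the Klainerman–Sobolev type weighted trace inequality gives
\[
\sup_{x}\langle r\rangle\,|f(x)|\lesssim \|\Gamma^{\le 2}f\|_{L^2}^{\frac12}\|\nabla\Gamma^{\le2}f\|_{L^2}^{\frac12}\quad\text{or}\quad \|r^{\frac12}f\|_{L^\infty_r L^2(S^2)}\lesssim\|\Gamma^{\le1}f\|_{L^2}^{1/2}\|\nabla\Gamma^{\le 1} f\|^{1/2}_{L^2},
\]
which we pair with the Morawetz weight $\langle r\rangle^{-1/2}$. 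The bad term then becomes bounded by $\mathcal M\cdot\mathcal M\cdot\mathcal E$ up to the $\varepsilon^{1/2}$-dissipation of the high factor, namely $\varepsilon\,\mathcal{M}^2\cdot\varepsilon^{-1}\mathcal E\lesssim(\ln\varepsilon^{-1})^{-1}\varepsilon\cdot\varepsilon^{-1/2}\mathcal E$. That is consistent with closing at $\varepsilon^{1/2}(\ln\varepsilon^{-1})^{-1}$ once $\delta$ is small; the logarithm is exactly what absorbs the $\log$ loss of the Morawetz estimate. Terms involving $u_{df}$ are controlled using its better size $\varepsilon$ combined with dissipation $\varepsilon^{-1/2}\cdot\varepsilon$. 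The divergence-free part satisfies a heat equation with viscosity $\mu\ge\varepsilon$ forced by $\mathbb P_{df}$ of quadratic terms. Since $\mathbb P_{df}\nabla\tau=0$, the forcing has no pure acoustic self-interaction of the worst type (the curl of $\nabla(|u_{cf}|^2)$ vanishes), so its energy closes at level $\varepsilon$.

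\textbf{Step 3 (Morawetz estimate; main obstacle).} For the acoustic pair $(\tau,\operatorname{div}u)$, i.e.\ essentially a damped wave equation $\partial_t^2\tau-\Delta\tau-\nu\Delta\partial_t\tau=\text{NL}$, we apply the multiplier $\frac{x}{\langle r\rangle}\cdot\nabla\Gamma^{\le N_{\rm lo}}\tau+\frac{1}{\langle r\rangle}\Gamma^{\le N_{\rm lo}}\tau$ (with the analogous one on $u_{cf}$), which in 3D gives positivity $\int\langle r\rangle^{-1}|\nabla w|^2$ plus a boundary term at $r=0$, with no viscosity factor. The difficulty is twofold. The viscous term $\nu\Delta\partial_t$ produces commutators of size $\nu\|\nabla^2 w\|\|\nabla w\|$, which are controllable by the dissipation only after a logarithmic loss: splitting at frequencies $|\xi|\sim\varepsilon^{-1}$, or using $\langle r\rangle^{-1}$ versus a weight $\langle r\rangle^{-1}(\ln\langle r\rangle)^{-1}$ truncated at $r\sim\varepsilon^{-1}$. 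This is the source of $(\ln\varepsilon^{-1})^{1/2}$. In addition, the nonlinear terms in the Morawetz identity must be estimated by $\mathcal E$, $\mathcal M$, and the trace norm once more. I expect this to be the hardest step: getting a Morawetz bound for the hyperbolic–parabolic system that is uniform in $\nu\in(\varepsilon,1)$ with only a $\log$ loss, giving $\mathcal M\lesssim(\ln\varepsilon^{-1})^{1/2}\mathcal E(0)+\text{NL}$. Combining Steps 1–3 and choosing $\delta$ small improves all bootstrap constants, which proves \eqref{ineq1} and \eqref{ineq1a}.
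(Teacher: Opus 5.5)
Your overall architecture is the paper's. You bootstrap the top-order energy at size $\varepsilon^{1/2}(\ln\varepsilon^{-1})^{-1}$, the low-order divergence-free energy at size $\varepsilon$, and the low-order Morawetz norm $\mathcal M$ at size $\varepsilon^{1/2}(\ln\varepsilon^{-1})^{-1/2}$. The hidden dissipation of $\tau$ comes from the cross term between the $\tau$-equation and $\operatorname{div}u$; your Step 1 is equivalent to the paper's pairing of $\tilde\tau$ with $\tilde n$.

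\textbf{The gap is in Step 2, which is the heart of the proof.} What must be shown is $\|\Gamma^{\le N_*}(\tau,u)\|_{L^2_tL^\infty_x}\lesssim\delta\varepsilon^{1/2}$ for the low-order factor, instead of the naive $\varepsilon^{-1/2}\mathcal E$. Then every cubic term is at most $\delta\varepsilon^{1/2}\cdot\mathcal E\cdot\varepsilon^{-1/2}\mathcal E=\delta\mathcal E^2$, with exactly one Morawetz-controlled factor.

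The inequalities you quote cannot deliver this. They bound $\langle r\rangle|f|$ or $r^{1/2}\|f\|_{L^2_\omega}$ by unweighted $L^2$ norms of $f,\nabla f$, so after taking $L^2_t$ they return $\varepsilon^{-1/2}\mathcal E$ again. ``Pairing with $\langle r\rangle^{-1/2}$'' would put that weight on a top-order factor $\Gamma^Nw$ or $\nabla\Gamma^Nw$, for which no Morawetz bound exists. $\mathcal M$ is only available at order $N_{\rm lo}$: already the viscous error $\nu\|\Delta\tilde n\|^2_{L^2L^2}$ in the Morawetz identity costs two derivatives beyond $\tilde n$. So the bound $\mathcal M\cdot\mathcal M\cdot\mathcal E$ does not come from any H\"older splitting. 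The arithmetic on that line also fails: $\varepsilon^{1/2}(\ln\varepsilon^{-1})^{-1}\mathcal E$ is of size $\mathcal E^2/\delta$, which would not close.

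\textbf{The missing ingredient} is the refined trace inequality, Lemma~\ref{lem 2.2}. It converts the Morawetz norm into an unweighted trace on spheres at only logarithmic cost:
\[
\|f\|_{L^2_\omega}(r)\lesssim(\ln\varepsilon^{-1})^{\frac12}\big\|\langle r\rangle^{-\frac12}\nabla\nabla^{\le1}f\big\|_{L^2_x(r\le\varepsilon^{-1})}+\varepsilon^{\frac12}\|\nabla f\|_{L^2_x}.
\]
To prove it, integrate $\partial_r\|f\|^2_{L^2_\omega}$ over $[r,\varepsilon^{-1}]$ and apply Cauchy--Schwarz with weights $r\ln\varepsilon^{-1}$ and $(r\ln\varepsilon^{-1})^{-1}$. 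The factor $(\ln\varepsilon^{-1})^{-1}\int_1^{\varepsilon^{-1}}dr/r=1$ lets the $|f|^2/r$ term be absorbed. For $r\ge\varepsilon^{-1}$, write $|\partial_rf||f|\le\varepsilon|\partial_rf||f/r|r^2$ and use Hardy.

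Combined with $\mathcal M\lesssim(\ln\varepsilon^{-1})^{1/2}\mathcal E$, this gives
\[
\|\Gamma^{\le N_*}(\tau,u_{cf})\|_{L^2_tL^\infty_rL^2_\omega}\lesssim(\ln\varepsilon^{-1})\mathcal E+\mathcal E\lesssim\delta\varepsilon^{1/2}.
\]
The $u_{df}$ part is handled by the classical trace bound and its size $\varepsilon$. The full factor $(\ln\varepsilon^{-1})^{-1}$ in the data therefore pays for two half-logarithms, one from Morawetz and one from this trace inequality, not only for the Morawetz loss.

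\textbf{A secondary inaccuracy in Step 3.} With $f(r)=r/\langle r\rangle$, the radial derivative only gets weight $f'\sim\langle r\rangle^{-3}$, not $\langle r\rangle^{-1}$. The paper uses $f=r/(r+R)$ on each dyadic shell and sums over $1\le R\le\varepsilon^{-1}$. It is this summation that produces the $\ln\varepsilon^{-1}$, not the viscous term, which is simply bounded by $\nu\int(|\nabla\tilde\tau|^2+|\Delta\tilde n|^2)$. The region $r\ge\varepsilon^{-1}$ is controlled by $\varepsilon$ times the dissipation.
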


\begin{rem}

The possible improvement for the index from $1$ to $\frac12+$ is due to the observation of the wave enhanced dissipation for the density and the divergence of the velocity, see $\eqref{eq tilde tau u}_1$ and $\eqref{eq tilde n m}_1$ respectively, which is lost for the divergence free part of the velocity $u_{df}$.

\end{rem}

\begin{rem}

Noting that from \eqref{ineq1}, it is easy to see that the spacetime $L^2$ norm of the highest order derivative of the density admits different scale size.

\end{rem}

We shall prove the above theorem by a bootstrap argument.
Let $N \ge 10$ be a sufficiently large integer and 
\begin{equation}\label{def of low hi}
N_{\mathrm{lo}}:=\left[\frac{N}{2}\right]+4.
\end{equation}
Throughout this section, we assume that all solutions are smooth and satisfy the a prior bounds below on a time interval $[0,T]$:

\medskip

\noindent \textbf{Bootstrap assumptions.}
Assuming that for some sufficiently large constant $M>1$, the following estimates hold:
\begin{equation}
\begin{aligned}
&\left\|\Gamma^{\le N}(\tau,u)\right\|_{L^\infty(0,T;L^2_x)}
+ \mu^{1/2}\left\|\nabla \Gamma^{\le N} u_{\mathrm{df}}\right\|_{L^2(0,T; L^2_x)}
+ \nu^{1/2}\left\|\nabla \Gamma^{\le N} u_{\mathrm{cf}}\right\|_{L^2(0,T; L^2_x)} \\
&+ \nu^{1/2} M^{-1} \left\|\nabla \Gamma^{\le N-1} \tau\right\|_{L^2(0,T; L^2_x)}
\le M \delta  \varepsilon^{1/2} (\ln \varepsilon^{-1})^{-1},
\end{aligned}
\tag{BA1}
\end{equation}

\begin{equation*}
\left\|\Gamma^{\leq N_{\mathrm{lo}}} u_{\mathrm{df}}\right\|_{L^\infty(0,T;L^2_x)}
+ \mu^{1/2}\left\|\nabla \Gamma^{\leq N_{\mathrm{lo}}} u_{\mathrm{df}}\right\|_{L^2(0,T; L^2_x)}
\le M \delta \varepsilon,
\tag{BA2}
\end{equation*}

\begin{equation*}
\left\|\langle r\rangle^{-1/2} \nabla
\Gamma^{\leq N_{\mathrm{lo}}}(\tau,u_{\mathrm{cf}})\right\|_{L^2(0,T; L^2_x)}
\le M^3 \delta   \varepsilon^{1/2} (\ln \varepsilon^{-1})^{-1/2}.
\tag{BA3}
\end{equation*}

\medskip

\begin{rem}
(BA1) corresponds to the high-order energy estimate, including viscous dissipation for both compressible and incompressible components. (BA2) captures improved dissipation at low order estimate for the divergence-free component, which benefits from parabolic estimate. (BA3) shows the weighted Morawetz-type spacetime control for the compressible variables.

\end{rem}

\begin{rem}\label{rem14}

From (BA1), it is reasonable to assume that $|\tau|\le \frac12$ over $[0, T]$.

\end{rem}

\medskip

We aim to improve the constant $M$ to $M/2$, thereby closing the bootstrap argument.

	\section{Preliminaries}
	
	\subsection{Rotational Symmetry of Compressible Navier-Stokes Equations}

The compressible Navier-Stokes equations possess rotational invariance in
$\mathbb R^3$. The corresponding infinitesimal generators yield the
rotational vector fields, which form a family of commuting vector fields for
the system. This structure allows us to propagate angular regularity and
derive higher-order energy estimates.

For a vector-valued function $w(t,x)$ and a scalar function $f(t,x)$, we
define their transformations under a rotation $Q\in SO(3)$ by
\begin{equation}
\begin{aligned}
w_Q(t,x)&=Q^{T}w(t,Qx),\\
f_Q(t,x)&=f(t,Qx).
\end{aligned}
\end{equation}
Here, $Q(s)$ is the one-parameter subgroup of $SO(3)$ defined by
\[
Q(s)=\exp\big(s(e_i\otimes e_j-e_j\otimes e_i)\big).
\]
For a scalar function $f$ and a vector-valued function $w$, we define the
corresponding infinitesimal rotation operators by
\begin{equation}
\begin{aligned}
\widetilde{\Omega}_{ij}f
&:=\left.\frac{d}{ds}f_{Q(s)}\right|_{s=0},\\
\widetilde{\Omega}_{ij}w
&:=\left.\frac{d}{ds}w_{Q(s)}\right|_{s=0}.
\end{aligned}
\end{equation}
By the definition of the rotation transformation, for scalar functions one
has
\begin{equation}
\widetilde{\Omega}_{ij}f
=
\Omega_{ij}f,
\end{equation}
where
\begin{equation}
\Omega_{ij}=x_j\partial_i-x_i\partial_j
\end{equation}
is the classical rotational vector field. However, for vector-valued
functions, the infinitesimal rotation also acts on the vector components.
More precisely,
\begin{equation}
\widetilde{\Omega}_{ij}w
=
\Omega_{ij}w+
(e_i\otimes e_j-e_j\otimes e_i)w .
\end{equation}

We now rewrite equations \eqref{eq1} in terms of the
perturbed sound speed. Let 
\begin{equation} \label{def of tau}
 p(\rho) = \frac{\rho^\gamma}{\gamma}, \; c(\rho) =  \rho^{\frac{\gamma-1}{2}}, \; \tau = c(\rho)-1. 
\end{equation}
The system can be rewritten as
\begin{equation}\label{eq tau u}
	\begin{aligned}
		\begin{split}
			\left\lbrace
			\begin{array}{lr}
		\partial_t \tau + b(\tau+1) \operatorname{div} u + (u \cdot \nabla) \tau = 0, \\ 
\partial_t u + (u \cdot \nabla) u + b^{-1}(\tau+1) \nabla \tau = \bar{\mu} \Delta u + (\bar{\lambda} + \bar{\mu}) \nabla (\operatorname{div} u),
			\end{array}	
			\right.
		\end{split}	
	\end{aligned}
\end{equation}	
	where
\begin{equation} \label{def of b}   
(\bar{\lambda}, \bar{\mu}) := \rho^{-1} (\lambda,\mu), \ b := \frac{\gamma-1}{2}.
\end{equation}	
We state the following lemma.

	\begin{lem} \label{lem 2.1}
		Let \( (\tau, u) \) be a solution of the system \eqref{eq tau u}. Then, for any rotation $ Q \in SO(3)$, the rotated pair \( (\tau_Q, u_Q) \) is also a solution of system \eqref{eq tau u}.
	\end{lem}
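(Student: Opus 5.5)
The plan is a direct verification by the chain rule. Fix $Q\in SO(3)$ and set $y=Qx$, with $\tau_Q(t,x)=\tau(t,Qx)$ and $u_Q(t,x)=Q^{T}u(t,Qx)$. We substitute these into each equation of \eqref{eq tau u} and check that every term transforms covariantly: scalar terms are simply evaluated at $Qx$, and vector terms become $Q^{T}$ applied to the original term evaluated at $Qx$. The only facts used are $Q^{T}Q=I$ and the chain rule $\nabla_x[f(Qx)]=Q^{T}(\nabla f)(Qx)$. The coefficients $\bar\lambda=\lambda\rho^{-1}$ and $\bar\mu=\mu\rho^{-1}$ depend on $\rho$, hence on $\tau$ through $\rho=(1+\tau)^{1/b}$. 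They are therefore scalar functions and transform as $\bar\mu_Q(t,x)=\bar\mu(t,Qx)$, consistent with $\tau_Q$.

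\textbf{Step 1: first-order identities.} With $J=\nabla u$ we have $\nabla_x u_Q(t,x)=Q^{T}(\nabla u)(t,Qx)\,Q$. Taking the trace gives
\[
\operatorname{div}u_Q(t,x)=(\operatorname{div}u)(t,Qx).
\]
For the transport term, $(u_Q\cdot\nabla_x)\tau_Q=(Q^{T}u)\cdot Q^{T}\nabla\tau=u\cdot\nabla\tau$, evaluated at $Qx$. Likewise
\[
(u_Q\cdot\nabla_x)u_Q=Q^{T}\big((u\cdot\nabla)u\big)(t,Qx).
\]
For the pressure term, $b^{-1}(\tau_Q+1)\nabla_x\tau_Q=Q^{T}\big(b^{-1}(\tau+1)\nabla\tau\big)(t,Qx)$. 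The time derivative commutes with composition by $Q$. Together these give the $\tau$-equation at $(t,Qx)$ directly.

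\textbf{Step 2: second-order terms.} Since $Q$ is orthogonal, the Laplacian is rotation invariant: $\Delta_x[f(Qx)]=(\Delta f)(Qx)$. Applying this componentwise, and using that $Q^{T}$ is a constant matrix, gives $\Delta u_Q=Q^{T}(\Delta u)(t,Qx)$. Applying the chain rule to the scalar function $\operatorname{div}u$ from Step 1 gives
\[
\nabla\operatorname{div}u_Q=Q^{T}(\nabla\operatorname{div}u)(t,Qx).
\]
Multiplying by the scalars $\bar\mu(t,Qx)$ and $\bar\lambda(t,Qx)+\bar\mu(t,Qx)$ keeps the same form. Collecting terms, the $u$-equation for $(\tau_Q,u_Q)$ equals $Q^{T}$ times the original $u$-equation evaluated at $(t,Qx)$, which vanishes.

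No step is a genuine obstacle. The only points needing care are applying $Q^{T}$ on the correct side, which the chain-rule formula for $\nabla_x u_Q$ settles, and checking that the variable viscosity coefficients are scalars, so they do not break the covariance.
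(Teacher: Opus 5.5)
Your proposal is correct and follows essentially the same route as the paper: a direct chain-rule check that each term of \eqref{eq tau u} transforms covariantly under $(\tau,u)\mapsto(\tau_Q,u_Q)$. Your identity $\nabla_x u_Q=Q^{T}(\nabla u)(t,Qx)\,Q$ compactly packages the paper's index computations, and your remark that $\bar\mu=\mu\rho^{-1}$ and $\bar\lambda=\lambda\rho^{-1}$ are scalar functions of $\tau$ (so they transform like $\tau_Q$) makes explicit a point the paper's proof leaves implicit.
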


\begin{proof}
It suffices to verify that all differential operators appearing in the equations commute with the rotation transformation. Let
$Q=(q_{ij})$ satisfy
\[
Q^TQ=QQ^T=I.
\]
For the divergence operator, we have
\begin{equation}
\begin{aligned}
\nabla\cdot w_Q
&=\partial_i(q_{ji}w_j(Qx,t))\\
&=q_{ji}q_{ki}\partial_k w_j(Qx,t)\\
&=(\partial_j w_j)(Qx,t)
=(\nabla\cdot w)^Q .
\end{aligned}
\end{equation}
Similarly, for scalar functions,
\begin{equation}
\begin{aligned}
(w_Q\cdot\nabla)f_Q
&=q_{ji}w_j(Qx,t)q_{ki}\partial_k f(Qx,t)\\
&=(w\cdot\nabla f)_Q .
\end{aligned}
\end{equation}
For vector-valued functions,
\begin{equation}
\begin{aligned}
(v_Q\cdot\nabla)w_Q
&=q_{ji}v_j(Qx,t)\partial_i
(q_{lk}w_k(Qx,t))\\
&=q_{lk}(v\cdot\nabla w)_k(Qx,t)\\
&=((v\cdot\nabla)w)_Q .
\end{aligned}
\end{equation}
Moreover,
\begin{equation}
\begin{aligned}
(v_Q)_t&=(v_t)_Q,\\
\nabla f_Q&=(\nabla f)_Q,\\
\nabla(\nabla\cdot v_Q)&=(\nabla(\nabla\cdot v))_Q,\\
\Delta v_Q&=(\Delta v)_Q .
\end{aligned}
\end{equation}
Therefore, every term in \eqref{eq tau u} is preserved under the rotation, which proves
the lemma.
\end{proof}

We denote by $\Gamma$ any operator in the collection of spatial and
rotational derivatives,
\begin{equation}
\Gamma\in
\{\partial_1,\partial_2,\partial_3,
\widetilde{\Omega}_{12},
\widetilde{\Omega}_{13},
\widetilde{\Omega}_{23}\}.
\end{equation}
The rotational invariance proved in Lemma \ref{lem 2.1} implies that the above
operators preserve the differential structure of the compressible
Navier-Stokes system. In particular, they commute with the differential
operators and Helmholtz projections $\mathbb{P}_{cf}, \mathbb{P}_{df}$ involved in the analysis, and we have
\begin{equation}
[\Gamma,\mathbb{P}_{\rm cf}]=[\Gamma,\mathbb{P}_{\rm df}]
=[\Gamma,\operatorname{div}]=0 .
\end{equation}
These commutation properties allow us to apply $\Gamma$ repeatedly to the
system while preserving the structure of the principal part. Define the operators $\Gamma^k$ by
\begin{equation}
\begin{aligned}
\Gamma^{N}
:=
\left\{
\partial^\beta\widetilde{\Omega}^{k}:
|\beta|+|k|\leq k,\quad |k|\leq4
\right\},
\end{aligned}
\end{equation}
where
\[
\partial^\beta
=\partial_1^{\beta_1}\partial_2^{\beta_2}\partial_3^{\beta_3},
\qquad
\widetilde{\Omega}^{k}
=
\widetilde{\Omega}_{12}^{\alpha_1}
\widetilde{\Omega}_{13}^{\alpha_2}
\widetilde{\Omega}_{23}^{\alpha_3}.
\]
Applying a composition of the operators $\Gamma$ to equations \eqref{eq tau u} and
using the commutation relations, we obtain 
\begin{equation}\label{eq Gamma tau u}
\begin{aligned}
\left\lbrace
\begin{array}{lr}
(\Gamma^k \tau)_t + b  \sum\limits_{|c|+|d| = k}  \left[\Gamma^c(\tau+1) \nabla \cdot \Gamma^d u + \Gamma^c u \cdot \nabla \Gamma^d \tau \right]= 0, &\\ 
(\Gamma^k u)_t + \sum\limits_{|c|+|d| = k} \left[\Gamma^c u \cdot \nabla \Gamma^d u + b^{-1} \Gamma^c (\tau+1) \nabla \Gamma^d \tau \right]&\\
 = \sum\limits_{|c|+|d| = k} \Gamma^c(\rho^{-1}) [{\mu} \Delta \Gamma^d u + ({\lambda} + {\mu}) \nabla (\nabla \cdot \Gamma^d u)].&
\end{array}	
\right.
\end{aligned}
\end{equation}
Define $\tilde{\tau}=\Gamma^k \tau$ and $\tilde{u}=\Gamma^k u$. Separating the highest-order terms from the commutators, we have
\begin{equation}\label{eq tilde tau u}
\begin{aligned}
\left\lbrace
\begin{array}{lr}
\tilde{\tau}_t + b (\tau+1) \operatorname{div} \tilde{u} + (u \cdot \nabla) \tilde{\tau} = F[\tilde{\tau}] & \\ 
\tilde{u}_t + u \cdot \nabla \tilde{u} + b^{-1} (\tau+1) \nabla \tilde{\tau} - \bar{\mu} \Delta \tilde{u} - (\bar{\lambda} + \bar{\mu}) \nabla (\operatorname{div} \tilde{u}) = F[\tilde{u}],
\end{array}	
\right.
\end{aligned}
\end{equation}
where
\begin{equation}\label{def F tau u}
\begin{aligned}
F[\tilde{\tau}] &:= - \sum\limits_{\substack{|c|+|d| = k \\ |c|\geq 1}} \bigl( b \Gamma^c \tau \nabla \cdot \Gamma^d u + \Gamma^c u \cdot \nabla \Gamma^d \tau \bigr),  \\ 
F[\tilde{u}] &:= \sum\limits_{\substack{|c|+|d| = k \\ |c| \geq 1}} \Big\{ \Gamma^c (\rho^{-1}) [{\mu} \Delta \Gamma^d u + ({\lambda} + {\mu}) \nabla (\operatorname{div} \Gamma^d u)] \\
&\quad - \Gamma^c u \cdot \nabla \Gamma^d u - b^{-1} \Gamma^c \tau \nabla \Gamma^d \tau \Big\}.
\end{aligned}
\end{equation}
In fact, the nonlinear terms $F[\widetilde{\tau}]$ and $F[\widetilde{u}]$ contain only
lower-order commutator terms.

Note that the third term on the left-hand side of \eqref{eq Gamma tau u}$_2$ satisfies
\begin{equation}
    \sum_{|c|+|d| = k}\nabla \times \left[b^{-1} \Gamma^c (\tau+1) \nabla \Gamma^d \tau\right] = 0.
\end{equation} 
Recall the vector identities
\begin{equation}
\begin{aligned}
(w \cdot \nabla) v + (v \cdot \nabla )w & = - \left[w \times (\nabla \times v) + v \times (\nabla \times w)\right] + \nabla(v \cdot w), \\
\nabla \times(w \times w^\prime) &= (w^\prime \cdot \nabla ) w - (w \cdot \nabla ) w^\prime +  (\nabla \cdot w^\prime) w- (\nabla \cdot w) w^\prime.
\end{aligned}
\end{equation}
Then the second term on the left-hand side of \eqref{eq Gamma tau u}$_2$ obeys
\begin{equation}
\begin{aligned}
 & \quad (\Gamma^c u \cdot \nabla) \Gamma^d u + (\Gamma^d u \cdot \nabla) \Gamma^c u \\
 &= - [ \Gamma^c u \times (\nabla \times \Gamma^d u) + \Gamma^d u \times (\nabla \times \Gamma^c u) ] + \nabla( \Gamma^c u \cdot \Gamma^d u ).
 \end{aligned}
 \end{equation}
Consequently,
\begin{equation}
\begin{aligned}
& \sum\limits_{|c|+|d| =k} \nabla \times \frac{1}{2} [(\Gamma^c u \cdot \nabla) \Gamma^d u + (\Gamma^d u \cdot \nabla) \Gamma^c u] \\
& = - \sum\limits_{|c|+|d| = k} \nabla \times \frac{1}{2} \{ [ \Gamma^c u \times (\nabla \times \Gamma^d u) + \Gamma^d u \times (\nabla \times \Gamma^c u) ] + \nabla( \Gamma^c u \cdot \Gamma^d u ) \} \\
& = -\sum\limits_{|c|+|d| = k} \frac{1}{2} [ \nabla \times (\Gamma^c u \times (\nabla \times \Gamma^d u)) + \nabla \times (\Gamma^d u \times (\nabla \times \Gamma^c u)) ] \\
& = -\sum\limits_{|c|+|d| = k} [\nabla \times (\Gamma^c u \times (\nabla \times \Gamma^d u))] \\
& = - (( \nabla \times \Gamma^d u ) \cdot \nabla ) \Gamma^c u + ( \Gamma^c u \cdot \nabla ) (\nabla \times \Gamma^d u ) + (\nabla \cdot \Gamma^c u ) (\nabla \times \Gamma^d u ).
\end{aligned}
\end{equation}
For the right-hand side of \eqref{eq Gamma tau u}$_2$, we have
\begin{equation}
\begin{aligned}
{\mu} \Delta \Gamma^d u + ({\lambda} + {\mu}) \nabla (\nabla \cdot \Gamma^d u) = \nu \nabla (\nabla \cdot \Gamma^d u) - \mu \nabla \times (\nabla \times \Gamma^d u).
\end{aligned}
\end{equation}
Combining the preceding computations, we obtain the following equations for $\tilde{m}:=\operatorname{curl} \tilde{u}$ and $\tilde{n}:=\operatorname{div} \tilde{u}$:
\begin{equation}\label{eq tilde n m}
\begin{aligned}
\left\lbrace
\begin{array}{lr}
\tilde{n}_t + u \cdot \nabla \tilde{n} + b^{-1} (\tau+1) \Delta \tilde{\tau} - \bar{\nu} \Delta \tilde{n} = F[\tilde{n}], \\ 
\tilde{m}_t - \bar{\mu} \Delta \tilde{m} + (u \cdot \nabla) \tilde{m} = F[\tilde{m}],
\end{array}	
\right.
\end{aligned}
\end{equation}
where
\begin{equation}\label{def F n m}
\begin{aligned}
F[\tilde{n}] &: = - \sum\limits_{1\leq i,j \leq 3} \partial_i u_j \partial_j \tilde{u}_i - b^{-1}\nabla \tau \cdot \nabla \tilde{\tau} \\
& \quad - b^{-1}(\tau+1)^{-\frac{\gamma+1}{\gamma-1}} \nabla \tau \cdot \left( \mu \Delta \tilde u + (\lambda+\mu) \nabla (\operatorname{div} \tilde u)\right ) + \operatorname{div} F[\tilde{u}] , \\ 
F[\tilde{m}]& := \sum\limits_{|c|+|d| = k} \nabla \Gamma^c (\rho^{-1}) \times \left[ \nu \nabla (\nabla \cdot \Gamma^d u) - \mu \nabla \times (\nabla \times \Gamma^d u) \right] \\
&\quad + \sum\limits_{|c|+|d| = k} [ (\nabla \times \Gamma^d u) \cdot \nabla ] \Gamma^c u  - \sum\limits_{|c|+|d| = k} (\nabla \times \Gamma^d u) (\nabla \cdot \Gamma^c u) \\
&\quad + \sum\limits_{\substack{|c|+|d| = k \\ |c| \geq 1}} \Gamma^c (\rho^{-1}) {\mu} \Delta (\nabla \times \Gamma^d u)  - \sum\limits_{\substack{|c|+|d| = k \\ |c| \geq 1}} (\Gamma^c u \cdot \nabla) (\nabla \times \Gamma^d u).
\end{aligned}
\end{equation}

With the commuted equations in hand, we now collect the analytic inequalities used to estimate their nonlinear terms.

\subsection{Some Useful Inequalities}

In this section, we collect several Sobolev-type estimates that will be
frequently used in the nonlinear energy estimates. We use the polar coordinates
\[
x=r\omega,\qquad r=|x|,\quad \omega\in\mathbb S^2 .
\]
The variables $r$ and $\omega$ represent the radial and angular directions,
respectively. 

\medskip
\noindent
\textbf{(1) Sobolev inequalities on the unit sphere.}

\begin{lem}[\cite{Kal85}]
For a sufficiently smooth function $f$, the angular
derivatives generated by the rotational vector fields yield
\begin{equation} \label{sobolev}
\|f\|_{L^\infty_\omega}
\lesssim
\|\widetilde{\Omega}^{\leq 2}f\|_{L^2_\omega},
\qquad
L^p_\omega:=L^p_{d\omega}(\mathbb S^2).
\end{equation}
\end{lem}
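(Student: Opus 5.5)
The plan is to reduce \eqref{sobolev} to the standard Sobolev embedding $H^2(\mathbb S^2)\hookrightarrow L^\infty(\mathbb S^2)$, which holds because $2>\dim\mathbb S^2/2=1$. We then identify the $H^2$ norm on the sphere with the $L^2_\omega$ norm of at most two rotational derivatives. Fix $r>0$ and regard $g(\omega):=f(r\omega)$ as a function on $\mathbb S^2$; the case $r=0$ is trivial, since the left side is then a single value and $\widetilde\Omega$ annihilates nothing needed. For scalar $f$ we have $\widetilde\Omega_{ij}f=\Omega_{ij}f$. Moreover $\Omega_{ij}=x_j\partial_i-x_i\partial_j$ is tangent to spheres $|x|=r$ and depends only on the angular variable, since it is homogeneous of degree zero. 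Hence $\Omega_{ij}f(r\omega)=(\Omega_{ij}g)(\omega)$, where on the right $\Omega_{ij}$ denotes the induced Killing fields on $\mathbb S^2$.

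The key geometric input is that the three Killing fields $\Omega_{12},\Omega_{13},\Omega_{23}$ span the tangent space $T_\omega\mathbb S^2$ at every point. They also satisfy $\sum_{i<j}\Omega_{ij}^2=\Delta_{\mathbb S^2}$ and $\sum_{i<j}|\Omega_{ij}g|^2=|\nabla_{\mathbb S^2}g|^2$. It follows that
\[
\|g\|_{H^1(\mathbb S^2)}^2\simeq\sum_{|\alpha|\le1}\|\Omega^\alpha g\|_{L^2_\omega}^2 .
\]
For second order, elliptic regularity for $\Delta_{\mathbb S^2}$ on the compact manifold gives
\[
\|g\|_{H^2(\mathbb S^2)}\lesssim\|g\|_{L^2_\omega}+\|\Delta_{\mathbb S^2}g\|_{L^2_\omega}\lesssim\|\Omega^{\le2}g\|_{L^2_\omega}.
\]
One can also bound all second covariant derivatives by sums of $\Omega_{ij}\Omega_{kl}g$ and $\Omega g$ directly, using that the $\Omega_{ij}$ frame the tangent bundle with smooth coefficients. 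Combining with the Sobolev embedding yields $\|g\|_{L^\infty_\omega}\lesssim\|\Omega^{\le2}g\|_{L^2_\omega}$, with a constant independent of $r$ because everything lives on the fixed unit sphere.

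For vector-valued $f=w$, where $\widetilde\Omega_{ij}w=\Omega_{ij}w+(e_i\otimes e_j-e_j\otimes e_i)w$, we apply the scalar estimate componentwise. Then $\Omega^{\le2}w$ is written as $\widetilde\Omega^{\le2}w$ minus terms of the form $A\,\widetilde\Omega^{\le1}w$ and $A A' w$, where $A$ and $A'$ are constant antisymmetric matrices. These lower-order terms are bounded by $\|\widetilde\Omega^{\le1}w\|_{L^2_\omega}$, so $\|\Omega^{\le2}w\|_{L^2_\omega}\lesssim\|\widetilde\Omega^{\le2}w\|_{L^2_\omega}$. The only point requiring care is the second-order comparison between the covariant Hessian and the $\Omega_{ij}\Omega_{kl}$. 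I would sidestep it via the Laplacian identity and elliptic regularity rather than computing Christoffel symbols, so no step is a real obstacle.
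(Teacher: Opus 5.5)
Your proposal is correct, and it is the standard argument behind the citation to Klainerman. The paper gives no proof of its own: the lemma amounts to the embedding $H^2(\mathbb S^2)\hookrightarrow L^\infty(\mathbb S^2)$ together with the fact that $\Omega_{12},\Omega_{13},\Omega_{23}$ span $T_\omega\mathbb S^2$ at every point, which is exactly your route, and your reduction from $\widetilde\Omega$ to $\Omega$ for vector-valued $w$ (via the constant antisymmetric matrices commuting with the componentwise $\Omega_{kl}$) correctly fills in a detail the paper leaves implicit.
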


\medskip
\noindent
\textbf{(2) Hardy's inequality.}

\begin{lem}[\cite{Chemin11}]

For $0\leq s<\frac32$, the following inequality holds for a sufficiently smooth function $f$:
\begin{equation}\label{hardy}
\|r^{-s}f\|_{L^2_x(\mathbb R^3)}
\lesssim
\|f\|_{\dot H^s(\mathbb R^3)} .
\end{equation}
\end{lem}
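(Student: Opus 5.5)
The statement to prove is the Hardy inequality \eqref{hardy}: $\|r^{-s}f\|_{L^2_x}\lesssim\|f\|_{\dot H^s}$ for $0\le s<\frac32$ in $\mathbb R^3$. The plan is to reduce it, by interpolation, to the endpoint case $s=1$, and then handle $1<s<\frac32$ separately via Fourier analysis.

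\textbf{Endpoint $s=1$.} By density, take $f\in C_c^\infty(\mathbb R^3)$. Since $\operatorname{div}(x/r^2)=1/r^2$ in three dimensions, integration by parts gives
\[
\int r^{-2}f^2\,dx=-\int \frac{x}{r^2}\cdot\nabla(f^2)\,dx
\le 2\|r^{-1}f\|_{L^2}\|\nabla f\|_{L^2}.
\]
This yields $\|r^{-1}f\|_{L^2}\le 2\|\nabla f\|_{L^2}$. The case $s=0$ is trivial.

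\textbf{Interpolation for $0<s<1$.} Consider the operator $f\mapsto r^{-s}f$ in the form $T_s=r^{-s}|D|^{-s}$. For $\operatorname{Re} z\in[0,1]$, the family $r^{-z}|D|^{-z}$ is analytic, and its imaginary powers $r^{-i t}$ and $|D|^{-it}$ are unitary on $L^2$. Stein's complex interpolation between $T_0$ and $T_1$ then gives boundedness of $T_s$ on $L^2$ for all $0<s<1$.

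\textbf{Range $1<s<\frac32$.} Here interpolation between $s=1$ and an endpoint is not available, since the endpoint $s=\frac32$ fails. This is the main obstacle, and I would argue directly with a dyadic decomposition. By Plancherel, the claim is equivalent to the bound
\[
\int |x|^{-2s}|f|^2\,dx\lesssim\int|\xi|^{2s}|\hat f|^2\,d\xi.
\]
Write $|x|^{-2s}$ as $c\,|\xi|^{2s-3}*$ on the Fourier side; this is valid because $0<2s<3$. The left side then becomes $c\iint \hat f(\xi)\overline{\hat f(\eta)}\,|\xi-\eta|^{2s-3}\,d\xi\,d\eta$. Apply Schur's test with weight $|\xi|^{-\alpha}$ to the kernel
\[
K(\xi,\eta)=|\xi|^{-s}|\xi-\eta|^{2s-3}|\eta|^{-s}.
\]
By homogeneity, the Schur integral $\int K(\xi,\eta)\,|\eta|^{-\alpha}\,d\eta=C\,|\xi|^{-\alpha}$ holds provided the integral converges. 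Convergence requires three conditions:
\begin{itemize}
\item near $\eta=0$: $s+\alpha<3$;
\item near $\eta=\xi$: $2s-3>-3$;
\item at infinity: $2s-3-s-\alpha<-3$, i.e.\ $\alpha>s$.
\end{itemize}
Choosing $\alpha\in(s,3-s)$ is possible exactly when $s<\frac32$, which gives the result on the whole range.

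Alternatively, one could cite \cite{Chemin11} directly, since only the cases $s\le1$ (and $s=1$ in particular) are likely used later in the paper.
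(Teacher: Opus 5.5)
Your proof is correct. The paper gives no argument of its own for this lemma; it simply quotes it from the cited textbook. So your proposal is a self-contained alternative rather than a reconstruction of the paper's route.

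\textbf{Your third step already proves the whole lemma.} The convergence conditions for $\int_{\mathbb R^3}|\hat\xi-\zeta|^{2s-3}|\zeta|^{-s-\alpha}\,d\zeta$ are only $s>0$ and $s<\alpha<3-s$, and nothing there uses $s>1$. Hence the Fourier-side Schur test with weight $|\xi|^{-\alpha}$ handles every $0<s<\frac32$ at once. In $\mathbb R^d$ the same argument with $\alpha\in(s,d-s)$ covers $0<s<\frac d2$. With the symmetric choice $\alpha=\frac32$ it even gives the sharp constant; for $s=1$ one gets $\int_{\mathbb R^3}|e-\zeta|^{-1}|\zeta|^{-5/2}\,d\zeta=16\pi$, which reproduces $\|r^{-1}f\|_{L^2}\le 2\|\nabla f\|_{L^2}$.

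The case split, the integration by parts at $s=1$, and the complex interpolation are therefore logically redundant. What they buy is a fully elementary route to exactly the cases the paper uses. These are $s=1$, in the error terms and the exterior region $r\ge\varepsilon^{-1}$ of the Morawetz lemma, and $s=\frac12$, in the estimate of $\mathcal J_{32}$.

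\textbf{Two points of rigor.}
\begin{itemize}
\item State the identity $\int|x|^{-2s}|f|^2\,dx=c\iint|\xi-\eta|^{2s-3}\hat f(\xi)\overline{\hat f(\eta)}\,d\xi\,d\eta$ first for $\hat f\in C_c^\infty(\mathbb R^3\setminus\{0\})$. There everything converges absolutely, since $|x|^{-2s}$ and $|\xi|^{2s-3}$ are locally integrable. Then extend by density and Fatou.
\item If you keep the interpolation step, fix $f$ with $\hat f\in C_c^\infty(\mathbb R^3\setminus\{0\})$ and $g\in C_c^\infty(\mathbb R^3\setminus\{0\})$. Apply the three-lines lemma to the bounded analytic function $z\mapsto\langle r^{-z}|D|^{-z}f,g\rangle$ on $0\le\operatorname{Re}z\le 1$; this gives the constant $2^s$. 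Equivalently, the L\"owner--Heinz inequality turns the form bound $r^{-2}\le 4(-\Delta)$ directly into $r^{-2s}\le 4^s(-\Delta)^s$ for $0\le s\le 1$.
\end{itemize}
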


\medskip
\noindent
\textbf{(3) Trace-type inequalities.}

This estimate concerns the trace of a function on the spheres
$\{r=\mathrm{const}\}$. For a smooth function $f$, integration by parts in the
radial variable gives
\begin{equation}\label{L2L2}
        \begin{aligned}
	& \quad	\| f \|_{L^2_\omega}^2 
     \lesssim  \int_{r}^\infty \int_{\mathbb{S}^2} |\partial_{\hat{\rho}} f| \, |f| \, d\hat{\rho} d \omega \\
    & \lesssim 
		\begin{cases} 
			\|\frac{\partial_r f}{r}\|_{L^2_x} \| \frac{ f}{r} \|_{L^2_x} \lesssim  \|\nabla \nabla^{\leq 1} f\|_{L^2_x ( \{\hat{\rho} \geq r\} )}^2, \\ 
			r^{-1} \|\partial_r f\|_{L^2_x} \| \frac{ f}{r} \|_{L^2_x} \lesssim r^{-1} \|\nabla f\|_{L^2_x( \{\hat{\rho} \geq r\} )}^2, \\ 
			r^{-2} \|\partial_r f\|_{L^2_x} \| f \|_{L^2_x} \lesssim r^{-2} \|\nabla^{\leq 1} f\|_{L^2_x( \{\hat{\rho} \geq r\} )}^2.
		\end{cases}
        \end{aligned}
		\end{equation}
The resulting trace estimate is recorded in the following classical form.
\begin{lem}[\cite{Kla87}]
For a sufficiently smooth function $f$, we have
		\begin{equation} \label{ineq r f}
 \begin{aligned}   
		\| \langle r \rangle f \|_{L^\infty_r  L^2_\omega} & \lesssim \| \nabla^{\leq 2} f \|_{L^2_x}, \\
        \| \langle r \rangle^{\frac{1}{2}} f \|_{L^\infty_r  L^2_\omega} & \lesssim \| \nabla \nabla^{\leq 1} f \|_{L^2_x} , \\
 \| r^{\frac{1}{2}} f \|_{L^\infty_r  L^2_\omega} & \lesssim \| \nabla  f \|_{L^2_x}  .
 \end{aligned}
		\end{equation}
\end{lem}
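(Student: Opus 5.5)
The plan is to prove all three inequalities in \eqref{ineq r f} from the fundamental theorem of calculus in the radial variable, as in \eqref{L2L2}, combined with Hardy's inequality \eqref{hardy}. For smooth decaying $f$, a fixed $\omega\in\mathbb S^2$ and any $r>0$, I will write
\[
r^{2a}|f(r\omega)|^2=-\int_r^\infty \partial_{\hat\rho}\bigl(\hat\rho^{2a}|f(\hat\rho\omega)|^2\bigr)\,d\hat\rho
\]
for a weight exponent $a$ to be chosen. Integrating over $\omega$, this bounds $r^{2a}\|f(r\cdot)\|_{L^2_\omega}^2$ by
\[
\int_r^\infty\!\!\int_{\mathbb S^2}\Bigl(2\hat\rho^{2a}|\partial_{\hat\rho} f||f|+2a\hat\rho^{2a-1}|f|^2\Bigr)\,d\omega\, d\hat\rho ,
\]
and I will then insert $\hat\rho^{2}$ to turn each term into a Lebesgue integral against $dx$.

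For the third inequality I take $a=\frac12$. The right-hand side becomes
\[
\int_{|x|\ge r}\Bigl(\frac{|\partial_r f||f|}{|x|}+\frac{|f|^2}{|x|^2}\Bigr)dx
\le \|\nabla f\|_{L^2}\Bigl\|\frac f r\Bigr\|_{L^2}+\Bigl\|\frac f r\Bigr\|_{L^2}^2 .
\]
Hardy's inequality \eqref{hardy} with $s=1$ gives $\|f/r\|_{L^2}\lesssim\|\nabla f\|_{L^2}$, so $r\|f\|_{L^2_\omega}^2\lesssim\|\nabla f\|_{L^2}^2$ uniformly in $r$. This proves $\|r^{1/2}f\|_{L^\infty_rL^2_\omega}\lesssim\|\nabla f\|_{L^2_x}$.

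For the second inequality I split into $r\ge1$, where $\langle r\rangle^{1/2}\sim r^{1/2}$ and the third inequality applies directly. For $r\le1$ I need $\|f(r\cdot)\|_{L^2_\omega}\lesssim\|\nabla\nabla^{\le1}f\|_{L^2}$. This is the $a=0$ case: the bound is $\int_{|x|\ge r}|\partial_rf||f|\,|x|^{-2}dx\le\|\partial_r f/r\|_{L^2}\|f/r\|_{L^2}$. Here Hardy \eqref{hardy} with $s=1$ controls $\|\partial_rf/r\|_{L^2}$ by $\|\nabla^2f\|_{L^2}$ and $\|f/r\|_{L^2}$ by $\|\nabla f\|_{L^2}$, which is exactly the first case of \eqref{L2L2}. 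Alternatively one can use the Sobolev trace on the sphere, or $L^\infty\lesssim\|\nabla f\|^{1/2}\|\nabla^2f\|^{1/2}$ in $\mathbb R^3$.

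For the first inequality, again for $r\le1$ I use the bound just obtained, which is dominated by $\|\nabla^{\le2}f\|$. For $r\ge1$ I take $a=1$. The right-hand side becomes $\int_{|x|\ge r}(|\partial_rf||f|+|f|^2/|x|)\,dx\lesssim\|\nabla^{\le1}f\|_{L^2}^2$ using $|x|\ge1$, which is the third case of \eqref{L2L2} multiplied by $r^2$.

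The only delicate point is the use of Hardy's inequality with $s=1$, which requires $f\in\dot H^1$ and decay at infinity so the boundary term at $\hat\rho=\infty$ vanishes. I would first prove everything for $f\in C_c^\infty$ and then conclude by density, since both sides are continuous in the relevant Sobolev norms. There is no genuine obstacle; the main care is bookkeeping the weight exponent $a$ against the region $r\lessgtr1$.
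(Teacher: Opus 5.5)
Your proposal is correct and is essentially the paper's argument: the radial fundamental-theorem-of-calculus bound \eqref{L2L2} combined with Hardy's inequality \eqref{hardy}, the only cosmetic difference being that you place the weight $\hat\rho^{2a}$ inside the derivative rather than extracting $r^{-1}$ or $r^{-2}$ via $\hat\rho\ge r$; the extra $\hat\rho^{2a-1}|f|^2$ term this produces has a favorable sign and can simply be dropped. One small precision: to bound $\|\partial_r f/r\|_{L^2_x}$ by $\|\nabla^2 f\|_{L^2_x}$, use $|\partial_r f|\le|\nabla f|$ and apply Hardy to each $\partial_i f$, not to $\partial_r f$ itself, since $\nabla\partial_r f$ contains a $|x|^{-1}\nabla f$ term.
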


We next refine the trace estimate in a form adapted to the viscosity scale.
	\begin{lem} \label{lem 2.2}
For a sufficiently smooth function $f$, we have \begin{equation}\label{eq00}
\| f \|_{ L^2_\omega} (r) \lesssim \left(\ln \varepsilon^{-1}\right)^{\frac{1}{2}} \left\| \langle r \rangle^{-\frac{1}{2}}  \nabla \nabla^{\leq 1} f \right\|_{L^2_x\left( r  \le \varepsilon^{-1}\right)} + \varepsilon^{\frac{1}{2}} \left\| \nabla f \right\|_{L^2_x\left( \mathbb{R}^3\right )}.
		\end{equation}
	
	\end{lem}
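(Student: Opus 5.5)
The plan is to integrate in the radial variable from the sphere of radius $r$ out to the scale $R:=\varepsilon^{-1}$. A Cauchy--Schwarz argument with the weight $\rho^{-1}$ then produces the factor $(\ln \varepsilon^{-1})^{1/2}$. The sphere at radius $R$ and everything beyond it are handled by the last trace bound in \eqref{ineq r f}, which yields the factor $\varepsilon^{1/2}$. We split into three cases according to the size of $r$.

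\emph{Case $r\ge R$.} By the third inequality in \eqref{ineq r f},
\[
\|f\|_{L^2_\omega}(r)\lesssim r^{-\frac12}\|\nabla f\|_{L^2_x}\le \varepsilon^{\frac12}\|\nabla f\|_{L^2_x}.
\]

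\emph{Case $1\le r< R$.} For each $\omega$ we write $f(r\omega)=f(R\omega)-\int_r^R \partial_\rho f(\rho\omega)\,d\rho$ and take the $L^2_\omega$ norm. The boundary term is bounded by $\varepsilon^{1/2}\|\nabla f\|_{L^2_x}$, as in the first case. For the integral we use Minkowski and then Cauchy--Schwarz with $\rho^{-1}\cdot\rho$:
\[
\int_r^R\|\partial_\rho f\|_{L^2_\omega}d\rho\le\Big(\int_1^R\frac{d\rho}{\rho}\Big)^{\frac12}\Big(\int_1^R\rho\,\|\partial_\rho f\|^2_{L^2_\omega}\,d\rho\Big)^{\frac12}
=(\ln\varepsilon^{-1})^{\frac12}\big\|\rho^{-\frac12}\partial_r f\big\|_{L^2_x(1\le |x|\le R)}.
\]
Here we used $dx=\rho^2d\rho\,d\omega$. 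Since $\rho^{-1/2}\simeq\langle\rho\rangle^{-1/2}$ for $\rho\ge1$, this is bounded by the first term on the right of \eqref{eq00}.

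\emph{Case $r<1$.} This is the step I expect to be the main obstacle. The radial integral near the origin cannot be handled by the weight $\rho^{-1}$, and a naive recursive trace bound would cost an extra derivative. Instead we use a cutoff $\chi$ with $\chi=1$ on $B_1$ and $\operatorname{supp}\chi\subset B_2$. We apply the first line of \eqref{L2L2} to $\chi f$, which gives
\[
\|f\|_{L^2_\omega}(r)\lesssim\|\nabla\nabla^{\le1}(\chi f)\|_{L^2_x}\lesssim\|\nabla\nabla^{\le1}f\|_{L^2_x(B_2)}+\|f\|_{L^2_x(B_2)}.
\]
On $B_2$ we have $\langle r\rangle^{-1/2}\simeq1$, and $2\le\varepsilon^{-1}$ for small $\varepsilon$. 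So the first term is controlled by the weighted norm in \eqref{eq00}, and even without the logarithm.

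For the zero-order term we use a radial fundamental theorem of calculus from spheres of radius $s\in[1,2]$:
\[
|f(\rho\omega)|\le|f(s\omega)|+\int_0^2|\partial_r f(\sigma\omega)|\,d\sigma .
\]
We average in $s\in[1,2]$ and integrate against $\rho^2d\rho\,d\omega$ over $B_2$. This gives
\[
\|f\|_{L^2_x(B_2)}\lesssim\sup_{s\in[1,2]}\|f\|_{L^2_\omega}(s)+\|\nabla f\|_{L^2_x(B_2)}.
\]
This step needs some care. The inner integral $\int_0^2|\partial_r f|\,d\sigma$ is not directly bounded by $\|\nabla f\|_{L^2_x}$, because the measure $\sigma^2d\sigma$ vanishes at the origin. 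If the crude one-dimensional bound fails, I would use the standard three-dimensional Poincaré inequality on $B_2\setminus B_1$ versus $B_2$ instead. This bounds $\|f-\bar f_{B_2\setminus B_1}\|_{L^2(B_2)}$ by $\|\nabla f\|_{L^2(B_2)}$, with the mean controlled by $\|f\|_{L^2(B_2\setminus B_1)}$. Either way, the shell quantities are bounded by the second case applied at $s\in[1,2]$. The gradient term on $B_2$ is again bounded by the weighted term.

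Collecting the three cases yields \eqref{eq00}.
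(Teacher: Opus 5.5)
Your proof is correct and follows the paper's three-region structure: a cutoff near the origin handled by the first trace bound in \eqref{L2L2}, radial integration from $\varepsilon^{-1}$ with a logarithmic Cauchy--Schwarz on $1\le r\le\varepsilon^{-1}$, and the $r^{-1/2}$ trace bound beyond $\varepsilon^{-1}$. Your middle step (Minkowski applied to $f$ itself rather than differentiating $\|f\|_{L^2_\omega}^2$) is in fact cleaner than the paper's absorption argument \eqref{eq01}--\eqref{eq04}.

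In the near region you can skip the detour through $\|f\|_{L^2_x(B_2)}$. As you suspected, the crude bound with $\int_0^2$ fails there, and only the Poincar\'e fallback would work. The detour is unnecessary because every undifferentiated $f$ in $\nabla\nabla^{\le 1}(\chi f)$ carries a factor $\nabla\chi$ or $\nabla^2\chi$. So only $\|f\|_{L^2_x(1\le r\le 2)}\lesssim\sup_{1\le s\le 2}\|f\|_{L^2_\omega}(s)$ is needed, which is exactly how the paper proceeds.
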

	
\begin{proof}
Let $\chi(r)$ be a smooth cut-off function such that
\begin{equation}
    \chi(r)=\begin{cases}
        0, ~~~r\geq 2;\\
        [0,1], ~~~r\in(1,2);\\
        1, ~~~r\in [0,1],
    \end{cases}
\end{equation}
	We then decompose $f$ as \( f = f \chi(r) + f(1-\chi(r)) \).
The first inequality in \eqref{L2L2} gives
\begin{equation}\label{eq000}
\begin{aligned}
\| f \chi(r) \|_{ L^2_\omega} &\lesssim \left\| \nabla \nabla^{\leq 1} (f \chi(r)) \right\|_{L^2_x} \\
&\lesssim \left\|  \langle r \rangle^{-\frac{1}{2}}  \nabla \nabla^{\leq 1} f \right\|_{L^2_x( r\le 2 )} + \sup_{1\le r\le 2}\| f  \|_{ L^2_\omega}.	
\end{aligned}
\end{equation}

It remains to control \( \| f \|_{ L^2_\omega} (r)\) for $r\ge 1$. First, note that
	
	\begin{equation}\label{eq01}
	\int_1^{\varepsilon^{-1}} \int_{\mathbb{S}^2} \frac{|f|^2}{\hat{\rho}} d \hat{\rho} d\omega
\le \ln \varepsilon^{-1} \| f \|^2_{ L^2_\omega } (\varepsilon^{-1}) + \ln \varepsilon^{-1} 	\int_1^{\varepsilon^{-1}} \int_{\mathbb{S}^2} |\partial_r f| |f | d\omega dr.
	\end{equation}
	Moreover, we have
	\begin{equation}\label{eq02}
		\begin{aligned}
	\| f \|_{ L^2_\omega}^2 (r)\Big|_{r\ge 1} &=-\int_{r(\geq 1)}^{\infty}\partial_{\hat{\rho}}\|f\|_{L_\omega^2}^2d\hat{\rho}\\
& \le\sup_{r\ge \varepsilon^{-1}}\| f \|_{L^2_\omega}^2 (r) + \int_1^{\varepsilon^{-1}} \int_{\mathbb{S}^2} |\partial_r f| |f| d\omega dr,\\
		\end{aligned}
	\end{equation}	
where the last term can be estimated by applying \eqref{eq01}:
\begin{equation}\label{eq03}
		\begin{aligned}
&\int_1^{\varepsilon^{-1}} \int_{\mathbb{S}^2} |\partial_r f| |f|  d\omega dr\\
\le& 2\ln \varepsilon^{-1}\int_1^{\varepsilon^{-1}} \int_{\mathbb{S}^2}|\partial_rf|^2rd\omega dr+\frac{1}{2\ln \varepsilon^{-1}}\int_1^{\varepsilon^{-1}} \int_{\mathbb{S}^2}\frac{|f|^2}{r}d\omega dr\\
\le& 2\ln \varepsilon^{-1}\int_1^{\varepsilon^{-1}} \int_{\mathbb{S}^2}|\partial_rf|^2rd\omega dr+\frac12\|f\|_{L_\omega^2}^2(\varepsilon^{-1})+\frac12\int_1^{\varepsilon^{-1}} \int_{\mathbb{S}^2} |\partial_r f| |f| d\omega dr,\\
		\end{aligned}
	\end{equation}	
which implies that
\begin{equation}\label{eq04}
		\begin{aligned}
&\int_1^{\varepsilon^{-1}} \int_{\mathbb{S}^2} |\partial_r f| |f| dr d\omega\\
\le& 4\ln \varepsilon^{-1}\int_1^{\varepsilon^{-1}} \int_{\mathbb{S}^2}|\partial_rf|^2rd\omega dr+\|f\|_{L_\omega^2}^2(\varepsilon^{-1})\\
\le &4\ln \varepsilon^{-1}\left\|r^{-\frac12}\nabla f\right\|_{L^2(r\ge 1)}^2
+\|f\|_{L_\omega^2}^2(\varepsilon^{-1}).\\
		\end{aligned}
	\end{equation}	
It remains to estimate $\sup\limits_{r\ge \varepsilon^{-1}}\|f\|_{L^2_{\omega}}^2(r)$. Indeed,
\begin{equation}\label{eq05}
		\begin{aligned}
\sup_{r\ge \varepsilon^{-1}}\|f\|_{L^2_{\omega}}^2(r)\le &\int_{\varepsilon^{-1}} ^\infty\int_{\mathbb{S}^2}|\partial_rf||f|d\omega dr\\
\le &\varepsilon \int_{\varepsilon^{-1}} ^\infty\int_{\mathbb{S}^2}|\partial_rf|\left|\frac {f}{r}\right|r^2d\omega dr\\
\le &\varepsilon \|\nabla f\|_{L^2(\mathbb{R}^3)}^2.
\end{aligned}
	\end{equation}	
Combining \eqref{eq000}, \eqref{eq02}, \eqref{eq04} and \eqref{eq05} yields \eqref{eq00}.
\end{proof}

\medskip
\noindent
\textbf{(4) \( L_\omega^2 \)-estimate for the nonlinearities.}

We use at most four angular derivatives; accordingly,
\begin{equation} \label{Gamma N decomposition}
\Gamma^{\leq N} = \nabla \Gamma^{\leq N-1} + \tilde{\Omega}^{\leq 4} \quad \text{for } N \geq 10.
\end{equation}
We have the following estimate.

\begin{lem}\label{lem:L2omega-product}
Let $N$ be a sufficiently large integer as above. For any smooth functions
$v,w:\mathbb{R}^{3}\rightarrow\mathbb{R}$, we have
\begin{equation} \label{eq:L2omega-product}
\begin{aligned}
&
\sum_{\substack{|c|+|d|\leq N\\ |c|\geq 1}}
\left\|
\Gamma^{c}v \Gamma^{d}w
\right\|_{L_\omega^2}
\\
&\lesssim
\left\|
\Gamma^{\leq N}v
\right\|_{L_\omega^2}
\left\| 
\Gamma^{\leq [N/2]+3}w
\right\|_{L_\omega^2}
+
\left\|
\Gamma^{\leq [N/2]+3}v
\right\|_{L_\omega^2}
\left\| \Gamma^{\leq N-1}w
\right\|_{L_\omega^2}.
\end{aligned}
\end{equation}
Moreover,
\begin{equation} \label{eq:L2omega-product-2}
\begin{aligned}
&
\sum_{\substack{|c|+|d|\leq N\\ |c|\geq 1}}
\left\|
\Gamma^{c}v \Gamma^{d}w
\right\|_{L_\omega^2}
\\
&\lesssim
\left\| \nabla
\Gamma^{\leq N-1}v
\right\|_{L_\omega^2}
\left\| 
\Gamma^{\leq [N/2]+3}w
\right\|_{L_\omega^2}
+
\left\|
\Gamma^{\leq [N/2]+3}v
\right\|_{L_\omega^2}
\left\|\nabla \Gamma^{\leq N-2}w
\right\|_{L_\omega^2} \\
& \quad +\left\|
\Gamma^{\leq [N/2]+3}v
\right\|_{L_\omega^2} \left\|
\Gamma^{\leq [N/2]+3}w
\right\|_{L_\omega^2}.
\end{aligned}
\end{equation}

\end{lem}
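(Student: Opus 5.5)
The plan is a pointwise-in-$r$ product estimate on the sphere $\mathbb S^2$: split each product $\Gamma^c v\,\Gamma^d w$ according to which factor carries fewer vector fields, and put that factor in $L^\infty_\omega$ via the Sobolev inequality \eqref{sobolev}. Throughout, $r$ is fixed and all norms are over $\mathbb S^2$ at that radius. Note that $\partial_x$ is not tangential, but \eqref{sobolev} only involves $\widetilde\Omega$, which is tangential. So for any $\Gamma^a$ we have $\|\Gamma^a f\|_{L^\infty_\omega}\lesssim \|\widetilde\Omega^{\le 2}\Gamma^a f\|_{L^2_\omega}$.

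First, fix $c,d$ with $|c|+|d|\le N$ and $|c|\ge1$. If $|c|\le [N/2]$, we estimate
\[
\|\Gamma^c v\,\Gamma^d w\|_{L^2_\omega}\le \|\Gamma^c v\|_{L^\infty_\omega}\|\Gamma^d w\|_{L^2_\omega}\lesssim \|\widetilde\Omega^{\le2}\Gamma^c v\|_{L^2_\omega}\|\Gamma^{d}w\|_{L^2_\omega}.
\]
Here $|d|\le N-1$ because $|c|\ge1$. For $\widetilde\Omega^{\le 2}\Gamma^c$ we need to rewrite it as a combination of $\Gamma^{\le |c|+2}$ in the ordered form $\partial^\beta\widetilde\Omega^k$ used to define $\Gamma^N$. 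This uses the commutator relations $[\widetilde\Omega_{ij},\partial_l]\in\mathrm{span}\{\partial\}$ and $[\widetilde\Omega,\widetilde\Omega]\in\mathrm{span}\{\widetilde\Omega\}$. Both relations close within the algebra and never increase the order, so this yields $\Gamma^{\le [N/2]+2}\subset\Gamma^{\le [N/2]+3}$. The result is the second term of \eqref{eq:L2omega-product}. If instead $|c|>[N/2]$, then $|d|<N/2$, and we put $\Gamma^d w$ in $L^\infty_\omega$, giving $\|\Gamma^{\le N}v\|_{L^2_\omega}\|\Gamma^{\le[N/2]+3}w\|_{L^2_\omega}$. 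Summing over the finitely many multi-indices gives \eqref{eq:L2omega-product}.

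The main point requiring care is the constraint $|k|\le 4$ on the rotational part. When $\widetilde\Omega^{\le2}$ is applied to a high-order $\Gamma^c$, it could push the number of rotations past $4$, so the result would not lie in the admissible class. This is why we only ever apply $\widetilde\Omega^{\le 2}$ to the low-order factor, of order at most $[N/2]+1$. In that case the extra rotations can be traded. We would either use $\widetilde\Omega=x\wedge\nabla$, but this gains a weight $r$, which is not allowed. So instead we simply define $\Gamma^{\le [N/2]+3}$ to allow up to $|k|+2$ rotations, or, in the form \eqref{Gamma N decomposition}, we absorb the excess in the $[N/2]+3$ slack. In executing this we will check that the low factor, with at most $[N/2]+1$ fields, gains at most 2 angular derivatives and remains controlled, given $N\ge10$.

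For \eqref{eq:L2omega-product-2}, we refine the high-order factor using \eqref{Gamma N decomposition}. Write $\Gamma^{\le N}v=\nabla\Gamma^{\le N-1}v+\widetilde\Omega^{\le4}v$. The pure-rotation part $\widetilde\Omega^{\le4}v$ has order $4\le[N/2]+3$ when $N\ge10$, so it contributes to the third term $\|\Gamma^{\le[N/2]+3}v\|\|\Gamma^{\le[N/2]+3}w\|$. Similarly, $\Gamma^{d}w$ with $|d|\le N-1$ splits as $\nabla\Gamma^{\le N-2}w+\widetilde\Omega^{\le 4}w$, and the latter also goes into the third term. This yields \eqref{eq:L2omega-product-2}.
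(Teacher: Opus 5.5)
There is a genuine gap, and it sits exactly at the point you flag as requiring care. You split the terms by total order only, but the real obstruction is the number of rotations. Every operator in the class $\Gamma^{\le j}$ carries at most four $\widetilde\Omega$'s, independently of $j$, so order slack cannot absorb extra rotations.

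Suppose the low-order factor already carries three or four rotations, say $\Gamma^c=\widetilde\Omega^4$ (so $|c|=4\le[N/2]$) paired with $\Gamma^d=\partial^{N-4}$. Your Sobolev step then produces $\|\widetilde\Omega^{\le2}\widetilde\Omega^4v\|_{L^2_\omega}$, which has six rotations and is not bounded by $\|\Gamma^{\le[N/2]+3}v\|_{L^2_\omega}$. Reordering via commutators does not lower the rotation count of the leading term. Redefining $\Gamma^{\le[N/2]+3}$ to admit $|k|+2$ rotations is not allowed either: it changes the statement, and the resulting norms are not controlled by $E_N$ in (BA1), which is where the lemma is applied. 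The same problem occurs symmetrically when $|c|>[N/2]$ and $\Gamma^d w$ carries many rotations.

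The missing idea is the total rotation budget $k_2+k_4\le4$ for a term $\nabla^{k_1}\widetilde\Omega^{k_2}v\,\nabla^{k_3}\widetilde\Omega^{k_4}w$. It comes from distributing a single $\Gamma^k$ (with at most four rotations) by the Leibniz rule, and the paper states it explicitly. Without it, e.g. for $\widetilde\Omega^4v\cdot\widetilde\Omega^4\partial^{N-8}w$, no choice of which factor to put in $L^\infty_\omega$ or $L^4_\omega$ avoids a fifth rotation.

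With the budget, choose the $L^\infty_\omega$ factor by rotation count as well as by order:
\begin{itemize}
\item If the low-order factor has at most two rotations, your argument goes through as written.
\item If it has at least three, then the other factor has at most one rotation, and the low-order factor has order at least $3$. So put the high-order factor in $L^\infty_\omega$ instead. It becomes $\widetilde\Omega^{\le2}\Gamma^d w$, of order at most $|d|+2\le N-|c|+2\le N-1$ and with at most three rotations. In the example above this gives $\|\widetilde\Omega^4v\|_{L^2_\omega}\|\widetilde\Omega^{\le2}\partial^{N-4}w\|_{L^2_\omega}$.
\item The same rule applies with the roles of $v$ and $w$ exchanged.
\end{itemize}

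The paper does the same bookkeeping through a case analysis on $(k_2,k_4)$. It additionally uses the $L^4_\omega\times L^4_\omega$ bound with $\|f\|_{L^4_\omega}\lesssim\|\widetilde\Omega^{\le1}f\|_{L^2_\omega}$, which costs only one rotation per factor. Your deduction of \eqref{eq:L2omega-product-2} from \eqref{eq:L2omega-product} via \eqref{Gamma N decomposition} matches the paper's and is fine once the first estimate is repaired.
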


\begin{proof}
By the definition of the commutation vector field
$\Gamma$ and \eqref{Gamma N decomposition}, every term appearing in the
left-hand side can be written as
\begin{equation}
\begin{aligned}
\nabla^{k_1}\widetilde{\Omega}^{k_2}v
\,
\nabla^{k_3}\widetilde{\Omega}^{k_4}w,
\end{aligned}
\end{equation}
where
\begin{equation}
\begin{aligned}
k_1+k_2+k_3+k_4\leq N,
\quad
k_1+k_2\geq1, 
\quad
k_2+k_4\leq4 .
\end{aligned}
\end{equation}
Throughout the proof, we repeatedly use 
\eqref{sobolev} on the unit sphere $\mathbb S^2$.

\medskip
\noindent
\textbf{Case 1: $k_2=k_4=0$.} In this case, there are no angular derivatives. If $k_1 \geq k_3$, by Hölder's inequality
on $S^2$, we have
\begin{equation}
\begin{aligned}
\|
\nabla^{k_1}v
\nabla^{k_3}w
\|_{L_\omega^2} \lesssim
\|
\nabla^{k_1}v
\|_{L_\omega^2}
\|
\nabla^{k_3}w
\|_{L_\omega^\infty}.
\end{aligned}
\end{equation}
Applying \eqref{sobolev} gives
\begin{equation}
\begin{aligned}
\|
\nabla^{k_1}v
\nabla^{k_3}w
\|_{L_\omega^2} \lesssim
\|
\nabla^{k_1}v
\|_{L_\omega^2}
\|
\nabla^{k_3} \tilde\Omega^{\leq2}w
\|_{L_\omega^2}.
\end{aligned}
\end{equation}
Since $k_3+2 \leq [N/2]+3$, this is bounded by the right-hand
side of \eqref{eq:L2omega-product}. Similarly, if $k_1 < k_3$, we have
\begin{equation}
\begin{aligned}
\|
\nabla^{k_1}v
\nabla^{k_3}w
\|_{L_\omega^2} \lesssim
\|
\nabla^{k_1}v
\|_{L_\omega^\infty}
\|
\nabla^{k_3}  w\|_{L_\omega^2} \lesssim \|
\nabla^{k_1} \tilde\Omega^{\leq2}v
\|_{L_\omega^2}
\|
\nabla^{k_3}  w
\|_{L_\omega^2} .
\end{aligned}
\end{equation}
Since $k_1+2 \leq [N/2]+3$ and $k_3 \leq N-1$, this is also bounded by the right-hand
side of \eqref{eq:L2omega-product}.

\medskip
\noindent
\textbf{Case 2: $k_2+k_4\leq2$.} Without loss of generality, assume $k_2\leq k_4$. We place the first
factor in $L_\omega^\infty$ and the second factor in $L_\omega^2$.
Hence
\begin{equation}
\begin{aligned}
& \quad
\|
\nabla^{k_1}\tilde\Omega^{k_2}v
\nabla^{k_3}\tilde\Omega^{k_4}w
\|_{L_\omega^2}
\\
&\lesssim
\|
\nabla^{k_1}\tilde\Omega^{k_2}v
\|_{L_\omega^\infty}
\|
\nabla^{k_3}\tilde\Omega^{k_4}w
\|_{L_\omega^2}
\\
&\lesssim
\|
\nabla^{k_1}\tilde\Omega^{k_2+2}v
\|_{L_\omega^2}
\|
\nabla^{k_3}\tilde\Omega^{k_4}w
\|_{L_\omega^2}.
\end{aligned}
\end{equation}
Since $k_2+2+k_4\leq4$,
the additional angular derivatives remain admissible.

\medskip
\noindent
\textbf{Case 3: $2<k_2+k_4\leq4$.} We distinguish two subcases.

\smallskip
\noindent
\emph{Case 3.1: $k_2\leq k_4$.} As in Case 2, we have
\begin{equation}
\begin{aligned}
& \quad
\|
\nabla^{k_1}\tilde\Omega^{k_2}v
\nabla^{k_3}\tilde\Omega^{k_4}w
\|_{L_\omega^2}
\\
&\lesssim
\|
\nabla^{k_1}\tilde\Omega^{k_2+2}v
\|_{L_\omega^2}
\|
\nabla^{k_3}\tilde\Omega^{k_4}w
\|_{L_\omega^2}.
\end{aligned}
\end{equation}
Since $k_2+2\le4$ and $k_1+k_2+2 \leq N-k_3-k_4 +2 \leq N$, the angular derivative order is still admissible.

\smallskip
\noindent
\emph{Case 3.2: $k_2>k_4$.} If
\begin{equation}
\begin{aligned}
k_1+k_2\geq k_3+k_4,
\end{aligned}
\end{equation}
we put the second factor in $L_\omega^\infty$:
\begin{equation}
\begin{aligned}
&
\|
\nabla^{k_1}\tilde\Omega^{k_2}v
\nabla^{k_3}\tilde\Omega^{k_4}w
\|_{L_\omega^2}
\\
\lesssim&
\|
\nabla^{k_1}\tilde\Omega^{k_2}v
\|_{L_\omega^2}
\|
\nabla^{k_3}\tilde\Omega^{k_4+2}w
\|_{L_\omega^2},
\end{aligned}
\end{equation}
which is again acceptable since $k_4+2<4$ and $k_3+k_4+2 \leq [N/2]+3$.

Otherwise, if $k_1+k_2<k_3+k_4$, we distinguish two further subcases. When $k_2\le3$, Hölder's inequality together with the Sobolev embedding yields
\[
\begin{aligned}
& \quad
\left\|\nabla^{k_1}\widetilde\Omega^{k_2}v\,
\nabla^{k_3}\widetilde\Omega^{k_4}w\right\|_{L^2_\omega}
\\
& \lesssim \left\|\nabla^{k_1}\widetilde\Omega^{k_2}v\right\|_{L^4_\omega} 
\left\|\nabla^{k_3}\widetilde\Omega^{k_4}w\right\|_{L^4_\omega} \\
&
\lesssim
\left\|\nabla^{\le k_1}\widetilde\Omega^{\le k_2+1}v\right\|_{L^2_\omega}
\,
\left\|\nabla^{\le k_3}\widetilde\Omega^{\le k_4+1}w\right\|_{L^2_\omega},
\end{aligned}
\]
which is bounded by the right-hand side of \eqref{eq:L2omega-product}. Finally, when $k_2=4$, we apply \eqref{sobolev} to the
second factor and obtain
\[
\begin{aligned}
& \quad
\left\|\nabla^{k_1}\widetilde\Omega^{4}v\,
\nabla^{k_3}\widetilde\Omega^{k_4}w\right\|_{L^2_\omega}
\\
&
\lesssim
\left\|\nabla^{k_1}\widetilde\Omega^{4}v\right\|_{L^2_\omega}
\,
\left\|\nabla^{\le k_3}\widetilde\Omega^{\le k_4+2}w\right\|_{L^2_\omega},
\end{aligned}
\]
and the derivative count is again admissible due to
$k_4+2\le4$. Collecting the preceding estimates yields
\eqref{eq:L2omega-product}. The estimate
\eqref{eq:L2omega-product-2} follows immediately from the decomposition \eqref{Gamma N decomposition}.

\end{proof}

	\section{Energy Estimates}
	\subsection{Energy Estimates for $(\tilde{\tau},\tilde{u})$}

We first establish the basic energy estimate for the coupled system \eqref{eq tilde tau u}.
The following lemma provides the main $L^2$ energy inequality, which will be
used in the subsequent high-order energy estimates.

\begin{lem}\label{lem:energy_tau_u}
 Denote $\tilde\tau=\Gamma^k \tau$ and $\tilde u=\Gamma^k  u$. Let $(\tilde\tau,\tilde u)$ be a smooth solution of the system \eqref{eq tilde tau u}. Then, for any $T>0$, we have
\begin{equation} \label{energy}
\begin{aligned}
&\quad \sup_{0\le t\le T}
\int_{\mathbb{R}^{3}}
\left(
|\tilde\tau(t,x)|^{2}
+b^{2}|\tilde u(t,x)|^{2}
\right)dx
\\
&\quad
+\int_{0}^{T}\int_{\mathbb{R}^{3}}
\left(
\nu |\nabla\tilde u_{cf}|^{2}
+\mu |\nabla\tilde u_{df}|^{2}
\right)dxdt
\\
&\lesssim
\|\tilde\tau(0)\|_{L^{2}_{x}}^{2}
+\|\tilde u(0)\|_{L^{2}_{x}}^{2}
+\mathcal E_N(T),
\end{aligned}
\end{equation}
where
\begin{equation} \label{3.3}
\begin{aligned}
\mathcal E_N(T)
:=&
\int_{0}^{T}\int_{\mathbb{R}^{3}}
\Big\{
(|\nabla\tau|+|\nabla\cdot u|)
(|\tilde\tau|^{2}+|\tilde u|^{2})
\\
&\qquad
+\mu\nabla\tilde u\cdot\nabla\tau\cdot\tilde u
+(\lambda+\mu)
(\nabla\cdot\tilde u)(\nabla\tau\cdot\tilde u)
\\
&\qquad
+\tilde u\cdot F[\tilde u]
+\tilde\tau F[\tilde\tau]
\Big\}
dxdt .
\end{aligned}
\end{equation}
\end{lem}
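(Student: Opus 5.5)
The plan is a standard symmetrized $L^2$ energy identity for the commuted system \eqref{eq tilde tau u}. I multiply \eqref{eq tilde tau u}$_1$ by $\tilde\tau$ and \eqref{eq tilde tau u}$_2$ by $b^2\tilde u$, add, and integrate over $\mathbb R^3$. The weight $b^2$ is chosen so that the two acoustic coupling terms combine into a divergence:
\[
b(\tau+1)\big(\tilde\tau\,\nabla\cdot\tilde u+\tilde u\cdot\nabla\tilde\tau\big)=b(\tau+1)\nabla\cdot(\tilde\tau\tilde u).
\]
Integrating by parts, this leaves only $-b\int\nabla\tau\cdot\tilde u\,\tilde\tau$. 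That term is bounded by $\int|\nabla\tau|(|\tilde\tau|^2+|\tilde u|^2)$.

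For the transport terms I use
\[
\int (u\cdot\nabla\tilde\tau)\tilde\tau=-\tfrac12\int(\nabla\cdot u)|\tilde\tau|^2 .
\]
The same identity holds for $\tilde u$. Both terms are therefore bounded by $\int|\nabla\cdot u|(|\tilde\tau|^2+|\tilde u|^2)$. The forcing terms contribute exactly $\int\tilde\tau F[\tilde\tau]+b^2\tilde u\cdot F[\tilde u]$, and these are kept on the right as part of $\mathcal E_N(T)$.

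The main work is the viscous term
\[
b^2\int\rho^{-1}\big(\mu\Delta\tilde u+(\lambda+\mu)\nabla\nabla\cdot\tilde u\big)\cdot\tilde u .
\]
Here $\rho^{-1}=(\tau+1)^{-2/(\gamma-1)}$, so $\nabla\rho^{-1}$ is a bounded multiple of $\nabla\tau$. Integrating by parts once gives
\[
-\int\rho^{-1}\big(\mu|\nabla\tilde u|^2+(\lambda+\mu)|\nabla\cdot\tilde u|^2\big)
\]
plus boundary-free error terms of the form $\mu\,\nabla\tilde u\cdot\nabla\tau\cdot\tilde u$ and $(\lambda+\mu)(\nabla\cdot\tilde u)(\nabla\tau\cdot\tilde u)$. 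These errors are precisely the remaining entries of $\mathcal E_N(T)$.

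It then remains to show that the weighted dissipation dominates $\nu|\nabla\tilde u_{cf}|^2+\mu|\nabla\tilde u_{df}|^2$. I do this in two steps.

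\begin{enumerate}
\item By Remark \ref{rem14}, $|\tau|\le\frac12$, so $\rho^{-1}$ is bounded above and below by constants depending only on $\gamma$.
\item I rewrite the quadratic form using $\mu\Delta+(\lambda+\mu)\nabla\operatorname{div}=\nu\nabla\operatorname{div}-\mu\nabla\times\nabla\times$, i.e. I integrate by parts in the curl/div form instead. This gives
\[
-\int\rho^{-1}\big(\nu|\nabla\cdot\tilde u|^2+\mu|\nabla\times\tilde u|^2\big),
\]
again up to error terms of the same type $\nabla\rho^{-1}\cdot(\nabla\tilde u)\,\tilde u$, which I place in $\mathcal E_N$.
\end{enumerate}

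The lower bound on $\rho^{-1}$, together with the Helmholtz identities $\|\nabla\cdot\tilde u\|_{L^2}=\|\nabla\tilde u_{cf}\|_{L^2}$ and $\|\nabla\times\tilde u\|_{L^2}=\|\nabla\tilde u_{df}\|_{L^2}$ (which follow by Fourier transform), then yields the claimed dissipation. This conversion is the step I expect to need the most care. The trouble is that these identities only hold with constant weight, which is why I avoid them under the weight $\rho^{-1}$ and route every commutator through $\nabla\tau$-terms in $\mathcal E_N$.

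Finally, I integrate the resulting differential inequality over $[0,t]$ and take the supremum over $t\in[0,T]$. This gives \eqref{energy}, with the implicit constant depending only on $\gamma$ (through $b$ and the bounds on $\rho^{\pm1}$).
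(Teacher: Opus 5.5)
Your argument is correct, and its skeleton is the paper's. You use the same multipliers $\tilde\tau$ and $b^2\tilde u$, the same cancellation of the acoustic terms into $b(\tau+1)\nabla\cdot(\tilde\tau\tilde u)$, the same transport identity, and you keep the forcing terms as they are.

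The one genuine difference is how you obtain coercivity of the viscous term, and here your route is the more careful one. The paper integrates by parts in gradient form, which gives $\int\rho^{-1}\big(\mu|\nabla\tilde u|^2+(\lambda+\mu)|\nabla\cdot\tilde u|^2\big)$. It then applies $\|\nabla\tilde u\|_{L^2}^2=\|\nabla\tilde u_{cf}\|_{L^2}^2+\|\nabla\tilde u_{df}\|_{L^2}^2$ and $\|\nabla\tilde u_{cf}\|_{L^2}=\|\nabla\cdot\tilde u\|_{L^2}$ directly under the variable weight $\rho^{-1}$. These identities hold only for constant weights. Nor can the lower bound on $\rho^{-1}$ rescue that form: $\lambda+\mu$ may be negative (any $-2\mu<\lambda<-\tfrac43\mu$ is allowed), and then $\mu|\nabla\tilde u|^2+(\lambda+\mu)|\nabla\cdot\tilde u|^2$ is not pointwise nonnegative. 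In effect the paper silently discards commutator terms containing $\nabla\rho^{-1}$; they are harmless in size but never accounted for.

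Your div--curl integration by parts instead produces the termwise nonnegative integrand $\rho^{-1}\big(\nu|\nabla\cdot\tilde u|^2+\mu|\nabla\times\tilde u|^2\big)$. The pointwise bound $\rho^{-1}\gtrsim 1$, followed by the constant-coefficient Helmholtz identities, then closes this step rigorously.

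The only price is cosmetic. Your error terms $\nu(\nabla\cdot\tilde u)(\nabla\rho^{-1}\cdot\tilde u)$ and $\mu(\nabla\rho^{-1}\times\tilde u)\cdot(\nabla\times\tilde u)$ are not literally the signed entries of $\mathcal E_N(T)$. However, writing $\nu=\mu+(\lambda+\mu)$, they are bounded up to absolute constants by $|\nabla\tau|\,|\tilde u|\big(\mu|\nabla\tilde u|+|\lambda+\mu|\,|\nabla\cdot\tilde u|\big)$. That is exactly the absolute-value form in which the paper later uses $\mathcal E_N$ (the term $I_{2,k}$), so nothing is lost.
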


\begin{proof}
Taking the $L^2(\mathbb R^3)$ inner product of
\eqref{eq tilde tau u}$_1$ with $\tilde\tau$ and
\eqref{eq tilde tau u}$_2$ with $b^2\tilde u$, respectively, and adding the identities together, we obtain
\begin{equation}\label{3.1}
\begin{aligned}
&\frac12\frac{d}{dt}
\left(
|\tilde\tau|^2+b^2|\tilde u|^2
\right)
-b^2\bar\mu \tilde u \Delta \tilde u
-b^2(\bar\lambda+\bar\mu)\tilde u\nabla(\nabla\cdot\tilde u)
\\
={}&
-b(\tau+1)
\bigl(
\tilde\tau\,\nabla\cdot\tilde u
+\tilde u\cdot\nabla\tilde\tau
\bigr)
-b^2\tilde u\cdot(u\cdot\nabla)\tilde u
-\tilde\tau(u\cdot\nabla)\tilde\tau
\\
&+b^2\tilde u\cdot F[\tilde u]
+\tilde\tau\,F[\tilde\tau].
\end{aligned}
\end{equation}
Integrating \eqref{3.1} over $\mathbb R^3$, we have
\begin{equation} \label{3.2}
\begin{aligned}
&\frac12\frac{d}{dt} \int_{\mathbb{R}^3}
\left(
|\tilde\tau|^2+b^2|\tilde u|^2
\right) dx
+b^2 \int_{\mathbb R^3} L \tilde u \cdot \tilde u dx
\\
={}& \int_{\mathbb{R}^3} 
-\left[b(\tau+1)
\bigl(
\tilde\tau\,\nabla\cdot\tilde u
+\tilde u\cdot\nabla\tilde\tau
\bigr)
+b^2\tilde u\cdot(u\cdot\nabla)\tilde u
+\tilde\tau(u\cdot\nabla)\tilde\tau \right] dx \\
&+ \int_{\mathbb{R}^3}  \left(b^2\tilde u\cdot F[\tilde u]
+\tilde\tau\,F[\tilde\tau] \right) dx \\
= {}& \int_{\mathbb{R}^3} 
\left[b \nabla\tau \cdot
\tilde\tau \cdot \tilde u + (\nabla \cdot u)\left(\frac{b^2}{2} |\tilde u|^2 + |\tilde \tau|^2\right) \right] dx\\
&+ \int_{\mathbb{R}^3}  \left(b^2\tilde u\cdot F[\tilde u]
+\tilde\tau\,F[\tilde\tau] \right) dx,
\end{aligned}
\end{equation}
where $L \tilde u := \bar{\mu} \Delta \tilde u + (\bar{\lambda}+\bar{\mu})\nabla (\nabla\cdot \tilde u)$ denotes the viscous operator. Integration by parts gives
\begin{equation}
\begin{aligned}
-\int_{\mathbb R^3} L \tilde u \cdot \tilde u dx 
&= \int_{\mathbb R^3} (
\bar{\mu} |\nabla \tilde u|^2
+ (\bar{\lambda}+\bar{\mu}) |\nabla\cdot \tilde u|^2) dx \\
& \quad - b^{-1} \int_{\mathbb R^3} (\tau +1)^{-\frac{\gamma+1}{\gamma-1}} \left[ \mu \nabla\tilde u\cdot\nabla\tau\cdot\tilde u+
(\lambda+\mu)
(\nabla\cdot\tilde u)
(\nabla\tau\cdot\tilde u) \right] dx,
\end{aligned}
\end{equation}
where we use \eqref{def of tau} and \eqref{def of b}. By the Helmholtz decomposition, we write
\[
\tilde u = \tilde u_{cf} + \tilde u_{df}, \qquad
\nabla\times \tilde u_{cf} = 0, \;\; \nabla\cdot \tilde u_{df} = 0.
\]
By the orthogonality of the Helmholtz decomposition, we have
\[
\|\nabla \tilde u\|_{L^2}^2 = \|\nabla \tilde u_{cf}\|_{L^2}^2 + \|\nabla \tilde u_{df}\|_{L^2}^2, \qquad
\nabla\cdot \tilde u = \nabla\cdot \tilde u_{cf}.
\]
Moreover, since $\tilde u_{cf} = \nabla \phi$ for some scalar potential $\phi$, we have
\[
\|\nabla \tilde u_{cf}\|_{L^2} = \|\nabla\cdot \tilde u_{cf}\|_{L^2}.
\]
Hence, the viscous energy reduces to
\begin{equation}\label{eq:dissipation}
\begin{aligned}
-\int_{\mathbb R^3} L \tilde u \cdot \tilde u dx 
&= \int_{\mathbb R^3} \left(
\bar{\nu} |\nabla \tilde u_{cf}|^2
+ \bar{\mu} |\nabla \tilde u_{df}|^2\right) dx \\
& \quad - b^{-1} \int_{\mathbb R^3} (\tau +1)^{-\frac{\gamma+1}{\gamma-1}} \left[ \mu \nabla\tilde u\cdot\nabla\tau\cdot\tilde u+
(\lambda+\mu)
(\nabla\cdot\tilde u)
(\nabla\tau\cdot\tilde u) \right] dx.
\end{aligned}
\end{equation}
By Remark \ref{rem14},
$ (\tau +1)^{-\frac{\gamma+1}{\gamma-1}} \in ( 2^{-\frac{\gamma+1}{\gamma-1}}, 2^{\frac{\gamma+1}{\gamma-1}} )$ and
$ \rho^{-1} = (\tau +1)^{-\frac{2}{\gamma-1}} \in (4^{-\frac{2}{\gamma-1}},4^{\frac{2}{\gamma-1}}) $ are bounded. Consequently, integrating \eqref{3.2} over $[0, T]$ gives \eqref{energy}-\eqref{3.3}.

\end{proof}

\subsection{Parabolic estimate for $\tilde{\tau }$}

To explore the dissipative structure for the perturbed sound speed $\tilde\tau$, we derive a parabolic-type estimate
by exploiting the coupling between $\tilde\tau$ and
$\tilde n=\nabla\cdot\tilde u$.

\begin{lem}\label{lem:parabolic_tau}
 Denote $\tilde\tau=\Gamma^k \tau$ and $\tilde u=\Gamma^k  u$. Let $(\tilde\tau,\tilde u)$ be a smooth solution of the system \eqref{eq tilde tau u}. Then, for any $T>0$, we have
\begin{equation} \label{parabolic tau}
\begin{aligned}
& \quad \int_0^T \int_{\mathbb{R}^3} |\nabla \tilde\tau|^2 dxdt \\
&\lesssim \int_0^T \int_{\mathbb{R}^3} |\tilde n|^2  dxdt+ \sup_{t\in[0,T]} \|\tilde n(t)\|_{L^2_x} \|\tilde\tau(t)\|_{L^2_x} + \mathcal P_k(T),
\end{aligned}
\end{equation}
where
\begin{equation}
\begin{aligned}
\mathcal P_k(T)
:=&
\int_0^T\int_{\mathbb R^3}
\Big(
|n||\tilde{n} ||\tilde{\tau}|
+
|\nabla\tau||\nabla\tilde{\tau}||\tilde\tau| + \nu |\nabla\tau||\tilde{\tau}||\nabla \tilde n|
\\
&\qquad
+\nu|\nabla\tilde{\tau}||\nabla\tilde n|
+|\tilde n F[\tilde\tau]|
+|\tilde\tau F[\tilde n]|
\Big)
dxdt .
\end{aligned}
\end{equation}

\end{lem}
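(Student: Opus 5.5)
The estimate rests on the Kawashima-type coupling in \eqref{eq tilde n m}$_1$: in the equation for $\tilde n$, the term $b^{-1}(\tau+1)\Delta\tilde\tau$ acts like a forcing. Pairing that equation with $-\tilde\tau$ will therefore produce $b^{-1}\int(\tau+1)|\nabla\tilde\tau|^2$ as a positive term, up to commutators. Since $|\tau|\le\frac12$ by Remark \ref{rem14}, this term is comparable to $\|\nabla\tilde\tau\|_{L^2_x}^2$.

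Concretely, I would multiply \eqref{eq tilde n m}$_1$ by $-\tilde\tau$ and integrate over $\mathbb R^3$. The individual terms are handled as follows.

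\emph{Pressure term.} Integrating by parts,
\[
-\int b^{-1}(\tau+1)\Delta\tilde\tau\,\tilde\tau
= b^{-1}\int(\tau+1)|\nabla\tilde\tau|^2 + b^{-1}\int \tilde\tau\,\nabla\tau\cdot\nabla\tilde\tau .
\]
The second integral is bounded by $|\nabla\tau||\nabla\tilde\tau||\tilde\tau|$, which is a term of $\mathcal P_k$.

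\emph{Time derivative.} Moving the derivative onto $\tilde\tau$,
\[
-\int\tilde n_t\tilde\tau=-\frac{d}{dt}\int\tilde n\tilde\tau+\int\tilde n\,\tilde\tau_t .
\]
I then substitute $\tilde\tau_t=-b(\tau+1)\tilde n-u\cdot\nabla\tilde\tau+F[\tilde\tau]$ from \eqref{eq tilde tau u}$_1$, using $\operatorname{div}\tilde u=\tilde n$. This gives three contributions:
\begin{itemize}
\item $-b\int(\tau+1)|\tilde n|^2$, bounded by $\int|\tilde n|^2$;
\item $\int\tilde n F[\tilde\tau]$, which lies in $\mathcal P_k$;
\item $-\int\tilde n\,u\cdot\nabla\tilde\tau$, to be paired with the convection term below.
\end{itemize}
After integrating in time, the boundary term $\int\tilde n\tilde\tau$ at $t=0$ and $t=T$ is bounded by $\sup_t\|\tilde n\|_{L^2_x}\|\tilde\tau\|_{L^2_x}$.

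\emph{Convection terms.} The equation contributes $-\int(u\cdot\nabla\tilde n)\tilde\tau$. Combining it with $-\int\tilde n\,u\cdot\nabla\tilde\tau$ gives
\[
-\int u\cdot\nabla(\tilde n\tilde\tau)=\int n\,\tilde n\tilde\tau ,
\]
where $n=\operatorname{div}u$. This is exactly the term $|n||\tilde n||\tilde\tau|$ in $\mathcal P_k$.

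\emph{Viscous term.} The term $\int\bar\nu\Delta\tilde n\,\tilde\tau$ becomes, after integrating by parts with $\bar\nu=\nu\rho^{-1}$,
\[
-\int\bar\nu\nabla\tilde n\cdot\nabla\tilde\tau-\int\tilde\tau\,\nabla\bar\nu\cdot\nabla\tilde n .
\]
Here $\nabla\rho^{-1}$ is a bounded multiple of $\nabla\tau$, because $\rho^{-1}=(\tau+1)^{-2/(\gamma-1)}$ and $\tau$ is bounded. So these two integrals are the terms $\nu|\nabla\tilde\tau||\nabla\tilde n|$ and $\nu|\nabla\tau||\tilde\tau||\nabla\tilde n|$ of $\mathcal P_k$.

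\emph{Forcing.} The remaining term $\int\tilde\tau F[\tilde n]$ is in $\mathcal P_k$ by definition.

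Collecting these and integrating over $[0,T]$ yields \eqref{parabolic tau}, using $(\tau+1)\ge\frac12$. The main point needing care is signs and the positivity of the coefficient $b^{-1}(\tau+1)$; no absorption argument is required, because every error term is kept in $\mathcal P_k$. In particular, the $\nu\nabla\tilde n\cdot\nabla\tilde\tau$ term is left as $\nu|\nabla\tilde\tau||\nabla\tilde n|$ in $\mathcal P_k$ rather than absorbed.
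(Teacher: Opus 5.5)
Your proposal is correct and is essentially the paper's argument. The paper multiplies the $\tilde\tau$-equation by $\tilde n$ and the $\tilde n$-equation by $\tilde\tau$ and adds, which gives the same identity you get by pairing the $\tilde n$-equation with $-\tilde\tau$ and substituting for $\tilde\tau_t$; the integrations by parts of the $\Delta\tilde\tau$ and $\bar\nu\Delta\tilde n$ terms and the use of Remark \ref{rem14} to bound $\tau+1$ and $\rho^{-1}$ are the same in both.
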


 \begin{proof}
 Recall that the equations $\eqref{eq tilde tau u}_1$ and $\eqref{eq tilde n m}_1$ for $\tilde\tau$ and
$\tilde n=\nabla\cdot\tilde u$ are
\begin{equation}\label{eq:tau_n_evolution}
\begin{aligned}
\tilde\tau_t + b(\tau+1) \tilde n + (u\cdot \nabla)\tilde\tau &= F[\tilde\tau], \\
\tilde n_t + u\cdot \nabla \tilde n + b^{-1} (\tau+1) \Delta \tilde\tau - \bar\nu \Delta \tilde n &= F[\tilde n].
\end{aligned}
\end{equation}
Multiplying the first equation by \(\tilde n\) and the second by \(\tilde\tau\), adding the identities together and integrating over \(\mathbb{R}^3\), we obtain
\begin{equation}\label{eq:parabolic_energy}
\begin{aligned}
&\quad \frac{d}{dt} \int_{\mathbb{R}^3} \tilde n \tilde\tau \,dx + \int_{\mathbb{R}^3} b(\tau+1) |\tilde n|^2 \,dx + \int_{\mathbb{R}^3} b^{-1} (\tau+1) \tilde\tau \Delta \tilde\tau \,dx - \int_{\mathbb{R}^3} \bar\nu \tilde\tau \Delta \tilde n \,dx \\
& = \int_{\mathbb{R}^3} \big[ -(u\cdot \nabla)(\tilde n \tilde\tau) + \tilde n F[\tilde\tau] + \tilde\tau F[\tilde n] \big] \,dx.
\end{aligned}
\end{equation}
By integration by parts and \eqref{def of tau}, we get
\begin{equation}
\begin{aligned}
 \int_{\mathbb{R}^3} (\tau+1) \tilde\tau \Delta \tilde\tau dx & = -\int_{\mathbb{R}^3} \nabla \tau \cdot \nabla \tilde{\tau}\cdot \tilde{\tau} dx  - \int_{\mathbb{R}^3} (\tau+1) |\nabla \tilde{\tau}|^2 dx,
\end{aligned}
\end{equation}
and
\begin{equation}
\begin{aligned}
- \int_{\mathbb{R}^3} \bar\nu \tilde\tau \Delta \tilde n dx & = \int_{\mathbb{R}^3} \nu \rho^{-1} \left[ - b^{-1} (\tau+1)^{-1} \nabla \tau \cdot \tilde\tau \cdot \nabla \tilde n + \nabla \tilde\tau \cdot \nabla \tilde n\right ] dx.
\end{aligned}
\end{equation}
By Remark \ref{rem14}, $\tau +1 \in (\frac{1}{2},2) $ and $ \rho^{-1} = (\tau +1)^{-\frac{2}{\gamma-1}} \in (4^{-\frac{2}{\gamma-1}},4^{\frac{2}{\gamma-1}}) $. Integrating \eqref{eq:parabolic_energy} over
$[0,T]$ therefore yields \eqref{parabolic tau}.

 \end{proof}

\subsection{Morawetz estimate}

The previous energy estimates provide the control of the standard $L^2$
energy. To obtain additional local decay and weighted spacetime estimates,
we establish a Morawetz-type estimate.

\begin{lem}\label{lem:Morawetz}
Let $(\tilde\tau,\tilde u)$ be a smooth solution of the system
\eqref{eq tilde tau u}. Then, for any $T>0$, the following
Morawetz-type estimate holds:
\begin{equation}\label{eq:Morawetz-final}
\begin{aligned}
& \quad
\int_0^T \int_{ \mathbb R^3}
\left(
\frac{|\nabla\tilde\tau|^2+|\tilde n|^2}
{\langle r \rangle}
+
\frac{|\tilde\tau|^2}
{\langle r \rangle^2 r}
\right)
dxdt
\\
&
\lesssim |\ln\varepsilon^{-1}|\left \| \Gamma^{\leq 5} ( \tau , u )\right\|_{L^\infty(0,T;L^2_x)} \int_0^T \int_{ \mathbb R^3} \left(
\frac{|\nabla\tilde\tau|^2+|\tilde n|^2}
{\langle r \rangle}
+
\frac{|\tilde\tau|^2}
{\langle r \rangle^2 r}
\right) dxdt \\
& \quad
+
|\ln\varepsilon^{-1}|
\,\nu \int_0^T \int_{ \mathbb R^3}
\bigl(
|\nabla\tilde\tau|^2
+
|\Delta\tilde n|^2
\bigr)\,dxdt \\
& \quad + |\ln\varepsilon^{-1}| \sup\limits_{t\in [0,T]}
\|\nabla\tilde\tau\|_{  L_x^2}
\|\nabla\tilde u_{cf}\|_{ L_x^2} 
+ \varepsilon \int_0^T \int_{ \mathbb R^3}
\left(
|\nabla\tilde\tau|^2
+
|\nabla\tilde u_{cf}|^2
\right)dxdt \\
& \quad
+
|\ln\varepsilon^{-1}| \int_0^T \int_{ \mathbb R^3}  \left|\tilde n\,(\partial_rF[\tilde\tau]+r^{-1}F[\tilde\tau]) 
+ (\tilde\tau_r+r^{-1}\tilde\tau)
F[\tilde n] \right|  dxdt.
\end{aligned}
\end{equation}

\end{lem}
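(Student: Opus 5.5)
The plan is to run a Morawetz multiplier argument on the coupled first-order pair $(\tilde\tau,\tilde n)$ from \eqref{eq:tau_n_evolution}. To leading order this pair is a damped wave system, $\tilde\tau_t+b\tilde n\approx 0$ and $\tilde n_t+b^{-1}\Delta\tilde\tau-\bar\nu\Delta\tilde n\approx 0$. For the wave equation, the classical Morawetz multiplier is $\partial_r+r^{-1}$ applied to the unknown. Here I would multiply the $\tilde\tau$-equation by $\partial_r\tilde n+r^{-1}\tilde n$ and the $\tilde n$-equation by $\tilde\tau_r+r^{-1}\tilde\tau$, both weighted by $\phi(r)$.

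The weight I would try is a bounded radial function such as $\phi(r)=\ln(2+\min\{r,\varepsilon^{-1}\})$, or equivalently a multiplier $X=f(r)\partial_r$ with $f'\sim\langle r\rangle^{-1}$ on $r\le\varepsilon^{-1}$ and $f\lesssim\ln\varepsilon^{-1}$ everywhere. This bound on $f$ is exactly where the factor $|\ln\varepsilon^{-1}|$ in front of all error terms comes from. The weight must be cut off at $r\sim\varepsilon^{-1}$. Beyond that radius the good bulk term is lost, but there $\langle r\rangle^{-1}\le\varepsilon$, so the left-hand integrand is bounded by $\varepsilon(|\nabla\tilde\tau|^2+|\tilde n|^2)$. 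That contribution gives the term $\varepsilon\int\!\!\int(|\nabla\tilde\tau|^2+|\nabla\tilde u_{cf}|^2)$, using $\|\tilde n\|_{L^2}=\|\nabla\tilde u_{cf}\|_{L^2}$.

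The key steps, in order, are as follows.

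\textbf{(i) Time-derivative terms.} Writing $\tilde\tau_t\,\phi(\partial_r+r^{-1})\tilde n+\tilde n_t\,\phi(\partial_r+r^{-1})\tilde\tau$ as $\partial_t\big(\phi\,\tilde n(\partial_r+r^{-1})\tilde\tau\big)$ plus spatial divergences, the time boundary terms after integration are bounded by $|\ln\varepsilon^{-1}|\sup_t\|\nabla\tilde\tau\|_{L^2}\|\tilde n\|_{L^2}$. Hardy's inequality \eqref{hardy} handles $r^{-1}\tilde\tau$ here.

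\textbf{(ii) Principal spatial terms.} Integrating $b\tilde n\,\phi(\partial_r+r^{-1})\tilde n$ and $b^{-1}\Delta\tilde\tau\,\phi(\partial_r+r^{-1})\tilde\tau$ by parts in polar coordinates yields positive bulk terms of the form $\phi'(|\tilde n|^2+|\partial_r\tilde\tau|^2)$ and $(\phi/r)|\nabla_\omega\tilde\tau|^2$. It also yields a term $-\Delta(\phi/r)|\tilde\tau|^2$, which gives the $|\tilde\tau|^2/(\langle r\rangle^2 r)$ contribution; the origin term $4\pi\phi(0)|\tilde\tau(0)|^2$ is nonnegative. The Morawetz computation for $\partial_r+r^{-1}$ with a concave increasing weight should give the left-hand side with the weight $\langle r\rangle^{-1}$ on $r\le\varepsilon^{-1}$.

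\textbf{(iii) Variable coefficients and transport.} The factors $(\tau+1)^{\pm1}$ and the transport terms $u\cdot\nabla$ produce errors of the form $(|\nabla\tau|+|\nabla u|)\,\phi\,(\ldots)^2$. I would bound $\nabla\tau$ and $\nabla u$ in $L^\infty$ by $\langle r\rangle^{-1}\|\Gamma^{\le 5}(\tau,u)\|_{L^2}$, using \eqref{ineq r f} together with \eqref{sobolev}. This gives the first error term on the right.

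\textbf{(iv) Viscous term.} The term $-\bar\nu\Delta\tilde n\cdot\phi(\partial_r+r^{-1})\tilde\tau$ is handled by Cauchy--Schwarz: it is at most $\nu|\ln\varepsilon^{-1}|\int\!\!\int(|\Delta\tilde n|^2+|\nabla\tilde\tau|^2)$, plus Hardy for $r^{-1}\tilde\tau$.

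\textbf{(v) Forcing terms.} The forcing terms $F[\tilde\tau]$ and $F[\tilde n]$ enter exactly as written, after moving the radial derivative from $\tilde n$ onto $F[\tilde\tau]$ by parts, which produces $\tilde n(\partial_rF[\tilde\tau]+r^{-1}F[\tilde\tau])$.

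The main obstacle I expect is step (ii). There I must check the signs and the origin behaviour of the $|\tilde\tau|^2$ term, and show that the cutoff at $r=\varepsilon^{-1}$ does not destroy positivity in the transition region. If the logarithmic weight creates a negative term there, I would smooth the cutoff and absorb that term into the $\varepsilon$ error, since it is supported where $\langle r\rangle^{-1}\lesssim\varepsilon$.
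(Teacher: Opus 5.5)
Your overall route is the paper's. Pairing the $\tilde\tau$-equation with (the adjoint of) $\phi(\partial_r+r^{-1})\tilde n$ and the $\tilde n$-equation with $\phi(\partial_r+r^{-1})\tilde\tau$ is what the paper does. It applies $\partial_r(r\,\cdot)$ to the $\tilde\tau$-equation, multiplies by $r\tilde n$, multiplies the $\tilde n$-equation by $r(r\tilde\tau)_r$, and integrates in $dr\,d\omega$. The exterior region $r\ge\varepsilon^{-1}$, the time-boundary term and the viscous term are then handled as you describe. The paper uses $f_R=r/(r+R)$ for each dyadic $1\le R\le\varepsilon^{-1}$ and sums. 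Since the identity is linear in the weight, this is the same as using the single weight $\sum_R r/(r+R)$, which is comparable to $\ln(1+\min\{r,\varepsilon^{-1}\})$. That is your logarithmic weight, with one difference that matters: it vanishes at $r=0$.

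\textbf{The gap: your choice $\phi(0)=\ln 2>0$ breaks step (iii).} With $\phi(0)\neq0$ the multiplier is singular at the origin, and its commutator with the transport operator is not of the form $\nabla u\cdot(\cdots)$. The transport contributions combine to $\int\tilde n\,\{-n\,\phi(\partial_r+r^{-1})\tilde\tau+[\phi(\partial_r+r^{-1}),u\cdot\nabla]\tilde\tau\}$, where
\begin{equation*}
[\phi(\partial_r+r^{-1}),u\cdot\nabla]\tilde\tau=\phi\,\partial_r u\cdot\nabla\tilde\tau-\phi'(u\cdot\omega)\tilde\tau_r-\frac{\phi}{r}\bigl(u\cdot\nabla\tilde\tau-(u\cdot\omega)\tilde\tau_r\bigr)-\Bigl(\frac{\phi}{r}\Bigr)'(u\cdot\omega)\tilde\tau .
\end{equation*}
The last two terms contain undifferentiated $u$. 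Near $r=0$ they behave like $r^{-1}|u||\tilde n||\nabla\tilde\tau|$ and $r^{-2}|u||\tilde n||\tilde\tau|$, and $u(t,0)\neq0$ in general. Near the origin your good terms control $|\tilde n|^2$, $|\nabla\tilde\tau|^2$ and $|\tilde\tau|^2/r$. Because $\phi(0)>0$ they also control $r^{-3}|\nabla_{S^2}\tilde\tau|^2$ and $|\tilde\tau(t,0)|^2$. They never control $|\tilde n|^2/r$. A scaling check with data concentrated at scale $\lambda\to0$ near the origin shows the error-to-good ratio growing like $\lambda^{-1/2}$. So smallness of $\|\Gamma^{\le5}(\tau,u)\|_{L^2_x}$ cannot absorb these terms, and the $\nu\int\!\!\int|\Delta\tilde n|^2$ term on the right cannot do so uniformly in $\nu$.

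The fix is to normalize $\phi(0)=0$, e.g. $\phi(r)=\ln(1+\min\{r,\varepsilon^{-1}\})$. Then:
\begin{itemize}
\item $\phi/r$ and $(\phi/r)'$ are bounded, and $\phi'=(1+r)^{-1}$ for $r<\varepsilon^{-1}$;
\item $-\Delta(\phi/r)=-\phi''/r=r^{-1}(1+r)^{-2}$, which gives exactly the $|\tilde\tau|^2/(\langle r\rangle^2 r)$ weight;
\item $\phi/r-\phi'/2\ge\frac{1}{2(1+r)}$, because $\ln(1+r)\ge r/(1+r)$.
\end{itemize}
Step (iii) then goes through using $|u|\lesssim\langle r\rangle^{-1}\|\Gamma^{\le4}u\|_{L^2_x}$ for $r\ge1$, $\|u\|_{L^\infty}$ for $r\le1$, and local Hardy for $\tilde\tau/r$. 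This is exactly why the paper's $f_R/r=(r+R)^{-1}$ causes no trouble at the origin.

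Two smaller points. First, in (i), multiplying the $\tilde\tau$-equation by $+\phi(\partial_r+r^{-1})\tilde n$ does not produce an exact time derivative. You need the $L^2$-adjoint $-\phi(\partial_r+r^{-1})\tilde n-\phi'\tilde n$; your step (v) already uses this implicitly. Second, the cutoff at $r=\varepsilon^{-1}$ is not the obstacle you expect. For a concave nondecreasing $\phi$ every bulk term stays nonnegative, and the kink in $\phi'$ contributes a positive measure to $-\phi''/r$. No smoothing is needed; you lose bulk control only for $r\ge\varepsilon^{-1}$, where you already use $\langle r\rangle^{-1}\le\varepsilon$.
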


\begin{proof}
Recall that $(\tilde\tau,\tilde n)$ satisfies \eqref{eq tilde tau u} and \eqref{eq tilde n m}, respectively. Differentiating the equation for $\tilde\tau$ in the radial
direction yields
\begin{equation}\label{eq:tau-radial}
(r\tilde\tau)_{tr}
+b r \tau_r \tilde n + b(\tau+1) (r\tilde n)_r + (r\,u\cdot\nabla\tilde\tau)_r
=r\partial_rF[\tilde\tau]+F[\tilde\tau].
\end{equation}
The basic observation is that, after coupling
\eqref{eq:tau-radial} with the equation for $\tilde n$,
one obtains a positive contribution controlling both
$\tilde\tau$ and $\tilde n$. To this end, we multiply \eqref{eq:tau-radial} and \eqref{eq tilde n m} by $r\tilde n$
and
$r\partial_r(r\tilde\tau)$, respectively.
Adding the identities together yields
\begin{equation}\label{eq:Morawetz-coupled}
\begin{aligned}
&\quad (r \tilde n) \Big[  (r\tilde\tau)_{tr} + b r \tau_r \tilde n + b(\tau+1) (r\tilde n)_r + (ru\cdot\nabla\tilde\tau)_r - r\partial_r F[\tilde\tau] - F[\tilde\tau] \Big] \\
& + r(r\tilde\tau)_r \Big[ \tilde n_t + u\cdot\nabla \tilde n + b^{-1} (\tau+1)\Delta\tilde\tau - \bar\nu \Delta \tilde n - F[\tilde n] \Big] = 0.
\end{aligned}
\end{equation}
After integration by parts in $r$, it follows that
\begin{equation}\label{eq:Morawetz-integrated}
\begin{aligned}
&  \frac{d}{dt} \int_{\mathbb{S}^2} r \tilde n \, (r\tilde\tau)_r \, d\omega
+ b \int_{\mathbb{S}^2} \partial_r \tau \, |r\tilde n|^2 \, d\omega
+ \frac{ b}{2} \int_{\mathbb{S}^2} (\tau+1) \partial_r |r \tilde n|^2 \, d\omega \\
& + \frac{b^{-1}}{2}  \int_{\mathbb{S}^2} (\tau+1) \partial_r |(r\tilde\tau)_r|^2 \, d\omega
- \frac{b^{-1}}{2}  \int_{\mathbb{S}^2} (\tau+1) r^{-2} \partial_r |\nabla_{S^2} (r\tilde\tau)|^2 \, d\omega \\
&-b^{-1} \int_{\mathbb{S}^2} \nabla_{S^2} \tau \cdot r^{-2} (r\tilde\tau)_r \nabla_{S^2} (r\tilde\tau)\, d\omega-\int_{\mathbb{S}^2} r(r\tilde\tau)_r \bar\nu \Delta \tilde n d\omega \\
& + \int_{\mathbb{S}^2} r^2 [ u \cdot \nabla (\tilde n \cdot \tilde\tau_r) + \tilde n u_r \cdot  \nabla \tilde \tau+  r^{-1} u \cdot \nabla (\tilde\tau \cdot \tilde n) ] d\omega \\
 = &\int_{\mathbb{S}^2} \big[  (r \tilde n ) (r\partial_r F[\tilde\tau] + F[\tilde\tau]) + r(r\tilde\tau)_r F[\tilde n] \big] d\omega.
\end{aligned}
\end{equation}

To localize the estimate, we introduce the standard Morawetz weight
\[
f(r)=\frac{r}{r+R},
\]
where $R\ge1$ is a dyadic parameter.
Multiplying \eqref{eq:Morawetz-integrated} by $f(r)$ and integrating in
$(t,r,\theta) \in  [0,T]\times\mathbb R \times \mathbb S^2$, we get
\begin{equation} \label{eq morawetz}
	\begin{aligned}
&   \int_{\mathbb{R}^3} f \tilde{n} \left( \tilde{\tau}_r + \frac{\tilde{\tau}}{r} \right) dx \Bigg|_{0}^{T} + \frac{b}{2}\int_{\mathbb{R}_+ \times \mathbb{R}^3} f \tau_{r}  \tilde{n}^2 dxdt - \frac{b^{-1}}{2} \int_{\mathbb{R}_+ \times \mathbb{R}^3} f \tau_{r} \left( \tilde{\tau}_r  +  \frac{\tilde{\tau}}{r} \right )^2dxdt \\
& + \frac{b^{-1}}{2} \int_0^T \int_{ \mathbb{R}^3}  (\tau+1)   \left [-f^\prime\left (|\tilde{\tau}_r|^2 + b^2 |\tilde{n}|^2\right) + (r^{-2}f)_r | \nabla_{S^2} \tilde{\tau} |^2\right]dxdt  \\
& + \frac{b^{-1}}{2} \int_0^T \int_{ \mathbb{R}^3}\frac{f^{\prime \prime}(\tau+1) + f^\prime  \tau_r - 2f^\prime r^{-1} (\tau+1) }{r} |\tilde{\tau}|^2    dxdt \\
& + \frac{b^{-1}}{2} \int_0^T \int_{ \mathbb{R}^3} r^{-2} f  \tau_{r} | \nabla_{S^2} \tilde{\tau} |^2 dxdt\\
& + b^{-1} \int_0^T \int_{ \mathbb{R}^3} (r^{-1} f)_r \nabla_{S^2}\tau \cdot  \nabla_{S^2} \tilde{\tau} \cdot  \frac{ \tilde{\tau}}{r} dxdt\\
& +\int_0^T \int_{ \mathbb{R}^3} \left[- f^{\prime}  \tilde{\tau}_r u \cdot \tilde{n}   + f\left(-\tilde{\tau}_r n \cdot \tilde{n}  + \tilde{n}  u_r \cdot \nabla \tilde{n} \right) - r^{-1} f \tilde{\tau} n \cdot \tilde{n} \right] dxdt \\
& - \int_0^T \int_{ \mathbb{R}^3} \nabla (r^{-1} f )\tilde{\tau} u  \cdot \tilde{n} dxdt - \int_{\mathbb{R}_+ \times \mathbb{R}^3} f\left ( \tilde{\tau}_r + \frac{\tilde{\tau}}{r} \right )  \bar{\nu} \Delta \tilde{n} dtdx \\
 =& \int_0^T \int_{ \mathbb{R}^3} f \left[ \tilde n \partial_r F[\tilde\tau] + \frac{\tilde n}{r}F[\tilde\tau] +\left ( \tilde{\tau}_r + \frac{\tilde{\tau}}{r}  \right) F[\tilde n] \right] dxdt,
	\end{aligned}
\end{equation}	
which can be rewritten as
\begin{equation}
	\begin{aligned}
& \quad \frac{b^{-1}}{2} \int_0^T \int_{ \mathbb{R}^3}  (\tau+1)   \left [ f^\prime\left (|\tilde{\tau}_r|^2 + b^2 |\tilde{n}|^2\right) - (r^{-2}f)_r | \nabla_{S^2} \tilde{\tau} |^2\right]dxdt  \\
& \quad + \frac{b^{-1}}{2} \int_0^T \int_{ \mathbb{R}^3}\frac{-f^{\prime \prime} +2f^\prime r^{-1}  }{r} (\tau+1)|\tilde{\tau}|^2    dxdt \\
& =  \int_{\mathbb{R}^3} f \tilde{n} \left( \tilde{\tau}_r + \frac{\tilde{\tau}}{r} \right) dx \Bigg|_{0}^{T} + \frac{b}{2}\int_{\mathbb{R}_+ \times \mathbb{R}^3} f \tau_{r}  \tilde{n}^2 dxdt - \frac{b^{-1}}{2} \int_{\mathbb{R}_+ \times \mathbb{R}^3} f \tau_{r} \left( \tilde{\tau}_r  +  \frac{\tilde{\tau}}{r} \right )^2dxdt \\
& + \frac{b^{-1}}{2} \int_0^T \int_{ \mathbb{R}^3}\frac{f^\prime  \tau_r }{r} |\tilde{\tau}|^2  dxdt  + \frac{b^{-1}}{2} \int_0^T \int_{ \mathbb{R}^3} r^{-2} f  \tau_{r} | \nabla_{S^2} \tilde{\tau} |^2 dxdt\\
& + b^{-1} \int_0^T \int_{ \mathbb{R}^3} (r^{-1} f)_r \nabla_{S^2}\tau \cdot  \nabla_{S^2} \tilde{\tau} \cdot  \frac{ \tilde{\tau}}{r} dxdt\\
& +\int_0^T \int_{ \mathbb{R}^3} \left[- f^{\prime}  \tilde{\tau}_r u \cdot \tilde{n}   + f\left(-\tilde{\tau}_r n \cdot \tilde{n}  + \tilde{n}  u_r \cdot \nabla \tilde{n} \right) - r^{-1} f \tilde{\tau} n \cdot \tilde{n} \right] dxdt \\
& - \int_0^T \int_{ \mathbb{R}^3} \nabla (r^{-1} f )\tilde{\tau} u  \cdot \tilde{n} dxdt - \int_{\mathbb{R}_+ \times \mathbb{R}^3} f\left ( \tilde{\tau}_r + \frac{\tilde{\tau}}{r} \right )  \bar{\nu} \Delta \tilde{n} dtdx \\
& + \int_0^T \int_{ \mathbb{R}^3} f \left[ \tilde n \partial_r F[\tilde\tau] + \frac{\tilde n}{r}F[\tilde\tau] +\left ( \tilde{\tau}_r + \frac{\tilde{\tau}}{r}  \right) F[\tilde n] \right] dxdt.
	\end{aligned}
\end{equation}	
By Remark \ref{rem14} and the positivity of
\[
f'(r)=\frac{R}{(r+R)^2},
 \ 
-\Big(\frac{f(r)}{r^2}\Big)_r
=\frac{1}{r^2(r+R)}
+\frac{1}{r(r+R)^2}, \ - f''(r) = \frac{2R}{(r+R)^3},
\]
the main positive terms become
\[
\int_0^T \int_{ \mathbb{R}^3} \left[ f'(|\tilde\tau_r|^2 + |\tilde n|^2) - (r^{-2} f)_r |\nabla_{S^2} \tilde\tau|^2 - \left( \frac{f''}{r} - \frac{2f'}{r^2} \right) |\tilde\tau|^2 \right] dxdt,
\]
which controls the weighted $L^2$ norms of $\tilde n$, $\nabla\tilde\tau$ and $\tilde\tau$. Therefore, \eqref{eq morawetz} implies
\begin{equation}\label{eq:localized-Morawetz}
\int_0^T \int_{\{ r \sim R \}}
\left(
\frac{|\tilde n|^2}{R}
+\frac{|\nabla\tilde\tau|^2}{R} +\frac{|\tilde\tau|^2}{R^2r}
\right)
 dxdt
\lesssim
\mathcal N_R^{left} (T)+\mathcal N_R^{right}(T),
\end{equation}
where $\mathcal N_R^{right}(T)$ denotes the contribution from the right-hand side of \eqref{eq morawetz}, i.e.,
\begin{equation} \label{def of N right}
\mathcal N_R^{right}(T) = \int_0^T \int_{ \{ r \sim R \} } \left| \tilde n (\partial_rF[\tilde\tau]+r^{-1}F[\tilde\tau]) +  (\tilde\tau_r+r^{-1}\tilde\tau)
F[\tilde n]  \right| dxdt,
\end{equation}
and $\mathcal N_R^{left}(T)$ consists of the remaining terms on the left-hand side of \eqref{eq morawetz}, i.e.,
\begin{equation} \label{error terms}
	\begin{aligned}
\mathcal N_R^{left}(T) = \sum\limits_{i=1}^8 \mathcal N_R^{i}(T),
	\end{aligned}
\end{equation}	
where
\begin{equation} \label{error terms}
	\begin{aligned}
\mathcal N_R^{1}(T) = & \int_0^T \int_{ \{ r \sim R \} } f \tau_r ( |\tilde{n}|^2 + |\nabla \tilde{\tau}|^2 ) dxdt, \\
\mathcal N_R^{2}(T) = & \int_0^T \int_{ \{ r \sim R \} } f \tau_r  \left| \frac{\tilde{\tau}}{r} \right|^2 dxdt, \\
\mathcal N_R^{3}(T) = & \int_0^T \int_{ \{ r \sim R \} } \frac{1}{r(r+R)} \left( \nabla_{S^2}\tau \nabla_{S^2} \tilde{\tau}  \frac{\tilde{\tau}}{r}+  \tau_r |\nabla_{S^2} \tilde{\tau}|^2  \right) dxdt, \\ 
\mathcal N_R^{4}(T) =& \int_0^T \int_{ \{ r \sim R \} } f (\tilde{\tau}_r n \cdot \tilde{n} + \tilde{n} u_r  \cdot \nabla \tilde{\tau}) dxdt, \\
\mathcal N_R^{5}(T) =& \int_0^T \int_{ \{ r \sim R \} }  \frac{1}{r+R} (  \tilde{\tau}_r u \cdot \tilde{n}+\tilde{\tau} n \cdot \tilde{n} ) dxdt , \\ \mathcal N_R^{6}(T) =& \int_0^T \int_{ \{ r \sim R \} } \frac{1}{(r+R)^2} \tilde{\tau} u  \cdot \tilde{n} dxdt, \\
\mathcal N_R^{7}(T) =& \int_0^T \int_{ \{ r \sim R \} } f \left(\tilde{\tau}_r + \frac{\tilde{\tau}}{r} \right) \cdot \bar{\nu} \Delta \tilde{n} dxdt, \\
\mathcal N_R^{8}(T) =& \left. \int_{ \{ r \sim R \} } f \tilde{n} \left(\tilde{\tau}_r + \frac{\tilde{\tau}}{r} \right)  dx \right|_0^T .
	\end{aligned}
\end{equation}	
These terms are treated as errors and estimated using \eqref{sobolev}, \eqref{hardy} and the bootstrap assumptions. We estimate each term in $\mathcal N_R^{left}(T)$ in turn.

\medskip
\noindent
{\bf Estimate of $\mathcal N_R^{1}(T)$.}  By \eqref{ineq r f} and \eqref{sobolev}, we have
\begin{equation}
\begin{aligned}
\| \langle r \rangle  \tau_r \|_{ L^\infty_t L^\infty_x } & \lesssim \| \langle r \rangle \Gamma^{\le 2} \tau_r \|_{ L^\infty_tL^\infty_r L^2_\omega } \\   & \lesssim \|  \Gamma^{\le 5} \tau  \|_{L^\infty_t   L^2_x},
\end{aligned}
\end{equation}
and thus
\begin{equation} \label{N left 1}
\begin{aligned}
 \mathcal N_R^{1}(T) \lesssim \| \Gamma^{\leq 5}  \tau\|_{L^\infty(0,T;L^2_x)} \int_0^T \int_{ \{ r \sim R \}} \langle r \rangle^{-1} (|\tilde{n}|^2 + |\nabla \tilde{\tau}|^2 ) dxdt .    
\end{aligned}
\end{equation}

\medskip
\noindent
{\bf Estimate of $\mathcal N_R^{2}(T)$.} For $r \geq 1$, we have
\begin{equation}
\begin{aligned}
 &\quad \int_0^T \int_{ \{ r \sim R \} }   |\tau_r  |\left| \frac{\tilde{\tau}}{r} \right|^2 dxdt \\ 
 &\lesssim \| \Gamma^{\leq 5}  \tau\|_{L^\infty(0,T;L^2_x)} \int_0^T \int_{ \{ r \sim R \}} \langle r \rangle^{-2} r^{-1} |\tilde{\tau}|^2  dxdt .    
\end{aligned}
\end{equation}
For $r < 1$, using \eqref{hardy}, we have
\begin{equation}
\begin{aligned}
 &\quad \int_0^T \int_{ \{ r <1 \} }   |\tau_r  |\left| \frac{\tilde{\tau}}{r} \right|^2 dxdt \\
 & \lesssim \| \Gamma^{\leq 5}  \tau\|_{L^\infty(0,T;L^2_x)} \int_0^T \int_{   \{ r <1 \}}  |\nabla \tilde{\tau}|^2   dxdt .    
\end{aligned}
\end{equation}

\medskip
\noindent
{\bf Estimate of $\mathcal N_R^{3}(T)$.}  Noting $|\nabla_{S^2}\tau| \lesssim r |\nabla \tau|$, we have
\begin{equation}
\begin{aligned}
\mathcal N_R^{3}(T) \lesssim \int_0^T \int_{ \{ r \sim R \}} \frac1{r+R}
|\nabla \tau|
|\nabla_{S^2}\tilde\tau|
\left(
|\nabla\tilde\tau|+\frac{|\tilde\tau|}{r}
\right) dxdt \\
\end{aligned}
\end{equation}
For $r \geq 1$,
we have $(r+R)^{-1}\sim r^{-1} \  (r \sim R)$. Applying \eqref{ineq r f} and \eqref{sobolev} yields
\begin{equation}
\begin{aligned}
\| \langle r \rangle  \nabla \tau \|_{   L^\infty_x }  \lesssim \|  \Gamma^{\le 5} \tau  \|_{ L^2_x} .
\end{aligned}
\end{equation}
Hence,
\begin{equation}
\begin{aligned}
\mathcal N_R^{3}(T) &\lesssim \| \Gamma^{\leq 5}  \tau\|_{L^\infty(0,T;L^2_x)} \int_0^T \int_{ \{ r \sim R \}}  \langle r \rangle ^{-1}
\left(
|\nabla\tilde\tau|^2+\frac{|\tilde\tau|^2}{r^2}
\right) dxdt .    
\end{aligned}
\end{equation}
For $r < 1$, by \eqref{hardy},
we have
\begin{equation}
\begin{aligned}
& \quad \int_0^T \int_{ \{ r <1 \}} \frac1{r+1}
|\nabla \tau|
|\nabla_{S^2}\tilde\tau|
\left(
|\nabla\tilde\tau|+\frac{|\tilde\tau|}{r}
\right) dxdt \\
&\lesssim  \int_0^T \int_{ \{ r <1 \}}  |\nabla \tau|
|\nabla \tilde\tau|
\left(
|\nabla\tilde\tau|+\frac{|\tilde\tau|}{r}
\right) dxdt \\
&\lesssim \| \Gamma^{\leq 3}  \tau\|_{L^\infty(0,T;L^2_x)} \int_0^T \int_{ \{ r <1 \}} 
|\nabla \tilde\tau|
\left(
|\nabla\tilde\tau|+\frac{|\tilde\tau|}{r}
\right) dxdt \\
&\lesssim \| \Gamma^{\leq 3}  \tau\|_{L^\infty(0,T;L^2_x)} \int_0^T \int_{ \{ r <1 \}} |\nabla\tilde\tau|^2  dxdt .
\end{aligned}
\end{equation}

\medskip
\noindent
{\bf Estimate of $\mathcal N_R^{4}(T)$.} Noting $ n = \nabla \cdot u $, we also have
\begin{equation}
\begin{aligned}
\mathcal N_R^{4}(T)
 &\lesssim \|\Gamma^{\leq 5}  u\|_{L^\infty(0,T;L^2_x)} \int_0^T \int_{ \{ r \sim R \}} \langle r \rangle^{-1} |\tilde{n}| |\nabla \tilde{\tau}|  dxdt   \\
  &\lesssim \|\Gamma^{\leq 5}  u\|_{L^\infty(0,T;L^2_x)} \int_0^T \int_{ \{ r \sim R \}}  \langle r \rangle^{-1} \left(|\tilde{n}|^2 + |\nabla \tilde{\tau}|^2 \right) dxdt.
\end{aligned}
\end{equation}

\medskip
\noindent
{\bf Estimate of $\mathcal N_R^{5}(T)$.} For $r \geq 1$, we have
\begin{equation}
\begin{aligned}
& \quad \int_0^T \int_{ \{ r \sim R \}} \frac{1}{r+R} |\tilde{\tau}_r u \cdot \tilde{n}+\tilde{\tau} n \cdot \tilde{n}| dxdt \\
&\lesssim \| \Gamma^{\leq 4} u\|_{L^\infty(0,T;L^2_x)} \int_0^T \int_{ \{ r \sim R \}} \langle r \rangle^{-2} |\tilde{n}| |\nabla \tilde{\tau}| dxdt \\
& \quad + \| \Gamma^{\leq 5} u\|_{L^\infty(0,T;L^2_x)} \int_0^T \int_{ \{ r \sim R \}} \langle r \rangle^{-2} |\tilde{n}| | \tilde{\tau}| dxdt .
\end{aligned}
\end{equation}
For $r <1$, we have
\begin{equation}
\begin{aligned}
& \quad \int_0^T \int_{ \{ r <1 \}} \frac{1}{r+1} |\tilde{\tau}_r u \cdot \tilde{n}+\tilde{\tau} n \cdot \tilde{n}| dxdt \\
&\lesssim \| \Gamma^{\leq 4} u\|_{L^\infty(0,T;L^2_x)} \int_0^T \int_{ \{ r <1 \} } |\tilde{n}| |\nabla \tilde{\tau}| dxdt \\
& \quad + \| \Gamma^{\leq 5} u\|_{L^\infty(0,T;L^2_x)} \int_0^T \int_{ \{ r <1 \} }  |\tilde{n}| |\tilde{\tau}| dxdt  \\
&\lesssim \| \Gamma^{\leq 5} u\|_{L^\infty(0,T;L^2_x)} \int_0^T \int_{ \{ r <1 \} }  \left( |\tilde{n}|^2 + |\nabla \tilde{\tau}|^2 + |\tilde{\tau}|^2 \right)dxdt .
\end{aligned}
\end{equation}

\medskip
\noindent
{\bf Estimate of $\mathcal N_R^{6}(T)$.} For $r \geq 1$, we have
\begin{equation}
\begin{aligned}
& \quad \int_0^T \int_{ \{ r \sim R \}} \frac{1}{(r+R)^2} |\tilde{\tau} u  \cdot \tilde{n}| dxdt \\ &\lesssim \| \Gamma^{\leq 4} u\|_{L^\infty(0,T;L^2_x)} \int_0^T \int_{ \{ r \sim R \}} \langle r \rangle^{-3} |\tilde{n}||\tilde{\tau}| dxdt \\ &\lesssim \| \Gamma^{\leq 4} u\|_{L^\infty(0,T;L^2_x)} \int_0^T \int_{ \{ r \sim R \}} \langle r \rangle^{-3} (|\tilde{n}|^2 + |\tilde{\tau}|^2 )dxdt .
\end{aligned}
\end{equation}
For $r < 1$, we have
\begin{equation}
\begin{aligned}
& \quad \int_0^T \int_{ \{ r <1 \}} \frac{1}{(r+R)^2} |\tilde{\tau} u  \cdot \tilde{n}| dxdt \\ 
&\lesssim \| \Gamma^{\leq 4} u\|_{L^\infty(0,T;L^2_x)} \int_0^T \int_{ \{ r<1 \}} (|\tilde{n}|^2 + |\tilde{\tau}|^2 )dxdt .
\end{aligned}
\end{equation}

\medskip
\noindent
{\bf Estimate of $\mathcal N_R^{7}(T)$.} The viscous contribution in the Morawetz estimate is given by
\begin{equation}
\begin{aligned}
\mathcal N_R^{7}(T)
& \lesssim \nu \int_0^T \int_{ \{ r \sim R \}} ( |\nabla\tilde{\tau}|^2 + |\Delta\tilde{n}|^2  )    dxdt .
\end{aligned}
\end{equation}
Here, we use $\bar{\nu} = \rho^{-1} \nu $, Remark \ref{rem14} and \eqref{hardy}.

\medskip
\noindent
{\bf Estimate of $\mathcal N_R^{8}(T)$.} For the remaining boundary term, by the Helmholtz estimate $\|\tilde n\|_{L^2_x}
\lesssim
\|\nabla\tilde u_{cf}\|_{L^2_x}$ and \eqref{hardy}, we have
\[
\begin{aligned}
& \quad \sup\limits_{t\in[0,T]} \left|
\int_{ \{ r \sim R \}}
\tilde n (t)
\left(
\tilde\tau_r(t)+\frac{\tilde\tau(t)}{r}
\right)dx
\right| \\
&\lesssim \sup\limits_{t\in[0,T]}
\|\tilde n\|_{ L^2_x}
\left(
\|\nabla\tilde\tau\|_{  L^2_x}
+
\left\|\frac{\tilde\tau}{r}\right\|_{ L^2_x}
\right)
\\
&\lesssim \sup\limits_{t\in[0,T]}
\|\nabla \tilde u_{cf}\|_{ L^2_x}
\|\nabla\tilde\tau\|_{   L^2_x}.
\end{aligned}
\]

 Summing \eqref{eq:localized-Morawetz} over dyadic numbers
$1\le R\le\varepsilon^{-1}$ , we arrive at
\begin{equation}\label{eq:Morawetz-global}
\begin{aligned}
& \quad  \int_0^T \int_{ r\le\varepsilon^{-1} }
\left(
\frac{|\tilde n|^2}{\langle r \rangle}
+\frac{|\nabla\tilde\tau|^2}{\langle r \rangle}
+\frac{|\tilde\tau|^2}{\langle r \rangle^2 r}
\right)
dxdt \\
& \lesssim
|\ln\varepsilon^{-1}|
\sup_{1\le R\le\varepsilon^{-1}}
\left(\mathcal N_R^{left}(T)+\mathcal N_R^{right}(T)\right).  
\end{aligned}
\end{equation}
Moreover, for the exterior region $r\ge\varepsilon^{-1}$, we have
\[
\begin{aligned}
& \quad \int_{\mathbb R_+\times\{r\ge\varepsilon^{-1}\}}
\left(
\frac{|\nabla\tilde\tau|^2+|\tilde n|^2}{\langle r \rangle}
+\frac{|\tilde\tau|^2}{\langle r \rangle^2 r}
\right)dxdt
\\
&\lesssim
\varepsilon
\int_{\mathbb R_+\times\{r\ge\varepsilon^{-1}\}}
\left(
|\nabla\tilde\tau|^2
+|\tilde n|^2
+\frac{|\tilde\tau|^2}{r^2}
\right)dxdt .
\end{aligned}
\]
Using \eqref{hardy} and the Helmholtz estimate $\|\tilde n\|_{L^2_x}
\lesssim
\|\nabla\tilde u_{cf}\|_{L^2_x}$, we further obtain
\begin{equation} \label{morawetz r geq}
\begin{aligned}
& \quad \int_{\mathbb R_+\times\{r\ge\varepsilon^{-1}\}}
\left(
\frac{|\nabla\tilde\tau|^2+|\tilde n|^2}{\langle r \rangle}
+\frac{|\tilde\tau|^2}{\langle r \rangle^2 r}
\right)dxdt
\\
&\lesssim
\varepsilon
\int_{\mathbb R_+\times\mathbb R^3}
\left(
|\nabla\tilde\tau|^2
+
|\nabla\tilde u_{cf}|^2
\right)dxdt .
\end{aligned}
\end{equation}
Finally, combining \eqref{N left 1}-\eqref{morawetz r geq} gives \eqref{eq:Morawetz-final}.

\end{proof}

\section{Proof of main theorem}

In this section, we complete the proof of the main theorem. The main difficulty lies in controlling the nonlinear interactions generated
by the commuted system. To overcome this difficulty, we exploit the
decomposition into low and high commutation orders.

By the order splitting \eqref{def of low hi} and Lemma \ref{lem:L2omega-product}, every nonlinear term contains at least one factor of
low commutation order. The low-order component is controlled by the improved
decay and weighted spacetime estimates obtained from the Morawetz inequality,
while the high-order component is handled by the energy and parabolic
dissipation estimates.

We divide the proof into three steps. First, we derive the high-order
energy estimate for the variables $(\tau,u)$. Next, we establish the
parabolic estimate for the low-order divergence-free part of the velocity. Finally, combining the Morawetz estimate
with the previous bounds, we improve the bootstrap assumptions and close the
bootstrap argument.

\subsection{High-order energy estimate}

For $0<T<\infty$, define the energy 
\begin{equation}\label{def of E}
 E_N(T):=
 \|\Gamma^{\leq N}(\tau,u)\|^2_{L^\infty(0,T;L^2_x)}
\end{equation}
and the viscous dissipation 
\begin{equation}\label{def of D}
\begin{aligned}
D_N(T):={}&
 \nu \|\nabla\Gamma^{\leq N}u_{cf}\|^2_{L^2(0,T; L^2_x)}
+ \mu \|\nabla\Gamma^{\leq N}u_{df}\|^2_{L^2(0,T; L^2_x)} .
\end{aligned}
\end{equation}
Since $N\geq10$, we define the density dissipation by
\begin{equation}\label{def of G}
G_N(T):= \nu \|\nabla\Gamma^{\leq N-1}\tau\|^2_{L^2(0,T; L^2_x)}.
\end{equation}
Then the bootstrap assumption (BA1) gives
\begin{equation} 
\begin{aligned}
E_N^{1/2}(t)+D_N^{1/2}(t)+ M^{-1}G_N^{1/2}(t) \leq M \delta \varepsilon^{1/2}( \ln \varepsilon^{-1} )^{-1}, \quad \forall t>0.
\end{aligned}
\end{equation}
Moreover, we will use the lower-order norm
\begin{equation}\label{def of A low}
 A_j(T):=
 \varepsilon^{-1}
 \|\Gamma^{\leq j}(\tau,u)\|^2_{L^2(0,T;L^\infty_rL^2_\omega)} 
\end{equation}
as a coefficient.
Setting $N_* = [N/2]+3$, we have

\begin{lem}[High-order energy estimate]\label{lem: high-energy}
Let $N\geq 10$, and suppose that
$(\tau,u)$ is a smooth solution of \eqref{eq tau u} on $[0,T]$. Then
\begin{equation}\label{tame high energy}
 E_N(T)+  D_N(T) 
 \leq E_N(0) 
 +C  A^{1/2}_{N_{*}}(T)
 \big(  E_N(T)+  D_N(T)+  G_N(T)\big) ,
\end{equation}
where $C>0$ is independent of $M, T,\mu$, and $\nu$. Moreover, under (BA1)-(BA3), we have
\begin{equation} \label{EN DN}
\begin{aligned}
E_N^{\frac12}(T)+D_N^{\frac12}(T) \le \left(C^{\frac12} + 4 C^{\frac12} M^{\frac72} \delta^{\frac12}  \right) \delta \varepsilon^{\frac12} (\ln\varepsilon^{-1})^{-1}.
\end{aligned}
\end{equation}

\end{lem}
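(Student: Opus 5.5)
We sum the basic energy inequality of Lemma \ref{lem:energy_tau_u} over all multi-indices $\Gamma^k$ with $k\le N$. This gives
\[
E_N(T)+D_N(T)\lesssim E_N(0)+\sum_{k\le N}\mathcal E_k(T),
\]
and the remaining work is to bound every term of $\mathcal E_k(T)$ in \eqref{3.3} by $A_{N_*}^{1/2}(T)\,(E_N+D_N+G_N)$.

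\textbf{Reduction to a spacetime bilinear bound.} Every term in $\mathcal E_k$ is, after integration over space and time, of the form $\int_0^T\int \Gamma^{c}v\,\Gamma^{d}w\,Z\,dx\,dt$, where:
\begin{itemize}
\item $v,w\in\{\tau,u\}$ and $Z$ is one of $\tilde\tau$, $\tilde u$, $\nabla\tilde u$;
\item in the viscous commutator terms one gets $\Gamma^c(\rho^{-1})\,\nabla^2\Gamma^d u$, and we integrate by parts once to move a derivative onto $\tilde u$, producing $\nu$- or $\mu$-weighted factors.
\end{itemize}
We write the integral in polar coordinates as $\int_0^T\int_0^\infty\int_{\mathbb S^2}(\cdots)\,r^2\,d\omega\,dr\,dt$ and apply Lemma \ref{lem:L2omega-product} on each sphere. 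This puts the low-order factor $\Gamma^{\le N_*}(\tau,u)$ in $L^\infty_rL^2_\omega$ and the high-order factors in $L^2_\omega$, and then $L^2$ in $r^2\,dr$. For example, the transport-type term satisfies
\[
\int_0^T\!\!\int |\Gamma^{\le N_*}(\tau,u)|_{L^\infty_\omega}\,|\nabla\Gamma^{\le N-1}(\tau,u)|\,|\Gamma^{\le N}(\tau,u)|
\;\lesssim\;
\|\Gamma^{\le N_*}\|_{L^2_tL^\infty_rL^2_\omega}\,\|\nabla\Gamma^{\le N-1}(\tau,u)\|_{L^2_tL^2_x}\,\sup_t\|\Gamma^{\le N}(\tau,u)\|_{L^2_x}.
\]
Here the $L^\infty_\omega$ norm is converted back to $\Gamma^{\le N_*}$ in $L^2_\omega$ via \eqref{sobolev}. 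Using
\[
\|\Gamma^{\le N_*}\|_{L^2L^\infty_rL^2_\omega}=\varepsilon^{1/2}A_{N_*}^{1/2},\qquad
\|\nabla\Gamma^{\le N-1}(\tau,u)\|_{L^2L^2}\le \varepsilon^{-1/2}(D_N+G_N)^{1/2},
\]
the two powers of $\varepsilon$ cancel and the term is bounded by $A_{N_*}^{1/2}(E_N+D_N+G_N)$. This is where $\min\{\mu,\nu\}=\varepsilon$ enters. The key structural point is that every nonlinear term contains exactly one derivative beyond the energy level, and that derivative is always absorbed by a dissipation. The only exception is $\tilde\tau F[\tilde\tau]$ with top-order $\nabla\Gamma^N\tau$, which carries no dissipation.

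\textbf{The $\nabla\Gamma^N\tau$ term.} This is the main obstacle. For $k=N$, the piece $\Gamma^c u\cdot\nabla\Gamma^{d}\tau$ with $|d|=N$ is absent from $F$, since it was moved to the left-hand side as $u\cdot\nabla\tilde\tau$ and handled by integration by parts. Hence the derivatives on $\tau$ in $F[\tilde\tau]$ have order at most $N$ total, i.e.\ $\nabla\Gamma^{\le N-1}\tau$, which is covered by $G_N$. The terms $b\Gamma^c\tau\,\nabla\!\cdot\Gamma^d u$ are covered by $D_N$. The term $\nabla\tau\cdot\tilde\tau\,\tilde u$ in $\mathcal E$ is handled the same way. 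The terms $\mu\nabla\tilde u\,\nabla\tau\,\tilde u$ carry their own $\mu$: we bound them by
\[
\mu\,\|\nabla\tilde u\|\,\|\nabla\tau\|_{L^\infty_\omega}\,\|\tilde u\|\lesssim \mu^{1/2}D_N^{1/2}\,\varepsilon^{1/2}A^{1/2}E_N^{1/2},
\]
which is even better. This proves \eqref{tame high energy}.

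\textbf{Closing under the bootstrap.} For \eqref{EN DN} we bound $A_{N_*}$. By Lemma \ref{lem 2.2}, for each $t$,
\[
\|\Gamma^{\le N_*}f\|_{L^\infty_rL^2_\omega}\lesssim (\ln\varepsilon^{-1})^{1/2}\,\|\langle r\rangle^{-1/2}\nabla\Gamma^{\le N_*+1}f\|_{L^2_x}+\varepsilon^{1/2}\|\nabla\Gamma^{\le N_*}f\|_{L^2_x}.
\]
Squaring and integrating in time, and using $N_*+1\le N_{\rm lo}$:
\begin{itemize}
\item the first term is controlled by (BA3) for $(\tau,u_{cf})$, giving $\ln\varepsilon^{-1}\cdot M^6\delta^2\varepsilon(\ln\varepsilon^{-1})^{-1}$;
\item the $u_{df}$ part is controlled by (BA2) together with the third inequality in \eqref{ineq r f}, giving $\mu^{-1}M^2\delta^2\varepsilon^2\lesssim M^2\delta^2\varepsilon$;
\item the second term is controlled by (BA1), giving $\varepsilon\cdot\varepsilon^{-1}M^4\delta^2\varepsilon(\ln\varepsilon^{-1})^{-2}$.
\end{itemize}
Altogether $A_{N_*}\lesssim M^6\delta^2$, so $A^{1/2}_{N_*}\lesssim M^3\delta$. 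Since $G_N\le M^4\delta^2\varepsilon(\ln\varepsilon^{-1})^{-2}$ by (BA1), the right-hand side of \eqref{tame high energy} is at most
\[
C\delta^2\varepsilon(\ln\varepsilon^{-1})^{-2}+C M^3\delta\cdot 2M^4\delta^2\varepsilon(\ln\varepsilon^{-1})^{-2}.
\]
Taking square roots, and using $(a+b)^{1/2}\le a^{1/2}+b^{1/2}$, gives \eqref{EN DN} with the constant $4C^{1/2}M^{7/2}\delta^{1/2}$.
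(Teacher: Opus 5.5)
Your route is the paper's. You sum Lemma~\ref{lem:energy_tau_u} over $|k|\le N$. Each term of $\mathcal E$ is bounded by three factors: a low-order one in $L^2_tL^\infty_rL^2_\omega$ (worth $\varepsilon^{1/2}A_{N_*}^{1/2}$), a dissipated one in $L^2_tL^2_x$ (worth $\varepsilon^{-1/2}(D_N+G_N)^{1/2}$), and an energy one in $L^\infty_tL^2_x$. You then bound $A_{N_*}$ by Lemma~\ref{lem 2.2} with (BA1), (BA3) for $(\tau,u_{cf})$, and by a trace inequality with (BA2) for $u_{df}$. Your closing arithmetic reproduces \eqref{EN DN}. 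Two local points need fixing, and one of them is a step that fails as written.

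\textbf{The failing step: integration by parts in the viscous commutators.} Do not integrate by parts in the terms $\Gamma^c(\rho^{-1})\,\mu\Delta\Gamma^d u$ of $F[\tilde u]$. The derivative lands on the product $\tilde u\,\Gamma^c(\rho^{-1})$, not only on $\tilde u$. For $|c|=N$, $d=0$ this produces a term $\mu\,\tilde u\,\nabla\Gamma^N(\rho^{-1})\,\nabla u$, hence $\nabla\Gamma^N\tau$. Neither $E_N$ nor $G_N$ controls that, since $G_N$ stops at $\nabla\Gamma^{\le N-1}\tau$.

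The step is also unnecessary. Since $|c|\ge1$ forces $|d|\le N-1$, the factor $\nabla^2\Gamma^d u$ is already a component of $\nabla\Gamma^{\le N}u$ and is paid for by $D_N$. When $|c|$ is large, you put $\Gamma^c(\rho^{-1})$ in $L^2_tL^2_x$ via $G_N$ and take $\mu\nabla^2\Gamma^d u$ as the low-order factor. This is what \eqref{estimate F u} does.

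\textbf{The four-rotation cap, and the trace citation.} That last step, like your identification of order-$N$ derivatives of $\tau$ with $\nabla\Gamma^{\le N-1}\tau$, rests on the cap of four rotations in $\Gamma^{\le N}$: for $|k|\ge5$, $\Gamma^k$ contains a spatial derivative. You should say so. It is also what makes $(|\nabla\tau|+|\nabla\cdot u|)(|\tilde\tau|^2+|\tilde u|^2)$ integrable in time. There the energy-level factors carry no extra derivative, so your ``key structural point'' does not describe the mechanism. Instead, the $L^2_t$ factor is $\tilde\tau$ or $\tilde u$ itself, which lies in $\nabla\Gamma^{\le N-1}(\tau,u)$. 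For $|k|\le4$ you swap roles: $\nabla\tau$ and $\nabla\cdot u$ go into $L^\infty_\omega$ via \eqref{sobolev} and then into $L^2_tL^2_x$, and $\sup_r\|(\tilde\tau,\tilde u)\|_{L^2_\omega}$ serves as the low-order factor. The paper uses the first case implicitly in \eqref{412}.

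For $u_{df}$, the third inequality of \eqref{ineq r f} only bounds $r^{1/2}f$ and degenerates as $r\to0$. Use the second one instead, $\|\langle r\rangle^{1/2}f\|_{L^\infty_rL^2_\omega}\lesssim\|\nabla\nabla^{\le1}f\|_{L^2_x}$, as the paper does. The extra derivative fits because $N_*+1=N_{\mathrm{lo}}$. (BA2) then gives $\|\Gamma^{\le N_*}u_{df}\|_{L^2(0,T;L^\infty_rL^2_\omega)}\lesssim\mu^{-1/2}M\delta\varepsilon\le M\delta\varepsilon^{1/2}$, which is the bound you used.
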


\begin{proof}
Summing \eqref{energy} over all commutation
multi-indices $|k|\leq N$, we obtain the high-order energy estimate
\begin{equation}
\label{eq:4.1}
\begin{aligned}
& \quad
\sup \limits_{t \in [0,T]}\|\Gamma^{\leq N}(\tau,u)(t)\|_{L^2_x}^{2}
+
\int_0^T
\left(
\nu
\|\nabla\Gamma^{\leq N}u_{cf}\|_{L^2_x}^{2}
+
\mu
\|\nabla\Gamma^{\leq N}u_{df}\|_{L^2_x}^{2}
\right)dt
\\
&\lesssim
\|\Gamma^{\leq N}(\tau_0,u_0)\|_{L^2_x}^{2}
+
\sum_{|k|\leq N}
\mathcal E_N(T).
\end{aligned}
\end{equation}
By \eqref{def of E} and \eqref{def of D}, \eqref{eq:4.1} implies
\begin{equation}
\label{eq:4.2-energy}
\begin{aligned}
E_N(T)+D_N(T)
\lesssim
E_N(0)
+
\sum_{|k|\leq N}\mathcal E_N(T).
\end{aligned}
\end{equation}
Writing $\widetilde{\tau}=\Gamma^k\tau, \widetilde{u}=\Gamma^ku$, we set
\begin{equation}
\begin{aligned}
I_{1,k}(T)
&=
\int_0^T\int_{\mathbb R^3}
\left(|\nabla\tau|+| \nabla\cdot u|\right)
\left(|\widetilde{\tau}|^2+ |\widetilde{u}|^2\right) dxdt ,
\\
I_{2,k}(T)
&=
\int_0^T\int_{\mathbb R^3} \left[\mu |\nabla\widetilde u|
|\nabla\tau|
|\widetilde u| + (\lambda+\mu) |\nabla \cdot \widetilde u|
|\nabla\tau|
|\widetilde u| \right]dxdt,
\\
I_{3,k}(T)
&=
\int_0^T\int_{\mathbb R^3}
\left(
|\widetilde{\tau}F[\widetilde{\tau}]|
+
|\widetilde uF[\widetilde u]|
\right)dxdt,
\end{aligned}
\end{equation}
and then
\begin{equation}
\begin{aligned}
\mathcal E_N(T) \leq \sum\limits_{i=1}^3I_{i,k}(T).
\end{aligned}
\end{equation}
We estimate each nonlinear term $I_{i,k}(T)$.

\medskip
\noindent
{\bf Estimate of $I_{1,k}(T)$.} Using spherical coordinates, we have
\begin{equation}\label{412}
\begin{aligned}
I_{1,k}(T)
=&\int_0^T
\int_0^\infty
\int_{S^2}
\left(|\nabla\tau|+| \nabla\cdot u|\right)
\left(|\tilde{\tau}|^2+ |\tilde{u}|^2\right)
r^2d\omega dr dt\\
\lesssim& \int_0^T\int_0^\infty\left(|\nabla \tau|+|\nabla\cdot u|\right)_{L_\omega^\infty}\left(\|\widetilde{\tau}\|_{L^2_\omega}
+\|\widetilde{u}\|_{L^2_\omega}\right)^2r^2drdt\\
\lesssim&\int_0^T\int_0^\infty\left\|\nabla\Gamma^{\le 2}(\tau, u)\right\|_{L^2_\omega}\left(\|\widetilde{\tau}\|_{L^2_\omega}
+\|\widetilde{u}\|_{L^2_\omega}\right)^2r^2drdt\\
\lesssim&\int_0^T\sup_r\left\|\nabla\Gamma^{\le 2}(\tau, u)\right\|_{L^2_\omega}\left(\|\widetilde{\tau}\|_{L^2_x}
+\|\widetilde{u}\|_{L^2_x}\right)^2dt\\
\lesssim& \left\|(\tilde\tau,\tilde u)\right\|_{L^\infty(0,T;L^2_x)}
\left\|\Gamma^{\leq N_{*}}(\tau,u)\right\|_{L^2(0,T;L^\infty_rL^2_\omega)}\\
&\cdot \left\|\nabla\Gamma^{\leq N-1}(\tau,u)\right\|_{L^2(0,T; L^2_x)}.
\end{aligned}
\end{equation}

\medskip
\noindent
{\bf Estimate of $I_{2,k}(T)$.}
Similarly, using
\begin{equation}\label{4.5}
\left\|\nabla\tau\left(\mu\,\nabla\tilde u+(\lambda+\mu)\,\nabla\!\cdot\!\tilde u\right)\right\|_{L^2_\omega}
\lesssim
\|\Gamma^{\leq N_{*}}\tau\|_{L^2_\omega}\,
\|\nabla \Gamma^{\leq N}u\|_{L^2_\omega},
\end{equation}
we have
\begin{equation} \label{I2}
\begin{aligned}
I_{2,k}(T) \lesssim
\|\tilde u\|_{L^\infty(0,T;L^2_x)}
\|\Gamma^{\leq N_{*}}(\tau,u)\|_{L^2(0,T;L^\infty_rL^2_\omega)}
\|\nabla\Gamma^{\leq N}u\|_{L^2(0,T; L^2_x)}.
\end{aligned}
\end{equation}

\medskip
\noindent
{\bf Estimate of $I_{3,k}(T)$.} By \eqref{def F tau u}, the nonlinear term $F[\widetilde{\tau}]$ is a quadratic expression of the form
\begin{equation}
\Gamma^c (\tau,u)
\nabla \Gamma^{d}(\tau,u) ,
\qquad
|c|+|d|\leq N , \ |c| \geq 1 .
\end{equation}
By Lemma \ref{lem:L2omega-product}, for each fixed $r$, we have
\begin{equation} \label{estimate F tau}
\begin{aligned}
\|F[\tilde \tau]
\|_{L^2_\omega} \lesssim
\|
\Gamma^{\leq N_{*}}(\tau,u)
\|_{L^2_\omega}
\|
\nabla \Gamma^{\leq N-1}(\tau,u)
\|_{L^2_\omega}.
\end{aligned}
\end{equation}
Moreover, for the nonlinear term $F[\widetilde{u}]$, we have
\begin{equation} \label{estimate F u}
	\begin{aligned}
\|F[\tilde u]\|_{L^2_\omega}
& \lesssim
\left\|\Gamma^{\leq N_{*}}(\tau,u)\right\|_{L^2_\omega}\,
\left\|\nabla \Gamma^{\leq N-1}(\tau,u)\right\|_{L^2_\omega} \\
& \quad +
\Bigl(
\|\nabla \Gamma^{\le N-1}(\rho^{-1})\|_{L^2_\omega}
+
\|\tilde{\Omega}^{\le 4}(\rho^{-1})\|_{L^2_\omega}
\Bigr) \\
& \qquad \cdot
\bigl\|
\mu\,\nabla^2\Gamma^{\leq N_{*}}u,\,
(\lambda+\mu)\,\nabla^2\Gamma^{\leq N_{*}}u_{cf}
\bigr\|_{L^2_\omega}		\\
& \quad +
\|\Gamma^{\leq N_{*}}(\rho^{-1})\|_{L^2_\omega}\,
\bigl\|\mu\,\nabla \Gamma^{\le N}u,\,
(\lambda+\mu)\,\nabla  \cdot \Gamma^{\le N}u
\bigr\|_{L^2_\omega} \\
& \lesssim
\|\Gamma^{\leq N_{*}}(\tau,u)\|_{L^2_\omega}\,
\Bigl(
\|\nabla \Gamma^{\leq N-1} \tau\|_{L^2_\omega}
+
\|\nabla \Gamma^{\leq N}u\|_{L^2_\omega}
\Bigr).
\end{aligned}			
\end{equation}		
Hence,
\begin{equation} \label{I3}
\begin{aligned}
I_{3,k}(T)
&\lesssim
\|(\tilde\tau,\tilde u)\|_{L^\infty(0,T;L^2_x)}
\left(
\|F[\widetilde{\tau}]\|_{L^2(0,T; L^2_x)}
+
\|F[\widetilde u]\|_{L^2(0,T; L^2_x)}
\right)
\\
&\lesssim
\|(\tilde\tau,\tilde u)\|_{L^\infty(0,T;L^2_x)} \|\Gamma^{\leq N_{*}}(\tau,u)\|_{L^2(0,T;L^\infty_rL^2_\omega)}\\
& \qquad \cdot
\Bigl(
\|\nabla \Gamma^{\leq N-1} \tau\|_{L^2(0,T; L^2_x)}
+
\|\nabla \Gamma^{\leq N}u\|_{L^2(0,T; L^2_x)}
\Bigr).
\end{aligned}
\end{equation}

Combining \eqref{eq:4.2-energy}, \eqref{412}, \eqref{I2}, and \eqref{I3}, and then summing over
$|k|\leq N$, we obtain
\begin{equation} 
\begin{aligned}
& \quad E_N(T)+D_N(T) \\
& \le
E_N(0) +CE_N^{1/2}(T)\|\Gamma^{\leq N_{*}}(\tau,u)\|_{L^2(0,T;L^\infty_rL^2_\omega)} \\
& \qquad \cdot
\Bigl(
\|\nabla \Gamma^{\leq N-1} \tau\|_{L^2(0,T; L^2_x)}
+
\|\nabla \Gamma^{\leq N}u\|_{L^2(0,T; L^2_x)}
\Bigr),
\end{aligned}
\end{equation}
where $C$ is a sufficiently large constant independent of the bootstrap constant $M$. Since $\mu,\nu\geq\varepsilon$, the last factor in brackets is bounded by
\[\varepsilon^{-1/2}\bigl(  D_N^{1/2}(T)+  G_N^{1/2}(T) \bigr).
\]
Moreover, the trace factor is consistent with
$ \varepsilon^{1/2} A^{1/2}_{N_{*}}(T)$ by
\eqref{def of A low}. Hence, we obtain \eqref{tame high energy}.
Furthermore, by Lemma \ref{lem 2.2}, we have
\begin{equation} 
	\begin{aligned}
    & \quad
 \| \Gamma^{\le N_{*}}(\tau,u_{cf}) \|_{L^2(0,T;L^\infty_rL^2_\omega)} \\
 & \lesssim (\ln\varepsilon^{-1})^{\frac12}\,
\left\|\langle r\rangle^{-\frac12} \nabla \nabla^{\le 1 }\Gamma^{\le N_{*}}(\tau,u_{cf}) \right\|_{L^2(0,T; L^2_x)} \\
& \quad+ \varepsilon^{\frac12}\,\left\|\nabla \Gamma^{\le N_{*}}(\tau,u_{cf})\right\|_{L^2(0,T; L^2_x)}.
\end{aligned}
\end{equation}
It then follows from (BA1) and (BA3) that
\begin{equation} 
	\begin{aligned}
 \quad
 \| \Gamma^{\le N_{*}}(\tau,u_{cf}) \|_{L^2(0,T;L^\infty_rL^2_\omega)}   \le  2C M^3 \delta \varepsilon^{\frac12}.
\end{aligned}
\end{equation}
On the other hand, \eqref{ineq r f} and (BA2) give
\begin{equation}
	\begin{aligned}
    & \quad
\| \Gamma^{\le N_{*}} u_{df} \|_{L^2(0,T;L^\infty_rL^2_\omega)}  \le C \| \nabla \nabla^{\le 1 }\Gamma^{\le N_{*}} u_{df}\|_{L^2(0,T; L^2_x)}  \le C  M \delta \varepsilon^{\frac12},
\end{aligned}
\end{equation}
which implies
\begin{equation} \label{Gamma low Gamma hi}
	\begin{aligned}
 A^{\frac12}_{N_{*}}(T)  \le 4C M^3 \delta,
\end{aligned}
\end{equation}
and thus
\begin{equation} \label{Gamma low Gamma hi-2}
	\begin{aligned}
& \quad A^{\frac14}_{N_{*}}(T)
 \left(  E_N^{\frac12}(T)+  D_N^{\frac12}(T)+  G_N^{\frac12}(T)\right)  \\
& \le 4C^{\frac12} M^{\frac72} \delta^{\frac32} \varepsilon^{\frac12} (\ln\varepsilon^{-1})^{-1}.
\end{aligned}
\end{equation}
Substituting this estimate into \eqref{tame high energy} yields \eqref{EN DN}.

\end{proof}

\begin{lem}[density estimate]\label{lem:density}
Let $N\geq 10$. Under the assumptions of
Lemma \ref{lem: high-energy}, we have
\begin{equation} \label{tame density}
	\begin{aligned}
G_N (T) 
&\le C \left[E_N (T) + D_N (T) + D_N^{\frac{1}{2}} (T) G_N^{\frac{1}{2}} (T) \right]\\
&  \quad +C A_{N_{*}}^{\frac12} \left[E_N (T) + D_N (T) + G_N (T)\right],
\end{aligned}			
\end{equation}	
where $C>0$ is independent of $M, T,\mu$, and $\nu$. Moreover, under (BA1)-(BA3), we have
\begin{equation} \label{GN}
	\begin{aligned}
M^{-1} G_N^{\frac{1}{2}} (T)
& \le \left( C M^{-1} + 8C M^{\frac52}\delta^{\frac{1}{2}} + C^{\frac{1}{2}} M^{\frac{1}{2}} \right) \delta \varepsilon^{\frac{1}{2}} ( \ln \varepsilon^{-1} )^{-1}.
\end{aligned}			
\end{equation}	
\end{lem}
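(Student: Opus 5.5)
The plan is to apply Lemma \ref{lem:parabolic_tau} to each $\tilde\tau=\Gamma^k\tau$ with $|k|\le N-1$, multiply by $\nu$, and sum over $k$. The left-hand side is then exactly $G_N(T)$.

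For the linear terms on the right, use $\|\tilde n\|_{L^2_x}=\|\nabla\cdot\Gamma^k u_{cf}\|_{L^2_x}\le \|\nabla\Gamma^{\le N}u_{cf}\|_{L^2_x}$ (valid since $|k|\le N-1$). This gives
\[
\nu\int_0^T\|\tilde n\|_{L^2_x}^2\,dt\le D_N(T),\qquad
\nu\sup_t\|\tilde n\|_{L^2_x}\|\tilde\tau\|_{L^2_x}\le \nu\,E_N(T)\le E_N(T),
\]
where the last step uses $\nu<1$.

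The quadratic viscous term in $\mathcal P_k$ is $\nu\,|\nabla\tilde\tau||\nabla\tilde n|$. Multiplied by the extra $\nu$ it becomes $\nu^2\int|\nabla\tilde\tau||\nabla\tilde n|$. Since $\nabla\tilde n$ involves $\nabla^2\Gamma^{\le N-1}u_{cf}$, which is $\lesssim|\nabla\Gamma^{\le N}u_{cf}|$ in $L^2$, Cauchy--Schwarz gives
\[
\nu^2\int_0^T\!\!\int_{\mathbb R^3}|\nabla\tilde\tau||\nabla\tilde n|\,dx\,dt\le \nu^{1/2}\bigl(\nu^{1/2}\|\nabla\Gamma^{\le N-1}\tau\|_{L^2_{t,x}}\bigr)\bigl(\nu^{1/2}\|\nabla\Gamma^{\le N}u_{cf}\|_{L^2_{t,x}}\bigr)\le G_N^{1/2}D_N^{1/2}.
\]
This is the source of the cross term in \eqref{tame density}.

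The remaining terms of $\nu\mathcal P_k$ are cubic: $|n||\tilde n||\tilde\tau|$, $|\nabla\tau||\nabla\tilde\tau||\tilde\tau|$, $\nu|\nabla\tau||\tilde\tau||\nabla\tilde n|$, $|\tilde nF[\tilde\tau]|$ and $|\tilde\tau F[\tilde n]|$. They are handled exactly as $I_{1,k},I_{2,k},I_{3,k}$ in Lemma \ref{lem: high-energy}. Work in polar coordinates and put the low-order factor in $L^\infty_\omega$ via \eqref{sobolev}. Distribute derivatives with Lemma \ref{lem:L2omega-product}, so that each product is bounded by
\[
\|\Gamma^{\le N_*}(\tau,u)\|_{L^\infty_rL^2_\omega}\times(\text{high-order }L^2_x\text{ factor})\times(\text{high-order }L^2_x\text{ factor}).
\]
One of the high factors is taken in $L^\infty_t$ (giving $E_N^{1/2}$) and the other in $L^2_t$, after gaining a derivative. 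The $L^2_t$ factor is $\|\nabla\Gamma^{\le N-1}\tau\|$ or $\|\nabla\Gamma^{\le N}u\|$, which is bounded by $\varepsilon^{-1/2}(G_N^{1/2}+D_N^{1/2})$ because $\mu,\nu\ge\varepsilon$. The trace factor in $L^2_t$ contributes $\varepsilon^{1/2}A_{N_*}^{1/2}$. Together with the leftover factor $\nu\le1$, this yields $CA_{N_*}^{1/2}(E_N+D_N+G_N)$; the $A_{N_*}$ power may be $1/2$ or better, and at worst one adjusts via the bound \eqref{Gamma low Gamma hi}.

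The main obstacle is $\tilde\tau F[\tilde n]$. Here $F[\tilde n]$ contains $\operatorname{div}F[\tilde u]$ and $\nabla\tau\cdot\nabla\tilde\tau$, which carry one more derivative than the energy controls. For the part $\nabla\tau\cdot\nabla\tilde\tau$, with $|k|\le N-1$ we have $\nabla\tilde\tau\in\nabla\Gamma^{\le N-1}\tau$, which is fine. For the part $\operatorname{div}F[\tilde u]$, I would integrate by parts to move the divergence onto $\tilde\tau$, giving $\nabla\tilde\tau\cdot F[\tilde u]$, and then use \eqref{estimate F u}. The viscous pieces $\nu\nabla\rho^{-1}\nabla^2\Gamma^{k}u$ need care because they carry up to $N+1$ derivatives on $u$. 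These I would also move by parts, or bound using $\nu^{1/2}$ times $D_N^{1/2}$, with the low factor $\nabla\tau$ in the trace norm.

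Finally, to obtain \eqref{GN}, insert (BA1), \eqref{EN DN} and \eqref{Gamma low Gamma hi}, then take square roots, absorbing via $\sqrt{a+b}\le\sqrt a+\sqrt b$. The terms $C(E_N+D_N)$ give $CM^{-1}\cdot$ (the bound from \eqref{EN DN}) after dividing by $M$. The cross term gives $C^{1/2}(D_NG_N)^{1/4}M^{-1}\lesssim C^{1/2}M^{1/2}\delta\varepsilon^{1/2}(\ln\varepsilon^{-1})^{-1}$. The $A$-term gives $8CM^{5/2}\delta^{1/2}$.
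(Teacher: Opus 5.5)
Your proposal is correct and follows the paper's proof essentially step by step. You multiply \eqref{parabolic tau} by $\nu$ and sum over $|k|\le N-1$, then bound the linear terms by $D_N$ and $E_N$, the viscous cross term by $D_N^{1/2}G_N^{1/2}$, and the cubic terms by $A_{N_*}^{1/2}E_N^{1/2}(D_N^{1/2}+G_N^{1/2})$ with the low-order factor in $L^2_tL^\infty_rL^2_\omega$, before inserting \eqref{EN DN}, (BA1) and \eqref{Gamma low Gamma hi}. Your integration by parts in $\tilde\tau\operatorname{div}F[\tilde u]$ is harmless but unnecessary: for $|k|\le N-1$ only $\nabla\Gamma^{\le N}u$ and $\nabla\Gamma^{\le N-1}\tau$ appear, so the paper bounds $\|F[\tilde n]\|_{L^1_tL^2_x}$ directly. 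Also, in the final constant, $8CM^{5/2}\delta^{1/2}$ comes half from $C(E_N+D_N)$ via \eqref{EN DN} and half from the $A_{N_*}$-term, not from the $A_{N_*}$-term alone.
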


\begin{proof}
Summing \eqref{parabolic tau} over
$|k|\leq N-1$  gives
\begin{equation} \label{4.10}
	\begin{aligned}
G _N (T) &\le  \nu  \int_0^T \int_{\mathbb{R}^3} |\nabla \Gamma^{\le N-1}\tau |^2 dxdt \\
&\lesssim
\nu  \int_0^T \int_{\mathbb{R}^3}  |\nabla \Gamma^{\le N-1} u_{cf} |^2 dxdt \\
& \quad + \nu   \sup\limits_{t \in [0,T]} \|\nabla \Gamma^{\le N-1}u_{cf}  (t)\|_{L_x^2}   \| \Gamma^{\le N-1}\tau(t) \|_{L_x^2}   \\
& \quad + \nu   \sum\limits_{|k|\le N-1} \mathcal P_{k}(T) .
\end{aligned}			
\end{equation}	
The nonlinear contributions to $\mathcal P_{k}(T) $ are the $L^1((0,T)\times \mathbb R^3) $-norms of terms of the forms
\begin{equation}
\begin{aligned}
&(1)\ n\cdot \tilde{n}\cdot \tilde{\tau},\qquad
(2)\ \nabla \tau\cdot \nabla \tilde{\tau}\cdot \tilde{\tau}, \qquad (3)\ \nu\,\nabla \tau\cdot \tilde{\tau}\cdot \nabla \tilde{n}\\
&(4)\ \nu\,\nabla  \tilde{\tau}\cdot \nabla \tilde{n},\qquad
(5)\ \tilde{n}\,F[\tilde{\tau}],\qquad
(6)\ \tilde{\tau}\,F[\tilde{n}],
\end{aligned}
\end{equation}
for $|k| \le N-1$. We estimate these terms in turn.

\medskip

\noindent\textbf{Estimate of (1).} By H\"older's inequality and the bound $|\tilde{n}|\lesssim |\nabla \tilde{u}|$, for $|k|\le N-1$ we have
\begin{equation}
\begin{aligned}
&  \quad \|n\cdot \tilde{n}\cdot \tilde{\tau}\|_{L^1((0,T)\times \mathbb R^3) } \\
&\lesssim \|\tilde{\tau}\|_{L^\infty(0,T; L^2_x) } \| n \cdot \tilde{n}\|_{L^1(0,T; L^2_x) } \\
& \lesssim \|\tilde{\tau}\|_{L^\infty(0,T; L^2_x) } \| \Gamma^{\leq  N_{*}} u\|_{L^2(0,T; L^\infty_r L^2_\omega ) }  \|\nabla \Gamma^{\leq N-1}u \|_{L^2(0,T; L^2_x) }  \\
& \lesssim  A_{N_{*}}^{\frac12} (T) E_N^{\frac12} (T) D_N^{\frac12}(T) .
\end{aligned}
\end{equation}

\medskip

\noindent\textbf{Estimate of (2).} Similarly, for $|k|\le N-1$, we have
\begin{equation}
\begin{aligned}
&  \quad \|\nabla \tau\cdot \nabla \tilde{\tau}\cdot \tilde{\tau}\|_{L^1((0,T)\times \mathbb R^3) }\\
&\lesssim \|\tilde{\tau}\|_{L^\infty(0,T; L^2_x) } \| \nabla \tau\cdot \nabla \tilde{\tau} \|_{L^1(0,T; L^2_x) } \\
& \lesssim \|\tilde{\tau}\|_{L^\infty(0,T; L^2_x) } \| \Gamma^{\leq N_{*}} \tau\|_{L^2(0,T; L^\infty_r L^2_\omega ) }  \|\nabla \Gamma^{\leq N-1}\tau \|_{L^2(0,T; L^2_x) }  \\
& \lesssim  A_{N_{*}}^{\frac12}(T)  E_N^{\frac12} (T) G_N^{\frac12}(T) .
\end{aligned}
\end{equation}

\medskip

\noindent\textbf{Estimate of (3).}  We also have 
\begin{equation}
\begin{aligned}
&  \quad \nu \|\nabla \tau \cdot \tilde{\tau}\cdot \nabla\tilde{n}\|_{L^1((0,T)\times \mathbb R^3) } \\
&\lesssim \nu\|\tilde{\tau}\|_{L^\infty(0,T; L^2_x) } \| \nabla \tau \cdot \nabla\tilde{n}\|_{L^1(0,T; L^2_x) } \\
& \lesssim \nu\|\tilde{\tau}\|_{L^\infty(0,T; L^2_x) } \| \Gamma^{\leq  N_{*}} \tau \|_{L^2(0,T; L^\infty_r L^2_\omega ) }  \|\nabla \Gamma^{\leq N}  u \|_{L^2(0,T; L^2_x) }  \\
& \lesssim  A_{N_{*}}^{\frac12} (T) E_N^{\frac12} (T) D_N^{\frac12}(T) .
\end{aligned}
\end{equation}

\medskip

\noindent\textbf{Estimate of (4).} For $|k| \leq N-1$, we have
\begin{equation}
\begin{aligned}
&  \quad \nu \|\nabla  \tilde{\tau}\cdot \nabla\tilde{n}\|_{L^1((0,T)\times \mathbb R^3) } \\
& \lesssim \nu \| \nabla \Gamma^{\leq N-1}  \tau \|_{L^2(0,T; L^2_x) }  \|\nabla \Gamma^{\leq N}  u \|_{L^2(0,T; L^2_x) }  \\
& \lesssim  D_N^{\frac12} (T) G_N^{\frac12}(T) .
\end{aligned}
\end{equation}

\medskip

\noindent\textbf{Estimate of (5).} Recalling \eqref{estimate F tau}, we have for $|k| \leq N-1$
\begin{equation}
\begin{aligned}
& \quad \|\tilde{n}F[\tilde{\tau}]\|_{L^1((0,T)\times \mathbb R^3) } \\
& \lesssim \|\tilde{n}\|_{L^\infty(0,T; L^2_x) }  \|F[\tilde{\tau}]\|_{L^1(0,T; L^2_x) } \\
& \lesssim  \|\nabla \Gamma^{\leq N-1} u \|_{L^\infty(0,T; L^2_x) } \| \Gamma^{\leq N_{*}} (\tau,u) \|_{L^2(0,T; L^\infty_r L^2_\omega ) }  \|\nabla \Gamma^{\leq N-1}  (\tau,u) \|_{L^2(0,T; L^2_x) }  \\
& \lesssim  A_{N_{*}}^{\frac12} (T) E_N^{\frac12} (T) \left(D_N^{\frac12}(T) + G_N^{\frac12}(T)\right).
\end{aligned}
\end{equation}

\medskip

\noindent\textbf{Estimate of (6).} Recalling \eqref{def F n m} and \eqref{def F tau u}, we can obtain for $|k| \leq N-1$
\begin{equation}
\begin{aligned}
& \quad \|\tilde{\tau }F[\tilde{n}]\|_{L^1((0,T)\times \mathbb R^3) } \\
& \lesssim \|\tilde{\tau}\|_{L^\infty(0,T; L^2_x) }  \|F[\tilde{n}]\|_{L^1(0,T; L^2_x) } \\
& \lesssim  \| \Gamma^{\leq N-1} \tau \|_{L^\infty(0,T; L^2_x) } \| \Gamma^{\leq  N_{*}} (\tau,u) \|_{L^2(0,T; L^\infty_r L^2_\omega ) } \\
& \qquad \times \Bigl(
\|\nabla \Gamma^{\leq N-1} \tau\|_{L^2(0,T; L^2_x)}
+
\|\nabla \Gamma^{\leq N}u\|_{L^2(0,T; L^2_x)}
\Bigr) \\
& \lesssim  A_{N_{*}}^{\frac12} (T) E_N^{\frac12} (T) \left(D_N^{\frac12}(T) + G_N^{\frac12}(T)\right).
\end{aligned}
\end{equation}

\medskip

Combining \eqref{4.10} with the estimates of (1)-(6) gives \eqref{tame density}. Furthermore, substituting \eqref{Gamma low Gamma hi} into \eqref{tame density} and using (BA1), we obtain
\begin{equation} 
	\begin{aligned}
G_N^{\frac{1}{2}} (T)
& \le \left( C + 8CM^{\frac{7}{2}}\delta^{\frac{1}{2}} + C^{\frac{1}{2}} M^{\frac{3}{2}} \right) \delta \varepsilon^{\frac{1}{2}} ( \ln \varepsilon^{-1} )^{-1},
\end{aligned}			
\end{equation}	
which gives \eqref{GN}.

\end{proof}

Choose the bootstrap constant $M$ sufficiently large such that
\begin{equation} \label{take of M}
M\geq 8C^{\frac{3}{2}}
\end{equation}	
and $\delta$ sufficiently small but independent of $\varepsilon$; for example, take
\begin{equation} \label{take of delta}
\delta^{\frac{1}{2}} \le 2^{-5} C^{-\frac{1}{2}} M^{-\frac52},
\end{equation}	 
such that
\[ 
C^{\frac{1}{2}}
+
4 C^{\frac{1}{2}}M^{\frac72}\delta^{\frac{1}{2}} 
\leq \frac M4,
\]
and 
\[  C M^{-1} + 8C M^{\frac52}\delta^{\frac{1}{2}} + C^{\frac{1}{2}} M^{\frac{1}{2}} \leq \frac M4.
\]
Consequently, combining \eqref{EN DN} and \eqref{GN} yields
\[
E_N^{\frac{1}{2}}(T)+D_N^{\frac{1}{2}}(T) + M^{-1} G_N^{\frac{1}{2}}(T)
\leq
\frac M2
\delta\varepsilon^{\frac{1}{2}}
(\ln\varepsilon^{-1})^{-1},
\]
which improves the bootstrap assumption (BA1) and closes the continuity argument.

\medskip

\subsection{Parabolic estimate for $\Gamma^{\leq N_{\mathrm{lo}}} u_{df}$}

We now derive a low-order parabolic estimate for the divergence-free part of the velocity. Define the weighted energy
\begin{equation} \label{def of Mk}
	\begin{aligned}
\mathcal{M}_{k} :=  \left\|
\langle r\rangle^{-\frac12}\,\nabla (\Gamma^{\leq k}\tau,\Gamma^{\leq  k}u_{cf})
\right\|^2_{L^2(0,T; L^2_x)}.
\end{aligned}			
\end{equation}	
The resulting estimate is as follows.
\begin{lem}[low-order parabolic estimate]\label{lem:tame-low-df}
Let $N\geq 10$, and define
\[
P_{N_{\mathrm{lo}}}(T):=
 \|\Gamma^{\leq N_{\mathrm{lo}}}u_{df}\|^2_{L^\infty_tL^2_x}
 + \mu \|\nabla\Gamma^{\leq N_{\mathrm{lo}}}u_{df}\|^2_{L^2(0,T; L^2_x)}.
\]
Under the assumptions of
Lemma \ref{lem: high-energy}, we have
\begin{equation} \label{P BA1-3 1}
\begin{aligned}
&  \qquad P_{N_{\mathrm{lo}}}(T) \le P_{N_{\mathrm{lo}}}(0) \\
& + C\left[ P_{N_{\mathrm{lo}}}(T) A_{N_*}^{\frac 12}  (T) + P^{\frac 12}_{N_{\mathrm{lo}}}(T) D_N (T) + \varepsilon^{-\frac 12} P_{N_{\mathrm{lo}}}(T) \mathcal{M}^{\frac 12}_{N_{\mathrm{lo}}}(T) + \varepsilon^{-1}  P^{\frac 32}_{N_{\mathrm{lo}}}(T) \right] .
\end{aligned}
\end{equation}
where $C>0$ is independent of $M, T,\mu$, and $\nu$. Moreover, under (BA1)-(BA3), if $M$ is sufficiently large and $\delta$ is sufficiently small, then
\begin{equation} \label{P BA1-3 2}
\begin{aligned}
P^{\frac 12}_{N_{\mathrm{lo}}}(T) \le  \frac M2 \delta \varepsilon.
\end{aligned}
\end{equation}

\end{lem}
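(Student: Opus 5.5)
Apply $\mathbb P_{df}$ to the commuted momentum equation \eqref{eq tilde tau u}$_2$ with $|k|\le N_{\mathrm{lo}}$. Since $\Gamma$ commutes with the Helmholtz projections and $\mathbb P_{df}\nabla=0$, this gives
\[
\partial_t\tilde u_{df}-\mu\Delta\tilde u_{df}=\mathbb P_{df}\Big[(\bar\mu-\mu)\Delta\tilde u-u\cdot\nabla\tilde u-b^{-1}\tau\nabla\tilde\tau+F[\tilde u]\Big].
\]
Here $\bar\mu-\mu=\mu(\rho^{-1}-1)$. The term $b^{-1}(\tau+1)\nabla\tilde\tau$ reduces to $b^{-1}\tau\nabla\tilde\tau$ after the projection kills $b^{-1}\nabla\tilde\tau$. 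Pairing with $\tilde u_{df}$ in $L^2$ (using that $\mathbb P_{df}$ is self-adjoint), integrating in time and summing over $|k|\le N_{\mathrm{lo}}$ produces $P_{N_{\mathrm{lo}}}(T)\le P_{N_{\mathrm{lo}}}(0)+\sum_k\int_0^T|\langle \text{RHS},\tilde u_{df}\rangle|\,dt$. The plan is then to sort the right-hand side into the four groups of \eqref{P BA1-3 1} by splitting every quadratic term into (df,df), (df,cf/$\tau$) and (cf/$\tau$,cf/$\tau$) interactions. Lemma \ref{lem:L2omega-product} in $L^2_\omega$ puts the lower factor in $L^\infty_rL^2_\omega$. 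Since $N_{\mathrm{lo}}+N_*\le N$, all derivatives remain within the high-order budget.

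\textbf{Viscous and $\tau$-weighted terms.} Integrate by parts once in $\mu(\rho^{-1}-1)\Delta\tilde u\cdot\tilde u_{df}$ and in the lower-order $\mu\,\Gamma^c(\rho^{-1})\Delta\Gamma^d u$ parts of $F[\tilde u]$. This leaves $\mu\,|\Gamma^{\le N_*}\tau|\,|\nabla\tilde u|\,|\nabla\tilde u_{df}|$ plus $\mu|\nabla\Gamma\tau||\nabla \tilde u||\tilde u_{df}|$. Putting $\Gamma^{\le N_*}\tau$ in $L^2_tL^\infty_rL^2_\omega$ gives $\mu\cdot\varepsilon^{1/2}A^{1/2}_{N_*}\cdot\|\nabla\tilde u\|_{L^2L^2}\|\nabla\tilde u_{df}\|_{L^\infty L^2}$-type bounds. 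I would instead arrange them as $A^{1/2}_{N_*}P$ when $\nabla\tilde u$ is the df part. When it is the cf part, I would bound them by $P^{1/2}D_N$, placing $\tilde u_{df}$ in $L^\infty L^2$ and the two cf/$\tau$ factors in the high-order dissipation. The same bound handles the $(\lambda+\mu)\nabla\operatorname{div}$ contributions in $F[\tilde u]$, since only $u_{cf}$ enters them.

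\textbf{Transport terms.} The genuinely quadratic terms $u\cdot\nabla\tilde u$, $\tau\nabla\tilde\tau$ and $\Gamma^cu\cdot\nabla\Gamma^du$ are handled by splitting.
\begin{itemize}
\item (df,df): bound by $\|\Gamma^{\le N_{\mathrm{lo}}}u_{df}\|_{L^2L^\infty_rL^2_\omega}\|\nabla \Gamma^{\le N_{\mathrm{lo}}} u_{df}\|_{L^2L^2}\|\tilde u_{df}\|_{L^\infty L^2}$. By \eqref{ineq r f} the first factor is $\lesssim\|\nabla\nabla^{\le1}\Gamma u_{df}\|_{L^2L^2}\lesssim\mu^{-1/2}P^{1/2}$, so the total is $\lesssim\varepsilon^{-1}P^{3/2}$.
\item (cf/$\tau$, anything) against $\tilde u_{df}$: the key move is to put the weight $\langle r\rangle^{-1/2}$ on $\nabla\Gamma^{\le N_{\mathrm{lo}}}(\tau,u_{cf})$ and $\langle r\rangle^{1/2}$ on the df factors. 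By \eqref{ineq r f}, $\|\langle r\rangle^{1/2}\Gamma u_{df}\|_{L^\infty_rL^2_\omega}\lesssim\|\nabla\nabla^{\le1}\Gamma u_{df}\|_{L^2}$, and after Hölder in $t$ this gives $\mathcal M^{1/2}_{N_{\mathrm{lo}}}\cdot\mu^{-1/2}P^{1/2}\cdot P^{1/2}$, i.e. $\varepsilon^{-1/2}P\mathcal M^{1/2}$.
\item (cf/$\tau$,cf/$\tau$) directly against $\tilde u_{df}$: bound by $P^{1/2}D_N$ as above. For $\tau\nabla\tilde\tau$ one may alternatively use $G_N\le M^2D$-size via (BA1).
\end{itemize}

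\textbf{Closing.} Under (BA1)--(BA3) and \eqref{Gamma low Gamma hi}:
\begin{itemize}
\item $A^{1/2}_{N_*}\le 4CM^3\delta$ is small, so $CA^{1/2}_{N_*}P$ is absorbed;
\item $P^{1/2}D_N\le P^{1/2}M^2\delta^2\varepsilon(\ln\varepsilon^{-1})^{-2}\le M\delta\varepsilon\cdot M^2\delta^2\varepsilon$;
\item $\varepsilon^{-1/2}\mathcal M^{1/2}P\le M^3\delta(\ln\varepsilon^{-1})^{-1/2}P$;
\item $\varepsilon^{-1}P^{3/2}\le M\delta P$.
\end{itemize}
Hence $P\le P(0)+C(M^3\delta P+M^3\delta^3\varepsilon^2)$. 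Since $P(0)\le\delta^2\varepsilon^2$, this gives $P^{1/2}\le 2\delta\varepsilon\le\frac M2\delta\varepsilon$ for $M\ge4$ and $\delta\ll M^{-3}$.

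The main obstacle is the cross term $u_{cf}\cdot\nabla u_{df}$ and its commutator analogues. Without the weighted splitting, this term costs $\varepsilon^{-1/2}\|u_{cf}\|\sim\varepsilon^{0}$, which is not small, since the cf part is only of size $\varepsilon^{1/2}$. Making the $\langle r\rangle^{\mp1/2}$ weight transfer rigorous, including near $r=0$ and at the highest angular order $\tilde\Omega^4$ through Lemma \ref{lem:L2omega-product}, is the step to verify carefully.
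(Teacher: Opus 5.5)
Your $\mathbb P_{df}$ energy identity is equivalent to the paper's $L^2$ estimate, which pairs the vorticity equation for $\tilde m^l$ with $|\nabla|^{-2}\tilde m^l$. The argument breaks down at your third transport bullet, where the (cf/$\tau$, cf/$\tau$) interactions against $\tilde u_{df}$ are bounded by $P^{1/2}_{N_{\mathrm{lo}}}D_N$ ``as above''. The inviscid ones, $\sum_{c+d=k}\Gamma^c u_{cf}\cdot\nabla\Gamma^d u_{cf}$ and $b^{-1}\sum_{c+d=k}\Gamma^c\tau\,\nabla\Gamma^d\tau$, carry no viscosity prefactor. The natural H\"older bound is therefore
\[
\|\tilde u_{df}\|_{L^\infty_tL^2_x}\,\|\Gamma^{c}(\tau,u_{cf})\|_{L^2_tL^\infty_x}\,\|\nabla\Gamma^{d}(\tau,u_{cf})\|_{L^2_tL^2_x}\lesssim \nu^{-1}P^{1/2}_{N_{\mathrm{lo}}}\,(D_N+G_N).
\]
For $\nu\sim\varepsilon$ this is only $O\bigl(M^5\delta^3\varepsilon(\ln\varepsilon^{-1})^{-2}\bigr)$ under (BA1)--(BA2), far above the target $\delta^2\varepsilon^2$. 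Using $\varepsilon^{1/2}A^{1/2}_{N_*}$ or the Morawetz norm for one factor improves this only to $O(\varepsilon^{3/2})$. So your closing inequality $P\le P(0)+C(M^3\delta P+M^3\delta^3\varepsilon^2)$ is not justified. These terms cannot be estimated at the $\varepsilon$-scale; they have to be removed structurally.

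The missing observation is that they are exact gradients. Recombine the $c=0$ terms you kept on the left with the commutators in $F[\tilde u]$. The inviscid part of the projected equation is then $\mathbb P_{df}\Gamma^k\bigl(u\cdot\nabla u+b^{-1}(\tau+1)\nabla\tau\bigr)$. Now $\Gamma$ maps gradients to gradients, $b^{-1}(\tau+1)\nabla\tau=\nabla\bigl(\frac{(\tau+1)^2}{2b}\bigr)$, and $u\cdot\nabla u=\nabla\frac{|u|^2}{2}-u\times(\nabla\times u_{df})$. Hence this equals $-\mathbb P_{df}\Gamma^k\bigl(u\times(\nabla\times u_{df})\bigr)$, and every surviving inviscid term contains a factor $\nabla\times\Gamma^d u_{df}$. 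You used $\mathbb P_{df}\nabla=0$ only for the linear term $b^{-1}\nabla\tilde\tau$. The paper's passage to the curl equation does this automatically: the curl annihilates gradients, and the vector identity produces $\nabla\times(\Gamma^c u\times(\nabla\times\Gamma^d u_{df}))$ in \eqref{def of F m 2}.

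Your viscous step needs a similar structural fix. First, you dropped $\mathbb P_{df}[(\bar\lambda+\bar\mu)\nabla\operatorname{div}\tilde u]$, which does not vanish because $\rho^{-1}$ is variable. Second, bounding $\mu(\rho^{-1}-1)\Delta\tilde u_{cf}$ and the $(\lambda+\mu)$-terms separately by $D_N$ fails when $\mu\gg\nu$, which is allowed since only $\min\{\mu,\nu\}=\varepsilon$ is prescribed; $D_N$ weights the cf part only by $\nu$. Write the viscous operator as $\rho^{-1}\bigl(\nu\nabla\operatorname{div}-\mu\nabla\times\nabla\times\bigr)$, as the paper does. Then the ($\tau$, cf) viscous interactions carry the coefficient $\nu$ and become harmless, of size $P^{1/2}_{N_{\mathrm{lo}}}D_N^{1/2}G_N^{1/2}$.

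Finally, your claim $N_{\mathrm{lo}}+N_*\le N$ is false. What you actually need is that the low factor has order at most $[N_{\mathrm{lo}}/2]+3\le N_*$.
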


\begin{proof}
Let $\tilde m^{l} = \nabla \times  \Gamma^{\leq N_{\mathrm{lo}}} u_{df}$. By \eqref{eq tilde n m}, we obtain the evolution equation for $\tilde{m}^{l}$:
\begin{equation} \label{eq tilde m l}
\tilde{m}^{l}_t - \bar{\mu}\Delta \tilde{m}^{l} + (u\cdot\nabla)\tilde{m}^{l} = F[\tilde{m}^{l}],
\end{equation}
where the nonlinear source term is given by
\begin{equation}
\begin{aligned}
F[\tilde{m}^{l}] &= \nabla\times \left(\Gamma^{\leq N_{\mathrm{lo}}}u_{df}\right)_t - \nabla\times \left(\bar{\mu}\Delta \Gamma^{\leq N_{\mathrm{lo}}}u_{df}\right) + \nabla\bar{\mu}\Delta \Gamma^{\leq N_{\mathrm{lo}}}u_{df} \\
&\quad + \nabla\times \big((u\cdot\nabla)\Gamma^{\leq N_{\mathrm{lo}}}u_{df}\big) - \big((\nabla\times u)\cdot\nabla\big)\Gamma^{\leq N_{\mathrm{lo}}}u_{df}.
\end{aligned}
\end{equation}
Using \eqref{def F n m}, we have
\begin{equation} \label{def of F m l}
\begin{aligned}
F[\tilde m^l]
={}&
\sum_{\substack{|c|+|d|\leq N_{\mathrm{lo}}}}
\nabla\Gamma^c(\rho^{-1})
\times
\Big(
\nu\nabla(\nabla\cdot\Gamma^d u_{cf})
-\mu\nabla\times\nabla\times\Gamma^d u_{df}
\Big)
\\
&+\sum_{\substack{|c|+|d|\leq N_{\mathrm{lo}}}} [ (\nabla\times\Gamma^d u_{df}) \cdot \nabla  ] \Gamma^c u
-
\sum_{\substack{|c|+|d|\leq N_{\mathrm{lo}}}}
(\nabla\times\Gamma^d u_{df}) (\nabla
\cdot \Gamma^c u_{cf})
\\
&+
\sum_{\substack{|c|+|d|\leq N_{\mathrm{lo}}, |c| \geq 1}}
\Gamma^c(\rho^{-1})
\mu\Delta(\nabla\times\Gamma^d u_{df})
\\
&-
\sum_{\substack{|c|+|d|\leq N_{\mathrm{lo}}}}
(\Gamma^c u\cdot\nabla)
(\nabla\times\Gamma^d u_{df}).
\end{aligned}
\end{equation}
Using the vector identity
\[ \nabla \times (a \times b) = a \nabla \cdot b - b \nabla \cdot a + ( b \cdot \nabla ) a - ( a \cdot \nabla ) b, \]
we combine the second, third, and last sums of \eqref{def of F m l} through
\[
\begin{aligned}
&  \left[
(\nabla\times\Gamma^d u_{df})\cdot\nabla
\right]\Gamma^c u
-(\nabla\times\Gamma^d u_{df})
\operatorname{div}\Gamma^c u_{cf}
-(\Gamma^c u\cdot\nabla)
(\nabla\times\Gamma^d u_{df})\\
=&\nabla\times\left(
\Gamma^c u\times(\nabla\times\Gamma^d u_{df})
\right),\\
\end{aligned}
\]
Consequently, we may rewrite \eqref{def of F m l} as
\begin{equation} \label{def of F m 2}
\begin{aligned}
F[\tilde m^l]
={}&
\sum_{\substack{|c|+|d|\leq N_{\mathrm{lo}}}}
\nabla\Gamma^c(\rho^{-1})
\times
\Big(
\nu\nabla(\nabla\cdot\Gamma^d u_{cf})
-\mu\nabla\times\nabla\times\Gamma^d u_{df}
\Big)
\\
&+\sum_{\substack{|c|+|d|\leq N_{\mathrm{lo}}}} \nabla\times\left(
\Gamma^c u\times(\nabla\times\Gamma^d u_{df})
\right)
\\
&+
\sum_{\substack{|c|+|d|\leq N_{\mathrm{lo}}, |c| \geq 1}}
\Gamma^c(\rho^{-1})
\mu\Delta(\nabla\times\Gamma^d u_{df}).
\end{aligned}
\end{equation}

To obtain the energy estimate for $\tilde{m}$, we multiply \eqref{eq tilde m l} by $-\nabla\times \Gamma^{\leq N_{\mathrm{lo}}}u_{df}/|\nabla|^2$, which yields
\begin{equation}
\begin{aligned}
&\quad \frac12\frac{d}{dt}\|\Gamma^{\leq N_{\mathrm{lo}}}u_{df}\|_{L^2_x}^2 - \int_{\mathbb{R}^3} \bar\mu \Delta \tilde{m}^{l} \cdot |\nabla|^{-2} \tilde{m}^{l} dx \\
& \lesssim \int_{\mathbb{R}^3} (u \cdot \nabla ) \tilde{m}^{l} \cdot |\nabla|^{-2} \tilde{m}^{l} dx  + \int_{\mathbb{R}^3}  F[\tilde{m}^{l}]  \cdot |\nabla|^{-2} \tilde{m}^{l}  dx.
\end{aligned}
\end{equation}
Integrating in time over $[0,T]$,  we obtain
\begin{equation} \label{BA2 close 1}
\begin{aligned}
& \quad   \|\Gamma^{\leq N_{\mathrm{lo}}}u_{df}(T)\|_{ L^2_x}^2 + \mu \int_0^T\|\nabla \Gamma^{\leq N_{\mathrm{lo}}}u_{df}\|_{L^2_{x}}^2 dt \\
&\le \|\Gamma^{\leq N_{\mathrm{lo}}}u_{df}(0)\|_{L^2_x}^2 + C \sum\limits_{i=1}^4
\sum\limits_{j=1}^2 \mathcal{J}_{ij}(T),
\end{aligned}
\end{equation}
where 
\begin{equation}
\begin{aligned}
\mathcal{J}_{11}(T) & = \int_0^T \int_{\mathbb{R}^3} |
\Gamma^{\leq N_{\mathrm{lo}}}u_{df} | \cdot |\nabla \Gamma^{\leq N_{\mathrm{lo}}}u_{df} | \cdot | \nabla \Gamma^{\leq[N_{\mathrm{lo}}/2]+1} \tau + \Gamma^{\leq[N_{\mathrm{lo}}/2]+1}  u  | dxdt, \\
\mathcal{J}_{12}(T) & = \int_0^T \int_{\mathbb{R}^3} |
\Gamma^{\leq N_{\mathrm{lo}}}u_{df} | \cdot |\nabla \Gamma^{\leq[N_{\mathrm{lo}}/2]+1}  u_{df} | \cdot |  \nabla \Gamma^{\leq N_{\mathrm{lo}}} \tau + \Gamma^{\leq N_{\mathrm{lo}}} u  | dxdt, \\
\mathcal{J}_{21}(T) & = 
\nu \int_0^T \int_{\mathbb{R}^3} | \Gamma^{\leq N_{\mathrm{lo}}}u_{df} | \cdot  |\nabla \Gamma^{\leq[N_{\mathrm{lo}}/2]+1}  \tau |  \cdot |  \nabla \Gamma^{\leq N_{\mathrm{lo}}}u_{cf} | dxdt, \\
\mathcal{J}_{22}(T) & =  \nu \int_0^T \int_{\mathbb{R}^3} | \Gamma^{\leq N_{\mathrm{lo}}}u_{df} | \cdot  |\nabla \Gamma^{\leq N_{\mathrm{lo}}}  \tau |  \cdot |  \nabla \Gamma^{\leq[N_{\mathrm{lo}}/2]+1}u_{cf} | dxdt, \\
\mathcal{J}_{31}(T) & = \int_0^T \int_{\mathbb{R}^3}  |
|\nabla|^{-1}\Gamma^{\leq N_{\mathrm{lo}}}u_{df} |  \cdot | \nabla \Gamma^{\leq N_{\mathrm{lo}}}u_{df}  | \cdot  | \nabla^2  \Gamma^{\leq[N_{\mathrm{lo}}/2]+1} \tau + \nabla u_{cf}  | dxdt ,\\
\mathcal{J}_{32}(T) & =  \mu \int_0^T \int_{\mathbb{R}^3}  |
|\nabla|^{-1}\Gamma^{\leq N_{\mathrm{lo}}}u_{df} |  \cdot | \nabla \Gamma^{\leq[N_{\mathrm{lo}}/2]+1}u_{df}  | \cdot  | \nabla^2  \Gamma^{\leq N_{\mathrm{lo}}} \tau  | dxdt, \\
\mathcal{J}_{41}(T) & =  \nu  \int_0^T \int_{\mathbb{R}^3}  |
|\nabla|^{-1}\Gamma^{\leq N_{\mathrm{lo}}}u_{df} |  \cdot | \nabla \Gamma^{\leq N_{\mathrm{lo}}}u_{cf}  | \cdot  | \nabla^2  \Gamma^{\leq[N_{\mathrm{lo}}/2]+1} \tau | dxdt, \\
\mathcal{J}_{42}(T) & =  \nu \int_0^T \int_{\mathbb{R}^3}  |
|\nabla|^{-1}\Gamma^{\leq N_{\mathrm{lo}}}u_{df} |  \cdot | \nabla \Gamma^{\leq[N_{\mathrm{lo}}/2]+1}u_{cf}  | \cdot  | \nabla^2  \Gamma^{\leq N_{\mathrm{lo}}} \tau  | dxdt. \\
\end{aligned}
\end{equation}
We now estimate each nonlinear term $\mathcal{J}_{ij}(T) \ (1 \le i \le 4, \ 1 \le j \le 2)$.

\medskip

\noindent\textbf{Estimate of $\mathcal{J}_{11}(T)$.} We have
\begin{equation}
\begin{aligned}
\mathcal{J}_{11}(T) & \lesssim \left\|\Gamma^{\leq N_{\mathrm{lo}}}u_{df}\right\|_{L^\infty(0,T;L^2_x)} \cdot \varepsilon^{-\frac 12} 
\left\| \Gamma^{\leq[N_{\mathrm{lo}}/2]+3}(\nabla \tau,u)\right\|_{L^2(0,T;L^\infty_r L^2_\omega)}\\
& \qquad \cdot \varepsilon^{\frac 12}
\left\|\nabla \Gamma^{\leq N_{\mathrm{lo}}}u_{df}\right\|_{L^2(0,T; L^2_x)} \\
& \lesssim P_{N_{\mathrm{lo}}} (T)  A_{N_*}^{\frac 12} (T).
\end{aligned}
\end{equation}

\medskip
\noindent\textbf{Estimate of $\mathcal{J}_{12}(T)$.} We use \eqref{ineq r f} to get
\begin{equation}
\begin{aligned}
\mathcal{J}_{12}(T) & \lesssim \left\|\Gamma^{\leq N_{\mathrm{lo}}}u_{df}\right\|_{L^\infty(0,T;L^2_x)} \cdot \left\| \langle r \rangle^{\frac 12} \nabla  \Gamma^{\leq[N_{\mathrm{lo}}/2]+3} u_{df}\right\|_{L^2(0,T;L^\infty_r L^2_\omega)} \\
& \qquad \cdot \left(  \left\| \langle r \rangle^{-\frac 12} \nabla \Gamma^{ \le N_{\mathrm{lo}}} (\tau, u_{cf} )   \right\|_{L^2(0,T; L^2_x)} + \| \nabla \Gamma^{\le N_{\mathrm{lo}}} u_{df} \|_{L^2(0,T; L^2_x)} \right) \\
& \lesssim \varepsilon^{-1}  P^{\frac 32}_{N_{\mathrm{lo}}} (T)  + \varepsilon^{-\frac 12}  P_{N_{\mathrm{lo}}} (T) \mathcal{M}^{\frac 12}_{N_{\mathrm{lo}}} (T)  .
\end{aligned}
\end{equation}

\medskip

\noindent\textbf{Estimate of $\mathcal{J}_{21}(T)$.}
Using \eqref{sobolev}, we have
\begin{equation}
\begin{aligned}
\mathcal{J}_{21}(T)
& \lesssim \left\| \Gamma^{\leq N_{\mathrm{lo}}}u_{df} \right\|_{L^\infty(0,T;L^2_x)} \cdot \nu^{\frac 12} \left\| \nabla \Gamma^{ \leq[N_{\mathrm{lo}}/2]+3} \tau \right\|_{L^2(0,T; L^2_x)} \\
& \quad \cdot \nu^{\frac 12} \left\| \nabla \Gamma^{\leq N_{\mathrm{lo}}}u_{cf} \right\|_{L^2(0,T; L^2_x)} \\
& \lesssim P^{\frac 12}_{N_{\mathrm{lo}}} (T)  D_{N} (T).
\end{aligned}
\end{equation}

\medskip
\noindent\textbf{Estimate of $\mathcal{J}_{22}(T)$.}
Similarly, we have
\begin{equation}
\begin{aligned}
\mathcal{J}_{22}(T)
& \lesssim \left\| \Gamma^{\leq N_{\mathrm{lo}}}u_{df} \right\|_{L^\infty(0,T;L^2_x)} \cdot \nu^{\frac 12} \left\| \nabla \Gamma^{ \leq[N_{\mathrm{lo}}/2]+3} u_{cf} \right\|_{L^2(0,T; L^2_x)} \\
& \quad \cdot \nu^{\frac 12} \left\| \nabla \Gamma^{\leq N_{\mathrm{lo}}} \tau \right\|_{L^2(0,T; L^2_x)} \\
& \lesssim P^{\frac 12}_{N_{\mathrm{lo}}} (T)  D_{N} (T).
\end{aligned}
\end{equation}

\medskip

\noindent\textbf{Estimate of $\mathcal{J}_{31}(T)$.} By \eqref{ineq r f}, we have
\begin{equation}
\begin{aligned}
\mathcal{J}_{31}(T)
& \lesssim \left\| r^{\frac{1}{2}}
|\nabla|^{-1}\Gamma^{\leq N_{\mathrm{lo}}}u_{df}  \right\|_{L^\infty (0,T; L^\infty_r L^2_\omega) } \left\| \nabla \Gamma^{\leq N_{\mathrm{lo}}}u_{df} \right\|_{L^2(0,T; L^2_x)} \\
& \qquad \cdot\left\| r^{-\frac{1}{2}} ( \nabla^2  \Gamma^{\leq[N_{\mathrm{lo}}/2]+3} \tau + \nabla \Gamma^2 u_{cf} ) \right\|_{L^2(0,T; L^2_x)}   \\
& \lesssim \varepsilon^{-\frac{1}{2}}  P_{N_{\mathrm{lo}}} (T)  \mathcal{M}^{\frac 12}_{N_{\mathrm{lo}}} (T).
\end{aligned}
\end{equation}

\medskip

\noindent\textbf{Estimate of $\mathcal{J}_{32}(T)$.} 
By \eqref{ineq r f}, \eqref{hardy} and the interpolation inequality
\begin{equation} \label{interpolation}
\| f \|_{\dot{H}^{\frac{1}{2}}_x } \lesssim \| f \|^{\frac{1}{2}}_{L^2_x } \| \nabla f \|^{\frac{1}{2}}_{L^2_x },  \ \forall f \in \dot{H}^1 (\mathbb{R}^3) \cap L^2(\mathbb{R}^3),
\end{equation}
we have
\begin{equation}
\begin{aligned}
\mathcal{J}_{32}(T)
& \lesssim \left\| r^{\frac{1}{2}}
|\nabla|^{-1}\Gamma^{\leq N_{\mathrm{lo}}}u_{df}  \right\|_{L^\infty (0,T; L^\infty_r L^2_\omega) } \cdot \mu^{\frac 12} \left\| \nabla^2 \Gamma^{\leq N_{\mathrm{lo}}} \tau \right\|_{L^2(0,T; L^2_x)}   \\
& \qquad \cdot \mu^{\frac 12} \left\| r^{-\frac{1}{2}} \nabla  \Gamma^{\leq[N_{\mathrm{lo}}/2]+3} u_{df}   \right\|_{L^2(0,T; L^2_x)} \\
& \lesssim \left\|  \Gamma^{\leq N_{\mathrm{lo}}}u_{df}  \right\|_{L^\infty (0,T; L^\infty_r L^2_\omega) } \cdot \mu^{\frac 12} \left\| \nabla^2 \Gamma^{\leq N_{\mathrm{lo}}} \tau \right\|_{L^2(0,T; L^2_x)}   \\
& \qquad \cdot \mu^{\frac{1}{4}} \left\|\nabla  \Gamma^{\leq[N_{\mathrm{lo}}/2]+3} u_{df}   \right\|^{\frac 12}_{L^2(0,T; L^2_x)}  \cdot \mu^{\frac{1}{4}} \left\|\nabla^2  \Gamma^{\leq[N_{\mathrm{lo}}/2]+3} u_{df}   \right\|^{\frac 12}_{L^2(0,T; L^2_x)} \\
& \lesssim P_{N_{\mathrm{lo}}} (T)  D^{\frac 12}_N (T).
\end{aligned}
\end{equation}

\medskip

\noindent\textbf{Estimate of $\mathcal{J}_{41}(T)$.}
We also have
\begin{equation}
\begin{aligned}
\mathcal{J}_{41}(T)
& \lesssim \left\| r^{\frac{1}{2}}
|\nabla|^{-1}\Gamma^{\leq N_{\mathrm{lo}}}u_{df}  \right\|_{L^\infty (0,T; L^\infty_r L^2_\omega) }  \\
& \qquad \cdot  \nu^{\frac 12}  \left\| r^{-\frac{1}{2}}  \nabla^2 \Gamma^{\leq[N_{\mathrm{lo}}/2]+3} \tau \right\|_{L^2(0,T; L^2_x)} \cdot  \nu^{\frac 12}  \left\| \nabla \Gamma^{\leq N_{\mathrm{lo}}}u_{cf} \right\|_{L^2(0,T; L^2_x)} \\
& \lesssim \left\| \Gamma^{\leq N_{\mathrm{lo}}}u_{df}  \right\|_{L^\infty (0,T; L^\infty_r L^2_\omega) }   \cdot  \nu^{\frac 12}  \left\| \nabla \Gamma^{\leq N_{\mathrm{lo}}}u_{cf} \right\|_{L^2(0,T; L^2_x)} \\
& \qquad \cdot  \nu^{\frac 12}  \left\| \nabla^2 \Gamma^{\leq[N_{\mathrm{lo}}/2]+3} \tau \right\|^{\frac 12}_{L^2(0,T; L^2_x)} \left\| \nabla^3 \Gamma^{\leq[N_{\mathrm{lo}}/2]+3} \tau \right\|^{\frac 12}_{L^2(0,T; L^2_x)} \\
& \lesssim P^{\frac 12}_{N_{\mathrm{lo}}} (T)  D_N (T).
\end{aligned}
\end{equation}

\medskip

\noindent\textbf{Estimate of $\mathcal{J}_{42}(T)$.}
Similarly,
\begin{equation}
\begin{aligned}
\mathcal{J}_{42}(T)
& \lesssim \left\| r^{\frac{1}{2}}
|\nabla|^{-1}\Gamma^{\leq N_{\mathrm{lo}}}u_{df}  \right\|_{L^\infty (0,T; L^\infty_r L^2_\omega) }  \\
& \qquad \cdot  \nu^{\frac 12}  \left\| r^{-\frac{1}{2}}  \nabla^2 \Gamma^{\leq[N_{\mathrm{lo}}/2]+2} \tau \right\|_{L^2(0,T; L^2_x)} \cdot  \nu^{\frac 12}  \left\| \nabla \Gamma^{\leq[N_{\mathrm{lo}}/2]+1}u_{cf} \right\|_{L^2(0,T; L^2_x)} \\
& \lesssim P^{\frac 12}_{N_{\mathrm{lo}}} (T)  D_N (T).
\end{aligned}
\end{equation}

Combining \eqref{BA2 close 1} and the estimates of $\mathcal{J}_{ij}(T)$, we obtain \eqref{P BA1-3 1}.
Choosing $M$ and $\delta$ as in \eqref{take of M} and \eqref{take of delta}, respectively, gives \eqref{P BA1-3 2}. Consequently, the bootstrap assumption (BA2) is improved.

\end{proof}

\medskip

\subsection{Morawetz estimate for $\Gamma^{\leq N_{\mathrm{lo}}}\tau$ and $\Gamma^{\leq N_{\mathrm{lo}}}u_{cf}$}

We now derive the weighted spacetime estimate for the low-order compressible part of the solution. This is the final ingredient needed to improve (BA3), and the estimate ultimately follows from (BA1).

\begin{lem}[Morawetz estimate]
Let $N\geq 10$. Then
\begin{equation}\label{lem: Morawetz BA3 BA1}
	\begin{aligned}
\mathcal{M}^{\frac 12}_{N_{\mathrm{lo}}}  \le C_1
(\ln\varepsilon^{-1})^{\frac12} \left[ E_N^{\frac{1}{2}}(t) + D_N^{\frac{1}{2}}(t) + G_N^{\frac{1}{2}}(t)\right],
\end{aligned}			
\end{equation}	
where $\mathcal{M}_{k}$ is defined in \eqref{def of Mk}, and $C_1$ is independent of $M, T,\mu$, and $\nu$. Moreover, under (BA1)-(BA3), if $M$ is sufficiently large and $\delta$ is sufficiently small, then
\begin{equation} \label{M BA1-3 2}
\begin{aligned}
\mathcal{M}^{\frac{1}{2}}_{N_{\mathrm{lo}}} \le  \frac{M^3}{8} \delta \varepsilon^{\frac{1}{2}} (\ln\varepsilon^{-1})^{-\frac12} .
\end{aligned}
\end{equation}

\end{lem}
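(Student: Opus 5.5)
We apply Lemma \ref{lem:Morawetz} with $\tilde\tau=\Gamma^k\tau$, $\tilde u=\Gamma^k u$, sum over $|k|\le N_{\mathrm{lo}}$, and bound each term on the right-hand side of \eqref{eq:Morawetz-final} by $E_N+D_N+G_N$. The left-hand side of \eqref{eq:Morawetz-final} controls $\mathcal M_{N_{\mathrm{lo}}}$. First, the $\langle r\rangle^{-1}|\tilde n|^2$ term controls $\|\langle r\rangle^{-1/2}\nabla\Gamma^{\le N_{\mathrm{lo}}}u_{cf}\|^2$. To see this, write $\tilde u_{cf}=\nabla\Delta^{-1}\tilde n$ and use a weighted $L^2$ bound for the Riesz transforms, since $\langle r\rangle^{-1}$ is an $A_2$ weight. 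Alternatively, the $u_{cf}$ gradient can be recovered from $\nabla\Gamma^{\le N_{\mathrm{lo}}+1}$ applied to $\tilde n$ terms at a cost of one derivative, which is harmless because $N_{\mathrm{lo}}+2\le N-1$.

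The first term on the right of \eqref{eq:Morawetz-final} carries the factor $|\ln\varepsilon^{-1}|\,\|\Gamma^{\le5}(\tau,u)\|_{L^\infty_tL^2_x}$. By (BA1) this factor is at most $M\delta\varepsilon^{1/2}\ll1$, so the term is absorbed into the left side.

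The remaining terms are handled one by one.
\begin{itemize}
\item The viscous term $|\ln\varepsilon^{-1}|\nu\int(|\nabla\tilde\tau|^2+|\Delta\tilde n|^2)$ uses at most $N_{\mathrm{lo}}+3\le N-1$ derivatives. It is therefore bounded by $|\ln\varepsilon^{-1}|(G_N+D_N)$.
\item The boundary term $|\ln\varepsilon^{-1}|\sup_t\|\nabla\tilde\tau\|\|\nabla\tilde u_{cf}\|$ is at most $|\ln\varepsilon^{-1}|E_N$.
\item The exterior term $\varepsilon\int(|\nabla\tilde\tau|^2+|\nabla\tilde u_{cf}|^2)$ is at most $G_N+D_N$, since $\varepsilon\le\nu$.
\end{itemize}

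The main obstacle is the nonlinear source term
\[
|\ln\varepsilon^{-1}|\int_0^T\!\!\int\bigl|\tilde n(\partial_rF[\tilde\tau]+r^{-1}F[\tilde\tau])+(\tilde\tau_r+r^{-1}\tilde\tau)F[\tilde n]\bigr|\,dxdt .
\]
Since $k\le N_{\mathrm{lo}}$, all factors are at low order. I would estimate $F[\tilde\tau]$ and $F[\tilde n]$ by Lemma \ref{lem:L2omega-product}, putting one factor in $L^2_tL^\infty_rL^2_\omega$ and the other in $L^2_tL^2_x$. The $r^{-1}$ weights are removed by Hardy's inequality \eqref{hardy}. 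This gives a bound of the form
\[
|\ln\varepsilon^{-1}|\,E_N^{1/2}\,\varepsilon^{1/2}A_{N_*}^{1/2}\,\varepsilon^{-1/2}\bigl(D_N^{1/2}+G_N^{1/2}\bigr)\lesssim |\ln\varepsilon^{-1}|\,A_{N_*}^{1/2}(E_N+D_N+G_N).
\]
The viscous parts of $F[\tilde n]$ carry a factor $\nu$ or $\mu$, and are handled in the same way as in Lemma \ref{lem:density}. Where $\nabla^2$ of the high factor exceeds $N-1$ derivatives on $\tau$, I would use $\tilde n$ and the $\tau$-equation to trade $\nabla\tau$ for $u$. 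This is available because $N_{\mathrm{lo}}+3\le N$ for $N\ge10$.

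Collecting everything, I get
\[
\mathcal M_{N_{\mathrm{lo}}}\le C\ln\varepsilon^{-1}\bigl(1+A_{N_*}^{1/2}\bigr)(E_N+D_N+G_N).
\]
By \eqref{Gamma low Gamma hi}, $A_{N_*}^{1/2}\le 4CM^3\delta\le1$, which yields \eqref{lem: Morawetz BA3 BA1}.

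Finally I insert the closed bounds \eqref{EN DN} and \eqref{GN}. These give $E_N^{1/2}+D_N^{1/2}+G_N^{1/2}\lesssim (1+M^{3/2})\delta\varepsilon^{1/2}(\ln\varepsilon^{-1})^{-1}$, so
\[
\mathcal M^{1/2}_{N_{\mathrm{lo}}}\le C_1C'M^{3/2}\delta\,\varepsilon^{1/2}(\ln\varepsilon^{-1})^{-1/2}.
\]
This is at most $\frac{M^3}{8}\delta\varepsilon^{1/2}(\ln\varepsilon^{-1})^{-1/2}$ once $M^{3/2}\ge 8C_1C'$, which gives \eqref{M BA1-3 2} and improves (BA3).
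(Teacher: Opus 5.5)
Your proposal is correct and follows the paper's proof essentially step by step. You absorb the $|\ln\varepsilon^{-1}|\,\|\Gamma^{\le5}(\tau,u)\|_{L^\infty_tL^2_x}$ term via (BA1), bound the viscous, boundary and exterior terms by $|\ln\varepsilon^{-1}|(E_N+D_N+G_N)$, and bound the source term by $|\ln\varepsilon^{-1}|A_{N_*}^{1/2}(E_N+D_N+G_N)$ with $A_{N_*}^{1/2}\ll1$. Two small differences are both fine: your weighted Riesz-transform argument ($\langle r\rangle^{-1}\in A_2(\mathbb R^3)$) justifies the passage from $\langle r\rangle^{-1}|\tilde n|^2$ to $\langle r\rangle^{-1}|\nabla\Gamma^k u_{cf}|^2$, which the paper makes silently, and at the end you use \eqref{EN DN} and \eqref{GN} instead of (BA1), giving $M^{3/2}$ rather than $M^2$.

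One caveat is shared with the paper. Your derivative-count inequalities fail at $N=10$: $N_{\mathrm{lo}}+3\le N-1$ needs $N\ge 15$, and even $N_{\mathrm{lo}}+2\le N$, needed for $\nu\|\Delta\tilde n\|^2\lesssim D_N$, needs $N\ge 11$. So the argument requires $N$ sufficiently large. For such $N$ your fallback of trading $\nabla\tau$ for $u$ via the $\tau$-equation is never needed, which is just as well, since that equation does not express $\nabla\tau$ in terms of $u$.
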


\begin{proof}
Note that (BA1) implies
\[
\|\Gamma^{\le5}(\tau,u)\|_{L^2_x}
\,|\ln\varepsilon^{-1}|
\ll1,~~~for~0<t<T.
\]
Therefore, for $|k| \le N_{\mathrm{lo}}$, it follows from the Morawetz estimate \eqref{eq:Morawetz-final} that
\begin{equation} \label{Morawetz EN}
\begin{aligned}
& \quad
\int_0^T \int_{ \mathbb R^3}
\left(
\frac{|\nabla\Gamma^{k }\tau|^2+|\nabla\Gamma^{k} u_{cf}|^2}
{\langle r \rangle}
+
\frac{|\Gamma^{k}\tau|^2}
{\langle r \rangle^2 r}
\right) dxdt
\\
&
\lesssim 
|\ln\varepsilon^{-1}|
\,\nu \int_0^T
\int_{ \mathbb R^3}
\left(
|\nabla \Gamma^{k}\tau|^2
+
|\Delta \Gamma^{k} n|^2
\right) dxdt \\
& \quad + |\ln\varepsilon^{-1}| \sup\limits_{t\in [0,T]}
\|\nabla \Gamma^{k} \tau\|_{L_x^2}
\|\nabla \Gamma^{k} u_{cf}\|_{L_x^2} \\
& \quad
+  \varepsilon \int_0^T
\int_{ \mathbb R^3}
\left(
|\nabla \Gamma^{k} \tau|^2
+
|\nabla \Gamma^{k} u_{cf}|^2
\right)dxdt \\
& \quad
+
|\ln\varepsilon^{-1}| \int_0^T \int_{ \mathbb R^3} \Big(\left|\tilde n\,(\partial_rF[\tilde\tau]+r^{-1}F[\tilde\tau])\right|
+
\left|(\tilde\tau_r+r^{-1}\tilde\tau)
F[\tilde n]\right|
\Big) dxdt \\
& \lesssim |\ln\varepsilon^{-1}| \left( E_N(T) + D_N(T) + G_N(T) + \mathcal N_{R }^{right} (T)\right) .
\end{aligned}
\end{equation}

For $\mathcal N_{R}^{right} (T)$ in \eqref{def of N right}, we use \eqref{def F tau u}, \eqref{def F n m}, Lemma \ref{lem:L2omega-product}, \eqref{hardy}, the bound $|\partial_r f | \le |\nabla f| $, and the Helmholtz estimate $\|\tilde n\|_{L^2_x}
\lesssim
\|\nabla\tilde u_{cf}\|_{L^2_x}$ to get
\begin{equation}
\begin{aligned}
\mathcal N_{R}^{right} (T)
& \lesssim \left( \|\Gamma^{N_{\mathrm{lo}}+1} u_{cf}  \|_{L^\infty(0,T;L^2_x)} +  \|\Gamma^{N_{\mathrm{lo}}+2}  \tau \|_{L^\infty(0,T;L^2_x)}   \right) \\
& \qquad \cdot \left\| \sum\limits_{\substack{|c|+|d| = N_{\mathrm{lo}} \\ |c| \geq 1}} \nabla^{\le 1}  \Gamma^{c} (\tau,u) \nabla   \nabla^{\le 1}  \Gamma^{d} (\tau,u) \right\|_{L_t^1(0,T; L_x^2)} \\
& \quad + \left\|\Gamma^{N_{\mathrm{lo}}+2}  \tau \right\|_{L^\infty(0,T;L^2_x)} \left\|  \Gamma^{\le  [N/4]+ 4} (\tau , u) \right\|_{L_t^2 (0,T;L_r^\infty L_\omega^2)}  \\
& \qquad \cdot \left(\| \nabla \Gamma^{\le  N_{\mathrm{lo}}} \tau \|_{L^2(0,T; L^2_x)} + \| \nabla \Gamma^{\le  N_{\mathrm{lo}}+1} u \|_{L^2(0,T; L^2_x)} \right) \\
& \lesssim A_{N_*}^{1/2} (T) \left( E_N(T) + D_N(T) + G_N(T)  \right).
\end{aligned}
\end{equation}
By \eqref{Gamma low Gamma hi}, \eqref{take of delta} and \eqref{take of M}, we have
\begin{equation} \label{estimate of N right}
\begin{aligned}
A_{N_*}^{1/2} (T) \ll 1.
\end{aligned}
\end{equation}
Therefore, by \eqref{Morawetz EN}, \eqref{estimate of N right} and (BA1), we obtain \eqref{lem: Morawetz BA3 BA1}. Furthermore, by (BA1), we have
\begin{equation}
	\begin{aligned}
\mathcal{M}^{\frac 12}_{N_{\mathrm{lo}}}  \le 2C_1 M^2 \delta \varepsilon^{\frac12}
(\ln\varepsilon^{-1})^{\frac12}.
\end{aligned}			
\end{equation}	
Finally, \eqref{M BA1-3 2} follows by taking $M=\max \{ 16C_1,  8C^{\frac{3}{2}}\}$ and choosing $\delta$ as in \eqref{take of delta}. Thus, each bound in (BA1)-(BA3) improves from $M$ to $M/2$, which closes the bootstrap argument.

\end{proof}

\section*{Acknowledgement}

N.-A Lai was partially supported by NSFC (No.12271487, W2521007). Y. Zhou was partially supported by NSFC (No. 12571231 and 12171097).

\bibliographystyle{plain}

\end{document}